\newtheorem{theorem}{Theorem}
\newtheorem{lemma}[theorem]{Lemma}
\newtheorem{proposition}[theorem]{Proposition}
\newtheorem{corollary}[theorem]{Corollary}
\theoremstyle{definition}
\newtheorem*{ackn}{Acknowledgments}
\newtheorem*{definition}{Definition}
\newtheorem*{remark}{Remark}
\begin{document}

\title[Strongly symmetric homeomorphisms]{Strongly symmetric homeomorphisms on the real line with uniform continuity} 

\author[H. Wei]{Huaying Wei} 
\address{Department of Mathematics and Statistics, Jiangsu Normal University \endgraf Xuzhou 221116, PR China} 
\email{hywei@jsnu.edu.cn} 

\author[K. Matsuzaki]{Katsuhiko Matsuzaki}
\address{Department of Mathematics, School of Education, Waseda University \endgraf
Shinjuku, Tokyo 169-8050, Japan}
\email{matsuzak@waseda.jp}

\makeatletter
\@namedef{subjclassname@2020}{%
\textup{2020} Mathematics Subject Classification}
\makeatother
\subjclass[2020]{Primary 30C62, 42A45; Secondary 30H35, 26A46, 37E10}
\keywords{quasiconformal, asymptotically conformal, strongly quasisymmetric, symmetric homeomorphism, BMO function, VMO function, Carleson measure, $A_\infty$-weight, barycentric extension, Beurling--Ahlfors extension}
\thanks{Research supported by 
Japan Society for the Promotion of Science (KAKENHI 18H01125 and 21F20027).}

\begin{abstract}
We investigate strongly symmetric homeo\-morphisms of the real line
which appear in harmonic analysis aspects of quasiconformal Teichm\"uller theory.
An element in this class can be characterized by
a property that it can be extended quasiconformally to the upper half-plane
so that its complex dilatation induces a vanishing Carleson measure.
However, differently from the case on the unit circle, strongly symmetric homeomorphisms on the real line
are not preserved under either the composition or the inversion.
In this paper, we present the difference and the relation between these two cases.
In particular, 
we show that if uniform continuity is assumed for strongly symmetric homeo\-morphisms of the real line, then
they are preserved by those operations. We also show that the barycentric extension of  
uniformly continuous one induces a vanishing Carleson measure and so do the composition and the inverse of those 
quasiconformal homeomorphisms of the upper half-plane.
\end{abstract}
 
\maketitle

\section{Introduction and statement of the main results}

The universal Teichm\"uller space and its subspaces are regarded as the spaces consisting of quasiconformal mappings 
on the complex plane. By introducing various particular properties to these mappings
from view points of complex analysis and harmonic analysis,
we can study those concepts through such subspaces reflecting their properties.
For instance, studies on Teichm\"uller spaces of integrable complex dilatations with Weil--Petersson metrics are in 
Cui \cite{Cu}, Takhtajan and Teo \cite{TT}, and Shen \cite{Sh19}, 
those of BMO and VMO functions are in Astala and Zinsmeister \cite{AZ} and Shen and Wei \cite{SW}, and those of
$C^{1+\alpha}$-diffeomorphisms are in \cite{Mat}.

In the conformally invariant formulation, Teichm\"uller spaces defined on the upper half-plane $\mathbb U$ are  
the same as those defined on the unit disk $\mathbb D$.
However, if we consider subspaces of the universal Teichm\"uller space 
by imposing certain conditions on quasiconformal and quasisymmetric mappings,
the theory can differ greatly depending on whether 
the conditions are placed on the compact set (the unit circle $\mathbb S$) or on the non-compact set (the real line $\mathbb R$). 

In this paper, we study the class of strongly symmetric homeomorphisms in the non-compact setting. 
A sense-preserving homeomorphism $h$ of $\mathbb R$ is called strongly symmetric if
$h$ is locally absolutely continuous, 
$h'$ is an $A_\infty$-weight,  and $\log h'$ is a VMO function.
We denote the set of all strongly symmetric homeomorphisms on $\mathbb R$ by
${\rm SS}(\mathbb R)$. The set ${\rm SS}(\mathbb S)$ of those on $\mathbb S$, which defines the original VMO Teichm\"uller space as in \cite{SW}, 
is a natural counterpart to $\rm SS(\mathbb{R})$.

The situation on $\mathbb R$ is more complicated than that on $\mathbb S$ because one has 
to worry about behavior at $\infty$. 
Typically, we have found a phenomenon that $\rm SS(\mathbb{R})$ does not constitute a group
by the composition of mappings in \cite{WM19} whereas
$\rm SS(\mathbb{S})$ is a group.
This causes a trouble in the theory of Teichm\"uller spaces. The VMO Teichm\"uller space $T_v(\mathbb R)$
is defined as the set of all equivalence classes of ${\rm SS}(\mathbb R)$ by affine transformations, 
and its complex analytic structure is studied in
\cite{Sh19} and \cite{WM20}. On the contrary, homogeneity of Teichm\"uller space is important to consider the group of
automorphisms of the Teichm\"uller space and also to introduce an invariant metric with respect to the analytic structure;
$T_v(\mathbb R)$ lacks this nature.

In this paper, we consider a condition under which ${\rm SS}(\mathbb R)$ is preserved by the composition, and
prove the following result.

\begin{theorem}\label{thm2}
If $g, h \in {\rm SS}(\mathbb{R})$ and $h^{-1}$ is uniformly continuous on $\mathbb{R}$, 
then $g \circ h^{-1} \in {\rm SS}(\mathbb{R})$. 
\end{theorem}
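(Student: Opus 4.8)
The plan is to verify the three defining properties of ${\rm SS}(\mathbb{R})$ directly for $f:=g\circ h^{-1}$: local absolute continuity, $f'\in A_\infty$, and $\log f'\in\mathrm{VMO}(\mathbb{R})$. Since $g,h\in{\rm SS}(\mathbb{R})$, both $u:=\log g'$ and $v:=\log h'$ lie in $\mathrm{VMO}(\mathbb{R})$ and $g',h'$ are $A_\infty$-weights; in particular $h'>0$ a.e.\ (as $\log h'\in L^1_{\mathrm{loc}}$), so $h^{-1}$ is locally absolutely continuous with $(h^{-1})'=1/(h'\circ h^{-1})$, and then $f$ is locally absolutely continuous, being the composition of an absolutely continuous map with a monotone absolutely continuous one. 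By the chain rule,
\[
f'=(g'/h')\circ h^{-1},\qquad\text{hence}\qquad \log f'=\varphi\circ h^{-1}\ \text{ with }\ \varphi:=u-v\in\mathrm{VMO}(\mathbb{R}).
\]
Thus everything reduces to: (a) $f'\in A_\infty$; (b) $\varphi\circ h^{-1}\in\mathrm{VMO}(\mathbb{R})$.

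Claim (a) is the part that is insensitive to behaviour at $\infty$. A change of variables gives $\int_{h(E)}f'\,dx=|g(E)|$ for every Borel set $E\subseteq J:=h^{-1}(I)$, so the $A_\infty$-inequality for $f'$ on an interval $I$ reduces to the $A_\infty$-inequalities for $g'$ on $J$ and for $(h^{-1})'$ on $I$; and $(h^{-1})'$ is again an $A_\infty$-weight because the $A_\infty$-condition is symmetric in the doubling measures $dx$ and $h'\,dx$ (equivalently, the strongly quasisymmetric class is closed under inversion). No use of uniform continuity is made here.

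The new content is (b). Fix an interval $I$, put $J:=h^{-1}(I)$, write $\langle\cdot\rangle_E:=\frac1{|E|}\int_E$, and change variables by $x=h(y)$:
\[
\frac1{|I|}\int_I\bigl|\varphi(h^{-1}(x))-\langle\varphi\rangle_J\bigr|\,dx=\frac{|J|}{|h(J)|}\,\bigl\langle|\varphi-\langle\varphi\rangle_J|\,h'\bigr\rangle_J=\frac{\bigl\langle|\varphi-\langle\varphi\rangle_J|\,h'\bigr\rangle_J}{\langle h'\rangle_J}.
\]
Let $q>1$ be a reverse Hölder exponent for the $A_\infty$-weight $h'$, so $\langle (h')^q\rangle_J^{1/q}\le C\langle h'\rangle_J$ with $C$ independent of $J$, and let $q'$ be its conjugate. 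Hölder's inequality bounds the right-hand side by $C\,\langle|\varphi-\langle\varphi\rangle_J|^{q'}\rangle_J^{1/q'}$, whence
\[
\mathrm{osc}(\varphi\circ h^{-1},I)\ \le\ 2C\,\langle|\varphi-\langle\varphi\rangle_J|^{q'}\rangle_J^{1/q'}.
\]
The local form of the John--Nirenberg inequality bounds the last expression by $C'\sup_{|I'|\le|J|}\mathrm{osc}(\varphi,I')$; taking the supremum over $I$ of fixed maximal length already re-proves that composition with $h^{-1}$ preserves $\mathrm{BMO}(\mathbb{R})$, and, more to the point, it shows $\mathrm{osc}(\varphi\circ h^{-1},I)$ is small as soon as $|J|=|h^{-1}(I)|$ is small, because $\varphi\in\mathrm{VMO}(\mathbb{R})$ means precisely that its mean oscillation over intervals of length $\le\eta$ tends to $0$ as $\eta\to0$.

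It remains to pass from ``$|I|$ small'' to ``$|h^{-1}(I)|$ small'', and this is the single place where the hypothesis enters: since $J$ is the interval whose endpoints are the $h^{-1}$-images of the endpoints of $I$, uniform continuity of $h^{-1}$ provides a modulus $\omega$ with $\omega(0^+)=0$ such that $|h^{-1}(I)|\le\omega(|I|)$ for every interval $I$. Combining this with the previous estimate,
\[
\sup_{|I|\le\delta}\mathrm{osc}(\varphi\circ h^{-1},I)\ \le\ 2CC'\sup_{|I'|\le\omega(\delta)}\mathrm{osc}(\varphi,I'),
\]
which tends to $0$ as $\delta\to0$; hence $\log f'\in\mathrm{VMO}(\mathbb{R})$ and $f\in{\rm SS}(\mathbb{R})$. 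The only genuine obstacle is this last reduction: without uniform continuity, arbitrarily short intervals can be pulled back by $h^{-1}$ to intervals of fixed length lying where $\varphi$ has non-negligible mean oscillation — exactly the mechanism behind the known failure of ${\rm SS}(\mathbb{R})$ to be closed under composition — and the hypothesis on $h^{-1}$ is tailored to exclude it.
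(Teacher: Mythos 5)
Your proof is correct, and its overall strategy coincides with the paper's: the same decomposition $\log(g\circ h^{-1})'=(\log g'-\log h')\circ h^{-1}$, the same reduction to the statement that composition with a uniformly continuous strongly quasisymmetric map preserves $\mathrm{VMO}(\mathbb{R})$, and uniform continuity enters at exactly the same point, to pass from $|I|\to 0$ to $|h^{-1}(I)|\to 0$. The only divergence is in how you establish the key local estimate $\mathrm{osc}(u\circ h^{-1},I)\lesssim \Vert u\Vert_{\mathrm{BMO}(h^{-1}(I))}$: the paper (following Anderson--Becker--Lesley) runs a layer-cake argument, combining the John--Nirenberg decay of the level sets $E_t\subset h^{-1}(I)$ with the $A_\infty$-inequality for $h'$ to control $|h(E_t)|/|I|$, whereas you change variables and pair the oscillation against the weight $(h^{-1})'$ via H\"older with a reverse-H\"older exponent, invoking John--Nirenberg only to upgrade the $L^1$ mean oscillation to $L^{q'}$. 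These are the two standard dual routes to Jones's theorem on BMO-preserving homeomorphisms and yield the same quantitative conclusion, so nothing essential is gained or lost; your version perhaps makes more transparent that the constant depends only on the $A_\infty$/reverse-H\"older data of $h'$. Your treatment of the $A_\infty$ and absolute-continuity parts likewise matches the paper's appeal to the group property of ${\rm SQS}(\mathbb{R})$.
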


Let ${\rm SS}_{\rm uc}(\mathbb R)$
denote a subset of ${\rm SS}(\mathbb R)$ consisting of all elements $h$ such that both $h$ and $h^{-1}$ 
are uniformly continuous.
Then, ${\rm SS}_{\rm uc}(\mathbb R)$ becomes a group by Theorem \ref{thm2}. 
Every element $h \in {\rm SS}_{\rm uc}(\mathbb R)$ acts on $T_v(\mathbb R)$ as an automorphism
that maps the equivalence class of $h$ to the origin of $T_v(\mathbb R)$ (see Section 3). 
Hence, ${\rm SS}_{\rm uc}(\mathbb R)$ is
embedded into the group ${\rm Aut}(T_v(\mathbb R))$ of biholomorphic automorphisms of $T_v(\mathbb R)$, 
which plays the role of 
the Teichm\"uller modular group of $T_v(\mathbb R)$.
In order to see that the action of $h$ is biholomorphic, the following investigation on
quasiconformal extension is necessary.

The complex dilatation $\mu_F$ of a quasiconformal homeomorphism $F$ is defined by
$\mu_F = F_{\bar z}/F_z$. It satisfies $\Vert \mu_F \Vert_\infty<1$.
Let $\mathcal M(\mathbb U)$ be the set of all measurable functions $\mu$ on $\mathbb U$ such that
$\Vert \mu \Vert_\infty<1$ and $|\mu(z)|^2dxdy/y$ is a Carleson measure on $\mathbb U$. In addition, if $|\mu(z)|^2dxdy/y$ is a 
vanishing Carleson measure, then the subset of all such $\mu$ is denoted by $\mathcal M_0(\mathbb U)$.

The following chain rule of complex dilatations is obtained by refinement of the argument in Cui and Zinsmeister 
\cite[Lemma 10]{CZ} who showed the first statement in the case that $G$ is the identity map.

\begin{theorem}\label{biLipschitz}
Let $G$ and $H$ be quasiconformal homeomorphisms of $\mathbb U$ onto itself, and
assume that $H$ is bi-Lipschitz with respect to the hyperbolic metric on $\mathbb U$. 
Then, $(1)$ $\mu_{G \circ H^{-1}}$ belongs to $\mathcal M(\mathbb U)$ if $\mu_G,\, \mu_H \in \mathcal M(\mathbb U)$; 
$(2)$ $\mu_{G \circ H^{-1}}$ belongs to $\mathcal M_0(\mathbb U)$ if $\mu_G,\, \mu_H \in \mathcal M_0(\mathbb U)$ 
and in addition if the boundary extension $h^{-1}$ of $H^{-1}$ to $\mathbb R$ is uniformly continuous.
\end{theorem}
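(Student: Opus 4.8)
The plan is to reduce the composition formula to a pointwise estimate on the complex dilatation of $G \circ H^{-1}$ and then control the resulting Carleson (resp.\ vanishing Carleson) measure by those of $\mu_G$ and $\mu_H$, using the bi-Lipschitz property of $H$ to transfer Carleson boxes. First I would record the chain rule for complex dilatations: writing $w = H(z)$, one has
\[
\mu_{G\circ H^{-1}}(w) \;=\; \frac{\mu_G(z) - \mu_H(z)}{1 - \overline{\mu_H(z)}\,\mu_G(z)} \cdot \frac{\overline{H_z(z)}}{H_z(z)},
\]
so that $|\mu_{G \circ H^{-1}}(w)| = \left|\dfrac{\mu_G(z)-\mu_H(z)}{1 - \overline{\mu_H(z)}\mu_G(z)}\right|$, and in particular, since $\|\mu_G\|_\infty, \|\mu_H\|_\infty < 1$, there is a constant $C$ depending only on these sup-norms with
\[
|\mu_{G\circ H^{-1}}(H(z))|^2 \;\le\; C\bigl(|\mu_G(z)|^2 + |\mu_H(z)|^2\bigr).
\]
The key change-of-variables step is then to compare $\int\!\!\int_{Q} |\mu_{G\circ H^{-1}}(w)|^2 \, \dfrac{du\,dv}{v}$ over a Carleson box $Q = Q(I)$ in the $w$-plane with an integral in the $z$-plane. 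Since $H$ is bi-Lipschitz for the hyperbolic metric, $H^{-1}(Q)$ is contained in a hyperbolically bounded neighborhood of a Carleson box, hence in a bounded union of Carleson boxes $Q(I_j)$ in the $z$-plane whose shadows $I_j$ have total length comparable to $|I|$; moreover the hyperbolic-density factor $y^{-1}\,dx\,dy$ is quasi-invariant under $H$. Combining this with the pointwise bound gives
\[
\frac{1}{|I|}\int\!\!\int_{Q(I)} |\mu_{G\circ H^{-1}}|^2\,\frac{du\,dv}{v} \;\le\; \frac{C'}{|I|}\sum_j \int\!\!\int_{Q(I_j)} \bigl(|\mu_G|^2 + |\mu_H|^2\bigr)\,\frac{dx\,dy}{y},
\]
and taking the supremum over all $I$ yields statement $(1)$.

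For statement $(2)$, the vanishing condition means the box averages above tend to $0$ as $|I| \to 0$ and also as the box escapes to $\infty$ along $\mathbb{R}$; the former passes through the same estimate immediately because small boxes $Q(I)$ have preimages covered by boxes $Q(I_j)$ with $|I_j|$ uniformly small — and here is exactly where uniform continuity of $h^{-1}$ enters: it guarantees that $|I| \to 0$ forces $\max_j |I_j| \to 0$ (bi-Lipschitzness alone only gives $\sum_j |I_j| \asymp |I|$, which would not suffice if the box were subdivided into many pieces each of size bounded below, but uniform continuity of $h^{-1}$ controls the diameter of the shadow of $H^{-1}(Q(I))$ directly in terms of $|I|$). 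For the behavior at $\infty$, one notes that a box $Q(I)$ with center of $I$ going to $\pm\infty$ has preimage with shadow also escaping to $\infty$, again by properties of the boundary map, so the vanishing of the Carleson norms of $\mu_G$ and $\mu_H$ at $\infty$ transfers. Assembling the two regimes gives that $|\mu_{G\circ H^{-1}}|^2\,du\,dv/v$ is a vanishing Carleson measure.

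I expect the main obstacle to be the geometric bookkeeping in the change-of-variables step: making precise that the bi-Lipschitz image (resp.\ preimage) of a Carleson box is efficiently covered by boundedly many Carleson boxes with comparable total shadow length, with constants uniform in the box, and — for part $(2)$ — quantifying how uniform continuity of $h^{-1}$ forces the individual shadow lengths $|I_j|$ to be uniformly small as $|I| \to 0$. A secondary technical point is handling the Jacobian/hyperbolic-density distortion of $H$ carefully enough that the comparison of the two measures is genuinely two-sided up to multiplicative constants; this is where the refinement over Cui–Zinsmeister's identity-map case \cite[Lemma 10]{CZ} is needed, since there one does not have to move the box at all.
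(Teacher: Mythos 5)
Your setup is right as far as it goes: the chain rule, the pointwise bound by $|\mu_G|^2+|\mu_H|^2$, the containment of $H^{-1}(Q_I)$ in a dilated Carleson box over $J=h^{-1}(I)$ (via stability of quasi-geodesics), and the role of uniform continuity of $h^{-1}$ in part (2) (forcing $|J|\to 0$ uniformly as $|I|\to 0$) all match the actual argument. But the change-of-variables step, which you defer as ``bookkeeping,'' contains the entire difficulty, and both claims you use to dispatch it are false. First, the shadows of the covering boxes do \emph{not} have total length comparable to $|I|$: the boundary map $h$ of a hyperbolically bi-Lipschitz $H$ is only (strongly) quasisymmetric, not Euclidean bi-Lipschitz, so $|h^{-1}(I)|/|I|$ is unbounded above and below (think of the radial stretch with boundary values $x\mapsto x|x|^{\alpha-1}$). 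Second, $y^{-1}\,dx\,dy$ is \emph{not} quasi-invariant under $H$; only the hyperbolic area element $y^{-2}\,dx\,dy$ is. Pulling back $|\mu_{G\circ H^{-1}}(\zeta)|^2\,d\xi\,d\eta/\eta$ by $\zeta=H(z)$ produces, after using $J_H(z)\asymp(\mathrm{Im}\,H(z)/\mathrm{Im}\,z)^2$, the integrand $|\mu_G-\mu_H|^2\,\widetilde\omega(z)\,dx\,dy/y$ with the genuinely unbounded weight $\widetilde\omega(z)=|h(I_z)|/|I_z|$, $I_z=[x-y,x+y]$. Your two errors formally cancel only if $\widetilde\omega$ could be replaced by its average $|I|/|J|$ over the box, which is exactly what cannot be done without further input.

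Closing this gap is where the harmonic analysis lives, and it is the content of the refinement of Cui--Zinsmeister: one must treat $\iint_{Q_{\alpha J}}\widetilde\omega\,d\lambda_{\mu_G-\mu_H}$ by the Carleson embedding theorem (legitimate because $\lambda_{\mu_G-\mu_H}$ is a Carleson measure), dominate the non-tangential maximal function of $\widetilde\omega\,1_{Q_{\alpha J}}$ by the Hardy--Littlewood maximal function of $h'\,1_{3\alpha J}$, and then invoke the reverse H\"older inequality for the $A_\infty$-weight $h'$ (available since $h\in{\rm SQS}(\mathbb R)$ by the extension theorem) together with the $L^p$ maximal inequality to get the bound $\lesssim\Vert\lambda_{\mu_G-\mu_H}1_{Q_{\alpha J}}\Vert_c\,|h(3\alpha J)|\lesssim\Vert\lambda_{\mu_G-\mu_H}1_{Q_{\alpha J}}\Vert_c\,|I|$. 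Part (1) then follows, and part (2) follows because the localized Carleson norm $\Vert\lambda_{\mu_G-\mu_H}1_{Q_{\alpha J}}\Vert_c$ tends to $0$ as $|J|\to 0$, with uniform continuity of $h^{-1}$ converting $|I|\to0$ into $|J|\to0$ exactly as you said. (Minor point: the definition of vanishing Carleson measure here only involves the uniform limit as $|I|\to 0$; no separate condition at infinity is imposed, so that part of your discussion is not needed.) Without the $A_\infty$/maximal-function machinery your estimate does not close.
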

 
We remark that if the uniform continuity is dropped then statement (2) is no longer valid due to the lack of group structure of $\rm SS(\mathbb R)$. The relation between $\mathcal M_0(\mathbb U)$ and $\rm SS(\mathbb R)$ will be given in Proposition \ref{SSext}. 

By statement (2) of Theorem \ref{biLipschitz}, we can define a biholomorphic automorphism of $\mathcal M_0(\mathbb U)$
induced by some quasiconformal extension of $h \in {\rm SS}_{\rm uc}(\mathbb R)$. 
Suppose that $h$ extends to a bi-Lipschitz quasiconformal
homeomorphism $H$ of $\mathbb U$ such that $\mu_H \in \mathcal M_0(\mathbb U)$, which is known to be always the case
independently of Theorem \ref{DEonR} below (see Proposition \ref{step}).
Then, by representing any element of $\mathcal M_0(\mathbb U)$ by $\mu_G$
for a quasiconformal homeomorphism $G$ of $\mathbb U$, we have the right translation $r_H:\mathcal M_0(\mathbb U) \to \mathcal M_0(\mathbb U)$
by the correspondence $\mu_G \mapsto \mu_{G \circ H^{-1}}$. Standard arguments show that $r_H$ is biholomorphic, and
moreover, this action is projected down to $T_v(\mathbb R)$ under the Teichm\"uller projection to induce a
biholomorphic automorphism $R_h:T_v(\mathbb R) \to T_v(\mathbb R)$ well defined by $h \in {\rm SS}_{\rm uc}(\mathbb R)$
(Theorem \ref{action}).

There are several ways to extend quasisymmetric homeomorphisms of $\mathbb S$ and $\mathbb R$
to quasiconformal homeomorphisms. The classical one is due to Beurling and Ahlfors \cite{BA}, and
its variants and modified versions are also introduced by Semmes \cite{Se} and by Fefferman, Kenig and Pipher \cite{FKP}.
Including the barycentric extension introduced by Douady and Earle \cite{DE}, all of them have a property that
the extension map is a bi-Lipschitz diffeomorphism with respect to the hyperbolic metric.
In addition, the conformal naturality of the barycentric extension is useful in the theory of quasiconformal mappings,
in particular, when we consider a M\"obius group action on the Teichm\"uller space.
However, it is so far unknown whether the complex dilatation of the barycentric extension $e(h)$ of a strongly symmetric homeomorphism $h$ 
of $\mathbb R$ induces a vanishing Carleson measure on $\mathbb U$. 
In this paper we prove it does if a strongly symmetric homeomorphism $h$ and its inverse $h^{-1}$ are uniformly continuous on $\mathbb R$,  as stated in the following result.  

\begin{theorem}\label{DEonR}
If $h \in {\rm SS}_{\rm uc}(\mathbb R)$, then $\mu_{e(h)} \in \mathcal{M}_0 (\mathbb U)$. 
\end{theorem}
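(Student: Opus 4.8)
\emph{Sketch of the intended proof.} The plan is to reduce, via the conformal naturality of the barycentric extension, to a single normalized Carleson box, then to prove a pointwise bound for $\mu_{e(h)}$ by a local oscillation of $\log h'$, and finally to invoke the Littlewood--Paley characterization of $\mathrm{VMO}$. For an interval $I=(c-r,c+r)$ set $A_I(w)=rw+c\in\mathrm{M\ddot ob}(\mathbb U)$ and choose the affine $B_I$ so that $h_I:=B_I\circ h\circ A_I$ satisfies $h_I(\pm1)=\pm1$; then $h_I'=(h'\circ A_I)/\langle h'\rangle_I$ with $\langle h'\rangle_I:=\frac1{|I|}\int_I h'$, so that $\log h_I'=\log h'\circ A_I-\log\langle h'\rangle_I$. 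By conformal naturality, $e(h_I)=B_I\circ e(h)\circ A_I$, and since pre- and post-composition with conformal automorphisms of $\mathbb U$ preserves $|\mu|$, a change of variables yields
\[
\frac1{|I|}\iint_{Q_I}|\mu_{e(h)}(z)|^2\,\frac{dx\,dy}{y}=\frac12\iint_{Q}|\mu_{e(h_I)}(w)|^2\,\frac{du\,dv}{v},\qquad Q:=Q_{(-1,1)},
\]
where $Q_J$ denotes the Carleson box over $J$. Thus it suffices to show the right-hand side tends to $0$ as $|I|\to0$, uniformly in $c\in\mathbb R$. Here the quasisymmetry constant of $h_I$ and the $A_\infty$/$\mathrm{VMO}$ data of $\log h_I'$ over bounded intervals are controlled by those of $h$; and, crucially, the uniform continuity of $h$ and of $h^{-1}$ furnishes a uniform (in $c$ and in $|I|\le1$) modulus of continuity for $h_I$ and $h_I^{-1}$.

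The next step is a pointwise estimate. Differentiating the defining equation of the Douady--Earle extension and applying the implicit function theorem, I expect to bound, for $z=x+iy$,
\[
|\mu_{e(h)}(z)|\le C\bigl(\omega_x(y)+\tau_x(y)\bigr),
\]
where $\omega_x(y)$ is the mean oscillation of $\log h'$ over $B(x,y)$ (comparably, $y|\nabla U(z)|$ for the Poisson extension $U$ of $\log h'$) --- this is the term already present in the compact case on $\mathbb S$ --- and $\tau_x(y)$ is a tail contribution from the part of the barycentric integral over $\{t:|t-x|\ge Cy\}$. The passage from "values of $h$" to $\log h'$ uses that $h$ is nearly affine at scale $y$ near $x$ (John--Nirenberg applied to the small oscillation $\omega_x(y)$), while the uniform continuity of $h$ and $h^{-1}$ is what confines $\tau_x(y)$ to be dominated by the modulus of continuity at scale $\asymp y$, so that $\tau_x(y)\to0$ as $y\to0$ uniformly in $x$ and, more to the point, $\int_0^1\tau_x(y)^2\,\frac{dy}{y}$ becomes small, uniformly in $x$, at small scales. (Alternatively, one may read this off from the known $\mathbb S$-version of the statement by transplanting $h$ near $x$ to a circle homeomorphism coinciding with $h$ on $[x-Cy,x+Cy]$ and estimating the error by the same tail.)

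Feeding the pointwise bound into the reduced integral and using the scaling of $\log h_I'$ from the first paragraph gives
\[
\iint_{Q}|\mu_{e(h_I)}|^2\,\frac{du\,dv}{v}\ \lesssim\ \frac1{|I|}\int_I\!\Bigl(\int_0^{|I|}\omega_\xi(\tau)^2\,\frac{d\tau}{\tau}\Bigr)d\xi\ +\ (\text{tail})\ \asymp\ \frac1{|I|}\iint_{Q_I}|\nabla U|^2\,y\,dx\,dy\ +\ (\text{tail}).
\]
Since $\log h'\in\mathrm{VMO}(\mathbb R)$, the Fefferman--Stein/Garnett--Jones characterization gives $\sup_{|I|\le s}\frac1{|I|}\iint_{Q_I}|\nabla U|^2y\,dx\,dy\to0$ as $s\to0$, so the first term vanishes uniformly, and the tail term vanishes by the previous paragraph. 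Hence $\frac1{|I|}\iint_{Q_I}|\mu_{e(h)}|^2\,\frac{dx\,dy}{y}\to0$ as $|I|\to0$, i.e.\ $\mu_{e(h)}\in\mathcal M_0(\mathbb U)$. (That $\mu_{e(h)}$ is a Carleson measure in the first place follows in the same way from the boundedness versions of these estimates together with $h\in{\rm SS}(\mathbb R)$; one may also keep in reserve the alternative route $e(h)=(e(h)\circ H^{-1})\circ H$ with $H$ from Proposition \ref{step} and Theorem \ref{biLipschitz}, although this ultimately requires an estimate of the same type for $e(h)\circ H^{-1}$.)

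The main obstacle is the uniform pointwise estimate of the middle paragraph, and within it the tail term $\tau_x(y)$: this is precisely where the non-compactness of $\mathbb R$ intervenes and where the uniform continuity of \emph{both} $h$ and $h^{-1}$ is indispensable --- $h$ uniformly continuous controls the growth of the normalized maps far from the basepoint while $h^{-1}$ uniformly continuous keeps them from degenerating there --- and dropping either hypothesis should break the estimate, consistent with the Remark following Theorem \ref{biLipschitz}. The supporting technical point is the uniform differentiability/invertibility of the implicitly defined barycentric extension along the whole family $\{h_I\}$, which must be checked with constants depending only on the quasisymmetry and modulus-of-continuity data of $h$.
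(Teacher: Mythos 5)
Your outline is a genuinely different route from the paper's, but as written it has a real gap at its core. The pointwise bound $|\mu_{e(h)}(z)|\le C\bigl(\omega_x(y)+\tau_x(y)\bigr)$ is only announced as something you ``expect''; it is precisely the hard part, and nothing in the sketch indicates how to extract it from the implicit definition of the barycenter. Note that even in the compact case the known proof that $\varphi\in{\rm SS}(\mathbb S)$ implies $\mu_{e(\varphi)}\in\mathcal M_0(\mathbb D)$ does \emph{not} proceed via such a direct pointwise estimate on $\mu_{e(\varphi)}$: one first controls the inverse extension $e^{-1}(\varphi^{-1})$ (Tang--Wei--Shen) and then transfers the conclusion to $e(\varphi)$ by a composition lemma of Astala--Zinsmeister/Semmes type. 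Moreover, your treatment of the tail contains a non sequitur: knowing that $\tau_x(y)\to0$ uniformly as $y\to0$ (which is all the moduli of continuity of $h$ and $h^{-1}$ give you) does not imply that $\int_0^s\tau_x(y)^2\,\frac{dy}{y}$ is small, or even finite --- take $\tau_x(y)=(\log(1/y))^{-1/2}$. You would need a quantitative, $\frac{dy}{y}$-summable decay rate for the tail, which uniform continuity alone does not supply. So both halves of the key estimate are unproven, and the second is supported by an invalid inference.

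The route you ``keep in reserve'' at the end is essentially the paper's proof, but you have oriented the decomposition the wrong way, which is why it appears to require a further estimate for $e(h)\circ H^{-1}$. The correct order is: by Corollary \ref{3item}, $h^{-1}\in{\rm SS}(\mathbb R)$; apply Proposition \ref{step} to $h^{-1}$ to obtain the bi-Lipschitz extension $H=e^{-1}(h)$ of $h^{-1}$ with $\mu_H\in\mathcal M_0(\mathbb U)$; then $e(h)=H^{-1}=\mathrm{id}\circ H^{-1}$, and Theorem \ref{biLipschitz}(2) with $G=\mathrm{id}$ applies because the boundary map of $H^{-1}$ is $h$, which is uniformly continuous. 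This yields $\mu_{e(h)}\in\mathcal M_0(\mathbb U)$ with no additional estimate. I would recommend abandoning the direct pointwise approach (or at least isolating the pointwise bound as a separate, fully proved lemma) and using this composition argument instead.
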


This result is a consequence of Theorem \ref{biLipschitz}, and implies that a biholomorphic automorphism $R_h$ of $T_v(\mathbb R)$ for $h \in {\rm SS}_{\rm uc}(\mathbb R)$
is lifted canonically
to the biholomorphic automorphism $r_H$ of $\mathcal{M}_0 (\mathbb U)$ by the barycentric extension $H=e(h)$.

We end this introduction (Section 1) with showing the organization of the rest of this paper. 
(Section 2): Definitions and review of basic results are given. These are concerning strongly symmetric and quasisymmetric homeomorphisms,
BMO and VMO functions, the Muckenhoupt weights, the Carleson measures, 
the spaces $\mathcal M(\mathbb U)$ and $\mathcal M_0(\mathbb U)$ of Beltrami coefficients, and their Teichm\"uller spaces.
(Section 3): Under the assumption of uniform continuity, the group structure is considered.
The proof of Theorem \ref{thm2} is given.
We also prove a similar result in Theorem \ref{thm1} for symmetric homeomorphisms of $\mathbb R$, 
a vanishing class of quasisymmetric homeomorphisms, since it has the property parallel to Theorem \ref{thm2}. 
(Section 4): The composition of quasiconformal homeomorphisms whose complex dilatations satisfy the Carleson measure condition
is considered. The uniform continuity condition is applied in the case for the vanishing Carleson measure condition.
The proof of Theorem \ref{biLipschitz} is given.
(Section 5): The barycentric extension defined on $\mathbb R$ is considered. The proof of Theorem \ref{DEonR} is given.
We also give another proof of Theorem \ref{thm2} based on Theorem \ref{biLipschitz}.
(Section 6): Comparisons between strongly symmetric homeomorphisms on $\mathbb R$ and those on $\mathbb S$ under the conjugation by the Cayley transformation are addressed.

\section{Preliminaries}

\subsection{Quasisymmetric homeomorphisms}
As background knowledge, the definitions of quasisymmetric homeomorphisms and the universal Teichm\"uller space
are given. Including the concept of quasiconformal mapping, a basic reference of those is \cite{Ah}.

\begin{definition}
An increasing homeomorphism $h$ of the real line $\mathbb{R}$ onto itself is said to be 
{\it quasisymmetric} 
if there exists a constant $M \geq 1$ such that 
$$
\frac{1}{M} \leq \frac{h(x+t)-h(x)}{h(x)-h(x-t)}\leq M
$$
for all $x\in\mathbb{R}$ and $t>0$. 
The least possible value of such $M$ is called the quasisymmetry constant of $h$.
\end{definition}

If we define a measure $m_h$ by $m_h(E)=|h(E)|$ for a measurable subset $E \subset \mathbb R$
with respect to the Lebesgue measure $|\cdot|$, 
then the boundedness of the quasisymmetry quotient of $h$ is equivalent to that $m_h$ is a {\it doubling measure}, i.e.,
there is some constant $M' \geq 1$ such that $m_h(2I) \leq M' m_h(I)$ 
for every bounded closed interval $I=[x-t,x+t]$ and its double
$2I=[x-2t,x+2t]$.

Beurling and Ahlfors \cite{BA} proved the following theorem.

\begin{proposition}\label{BAext}
An increasing homeomorphism $h$ of the real line $\mathbb{R}$ onto itself is
quasisymmetric if and only if there exists some quasiconformal homeomorphism of the upper half-plane $\mathbb{U}$ onto itself that is continuously extendable to the boundary map $h$. 
\end{proposition}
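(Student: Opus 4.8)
The plan is to prove the two implications separately. The ``if'' direction will follow from the standard distortion theory of quasiconformal maps, whereas the ``only if'' direction is the substantive part and amounts to the Beurling--Ahlfors construction together with the verification that it produces a uniformly bounded dilatation.

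For the implication that a quasiconformal extension forces quasisymmetry, suppose $F\colon\mathbb{U}\to\mathbb{U}$ is $K$-quasiconformal with continuous boundary values $h$; since $h\colon\mathbb{R}\to\mathbb{R}$ is an increasing homeomorphism, the boundary homeomorphism of $\overline{\mathbb{U}}$ induced by $F$ fixes $\infty$. Fix $x\in\mathbb{R}$ and $t>0$, and let $Q(x,t)$ be the quadrilateral consisting of $\mathbb{U}$ with marked boundary vertices $x-t,\,x,\,x+t,\,\infty$. An affine change of variable shows $Q(x,t)$ is conformally equivalent to the fixed quadrilateral with vertices $-1,0,1,\infty$, so $\mathrm{mod}\,Q(x,t)$ equals an absolute constant $m_0$. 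The image $F(Q(x,t))$ is the quadrilateral with vertices $h(x-t),\,h(x),\,h(x+t),\,\infty$, which is conformally equivalent to the quadrilateral with vertices $-1,0,\rho,\infty$, where $\rho=\rho(x,t)=\big(h(x+t)-h(x)\big)/\big(h(x)-h(x-t)\big)$; call its modulus $\Phi(\rho)$. The function $\Phi$ is continuous and strictly monotone on $\rho\in(0,\infty)$ with limits $0$ and $\infty$ at the two ends. Since $K$-quasiconformality of $F$ gives $m_0/K\le\Phi(\rho)\le Km_0$, monotonicity of $\Phi$ confines $\rho(x,t)$ to a compact subinterval of $(0,\infty)$ depending only on $K$, which is exactly the quasisymmetry condition for $h$.

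For the converse, assume $h$ is quasisymmetric with constant $M$ and form the Beurling--Ahlfors extension $F=U+iV$ on $\mathbb{U}$,
\[
U(x,y)=\frac{1}{2y}\int_{x-y}^{x+y}h(s)\,ds,\qquad
V(x,y)=\frac{1}{2y}\Big(\int_{x}^{x+y}h(s)\,ds-\int_{x-y}^{x}h(s)\,ds\Big).
\]
I would proceed as follows. \emph{(i)} Because $h$ is continuous, $U$ and $V$ are $C^1$ on $\mathbb{U}$; because $h$ is strictly increasing, $V>0$, so $F$ maps $\mathbb{U}$ into $\mathbb{U}$, and $U(x,y)\to h(x)$, $V(x,y)\to 0$ as $y\to0^+$ locally uniformly, so $F$ extends continuously to $h$ on $\mathbb{R}$; one also checks $F$ is proper. \emph{(ii)} With $\lambda=h(x+y)-h(x)$, $\mu=h(x)-h(x-y)$, $\alpha=h(x)-\tfrac1y\int_{x-y}^{x}h$, $\beta=\tfrac1y\int_{x}^{x+y}h-h(x)$ (so $0<\alpha<\mu$ and $0<\beta<\lambda$), a direct computation gives $2yU_x=\lambda+\mu$, $2yV_x=\lambda-\mu$, $2yU_y=(\lambda-\mu)+(\alpha-\beta)$, $2yV_y=(\lambda+\mu)-(\alpha+\beta)$, and hence $4y^2(U_xV_y-U_yV_x)=2\lambda(\mu-\alpha)+2\mu(\lambda-\beta)>0$. \emph{(iii)} Splitting $[x-y,x]$ and $[x,x+y]$ at their midpoints and invoking the appropriate side of the quasisymmetry inequality at those midpoints yields $\mu-\alpha\ge\mu/(2(M+1))$ and $\lambda-\beta\ge\lambda/(2(M+1))$, so the Jacobian $J=U_xV_y-U_yV_x$ satisfies $4y^2J\ge\tfrac{2}{M+1}\lambda\mu$; on the other hand $M^{-1}\le\lambda/\mu\le M$ forces $4y^2(U_x^2+U_y^2+V_x^2+V_y^2)\le C(M)\lambda\mu$, so the identity $D_F+D_F^{-1}=(U_x^2+U_y^2+V_x^2+V_y^2)/J$, with $D_F=(|F_z|+|F_{\bar z}|)/(|F_z|-|F_{\bar z}|)$, produces a bound on $D_F$ depending only on $M$, whence $\Vert\mu_F\Vert_\infty<1$. \emph{(iv)} Since $J>0$ on $\mathbb{U}$, $F$ is a $C^1$ local homeomorphism; being proper onto the simply connected domain $\mathbb{U}$, it is a homeomorphism $\mathbb{U}\to\mathbb{U}$, and by \emph{(i)} and \emph{(iii)} it is the desired quasiconformal extension of $h$.

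The main obstacle is step \emph{(iii)}, and specifically the lower bound on the Jacobian: a priori the averages $\tfrac1y\int_{x-y}^{x}h$ and $\tfrac1y\int_{x}^{x+y}h$ could be arbitrarily close to $h(x)$, making $\mu-\alpha$ or $\lambda-\beta$ degenerate, so the pointwise inequalities $0<\alpha<\mu$, $0<\beta<\lambda$ are not enough by themselves; it is precisely here that quasisymmetry (and not merely the two-sided comparability of $\lambda$ and $\mu$) is used essentially, via the midpoint splitting. A minor technical point is that the computations in \emph{(ii)}--\emph{(iii)} must be carried out for a merely continuous $h$, which is legitimate because averaging a continuous function already yields a $C^1$ extension, so no a priori differentiability of $h$ is needed.
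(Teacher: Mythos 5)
Your argument is correct and is essentially the classical proof: the paper offers no proof of this proposition but simply cites Beurling--Ahlfors \cite{BA}, and your construction of $F=U+iV$ with the midpoint-splitting lower bound on the Jacobian, together with the quadrilateral-modulus argument for the converse, is exactly the standard argument from that source (and from Ahlfors' \emph{Lectures}). No gaps beyond the routine properness check you already flag.
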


This quasiconformal extension is explicitly written in terms of  $h$, and is called the Beurling--Ahlfors extension in the literature.  
Later, Douady and Earle \cite{DE} gave a quasiconformal extension of a quasisymmetric homeomorphism, 
called the barycentric extension, in a conformally natural way. We will explain this extension in Section 4.

\begin{definition}
Let $\rm QS(\mathbb{R})$ denote the group of all quasisymmetric homeomorphisms of $\mathbb{R}$. 
The {\it universal Teichm\"uller space} $T$ is  defined as the group $\rm QS(\mathbb{R})$ modulo the left action of the group $\rm Aff(\mathbb R)$ of all real affine mappings $z \mapsto az + b, \, a > 0, b \in \mathbb R$, i.e., 
$T = {\rm Aff}(\mathbb R)\backslash {\rm QS}(\mathbb{R})$. 
\end{definition}

See monographs \cite{Le, Na} for comprehensive introduction on Teichm\"uller spaces.

Let $M(\mathbb{U})$ denote the open unit ball of the Banach space $L^{\infty}(\mathbb{U})$ of essentially bounded measurable functions  
on $\mathbb{U}$. An element in $M(\mathbb{U})$ is called a {\it Beltrami coefficient}. 
By the measurable Riemann mapping theorem, a Beltrami coefficient $\mu \in M(\mathbb U)$ determines uniquely
a quasiconformal homeomorphism $F$ on $\mathbb U$ with its complex dilatation $\mu_F=F_{\bar z}/F_{z}$ equal to $\mu$
up to post composition with conformal mappings. See \cite{Ah}.

By Proposition \ref{BAext} with the measurable Riemann mapping theorem, the universal Teich\-m\"ul\-ler space $T$ can be 
also defined as the set of all equivalence classes $[\mu]$ of $\mu \in M(\mathbb U)$, where $\mu, \mu' \in M(\mathbb U)$
are equivalent if they produce quasiconformal homeomorphisms of $\mathbb U$ onto itself 
having the same boundary extension to $\mathbb R$. We call the quotient map $\pi:M(\mathbb U) \to T$
the {\it Teichm\"uller projection}.

\subsection{BMO, VMO, and $A_\infty$-weights}

The functions in $\rm BMO(\mathbb{R})$ are characterized by the boundedness of their mean oscillations over intervals. The functions in  $\rm VMO(\mathbb{R})$ are those with additional property that their mean oscillations over small intervals are small. To be precise: 

\begin{definition}
A locally integrable  function $u$ on $\mathbb R$ belongs to  BMO  if
$$
\Vert u \Vert_{ \rm BMO(\mathbb{R})} = \sup_{I \subset \mathbb R}\frac{1}{|I|} \int_I |u(x)-u_I| dx <\infty,
$$
where the supremum is taken over all bounded intervals $I$ on $\mathbb R$ and $u_I$ denotes the integral mean of $u$
over $I$. The set of all BMO functions on $\mathbb R$ is denoted by ${\rm BMO}(\mathbb R)$.
This is regarded as a Banach space with the BMO-norm $\Vert \cdot \Vert_{ \rm BMO(\mathbb{R})}$
by ignoring the difference of constant functions. 
It is said that $u \in {\rm BMO}(\mathbb R)$ belongs to VMO if
$$ 
\lim_{|I| \to 0}\frac{1}{|I|} \int_I |u(x)-u_I| dx=0,
$$
and the set of all such functions is denoted by ${\rm VMO}(\mathbb R)$.
\end{definition}

The spaces $\rm BMO(\mathbb S)$ and $\rm VMO(\mathbb S)$ can be defined in the same way. 

Sarason \cite{Sa} proved that 
$\rm VMO(\mathbb{R})$ is the closure of ${\rm BMO}(\mathbb{R}) \cap {\rm UC}(\mathbb R)$ in the BMO-norm,
where ${\rm UC}(\mathbb R)$ denotes the set of uniformly continuous functions on $\mathbb R$. 
In particular, $\rm VMO(\mathbb{R})$ is a closed subspace of ${\rm BMO}(\mathbb{R})$.

\begin{remark}
{\rm
The space ${\rm VMO}(\mathbb S)$ is the closure of $C^\infty(\mathbb S)$
in the BMO-norm.
Differently from the case of $\mathbb S$, however, it was shown by Martell, Mitrea et al. \cite[Theorem 1.8, Corollary 1.7]{MMMM} that
$L^\infty(\mathbb R) \cap {\rm UC}(\mathbb R)$ is not dense in ${\rm VMO}(\mathbb R)$, while ${\rm BMO}(\mathbb{R}) \cap C^\alpha(\mathbb R)$
is dense in $\rm VMO(\mathbb{R})$ in the BMO-norm. Here, 
$C^\alpha(\mathbb R)$ denotes the set of
H\"older continuous functions on $\mathbb R$ of order $\alpha \in (0,1)$.
}
\end{remark}

BMO functions satisfy the following {\it John--Nirenberg inequality}
(see \cite[Section VI.2]{Ga}, \cite[Section IV.1.3]{St2}).

\begin{proposition}\label{JN}
There exists two universal positive constants  $C_1, C_2>0$ such that for any BMO function $u$ on $\mathbb R$ $($or on $\mathbb S$$)$,  
any bounded closed interval $J \subset I$ on
any interval $I \subset \mathbb R$ $($or $I \subset \mathbb S$$)$,
the inequality
\begin{equation*}
    \frac{1}{|J|}|\{z \in J: |u(x) - u_J|> t \}|\leq C_1 {\rm exp}\left(\frac{-C_2t}{\Vert u \Vert_{{\rm BMO}(I)}}\right)
\end{equation*}
holds for all $t > 0$, where $\Vert u \Vert_{{\rm BMO}(I)}$ is the BMO-norm of $u$ on $I$.  
\end{proposition}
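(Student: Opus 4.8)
The statement is the classical John--Nirenberg inequality; the plan is to run the standard Calder\'on--Zygmund stopping-time argument, which produces a self-improving recursive estimate for the distribution function. First I would reduce to the case $J=I$: since every subinterval of $J$ is a subinterval of $I$, one has $\Vert u\Vert_{{\rm BMO}(J)}\le \Vert u\Vert_{{\rm BMO}(I)}$, so it suffices to prove the sharper inequality with $\Vert u\Vert_{{\rm BMO}(J)}$ in the denominator, which then implies the stated one. By homogeneity (replace $u$ by $u/\Vert u\Vert_{{\rm BMO}(J)}$) we may assume $\Vert u\Vert_{{\rm BMO}(J)}\le 1$. Introduce
$$
F(t)=\sup_J \frac{1}{|J|}\,\bigl|\{x\in J:\ |u(x)-u_J|>t\}\bigr|,
$$
the supremum being over all bounded closed intervals $J$ (arcs, in the circle case) with $\Vert u\Vert_{{\rm BMO}(J)}\le 1$; note $F(t)\le 1$ for every $t>0$, so $F$ is finite and the recursion below is not circular. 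The goal is to show $F(t)\le C_1 e^{-C_2 t}$.

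Next, fix such a $J$ and perform a dyadic Calder\'on--Zygmund decomposition of $|u-u_J|$ at a height $\lambda>1$ (taking $\lambda=2$ will do). Since the mean of $|u-u_J|$ over $J$ is at most $1<\lambda$, the stopping-time procedure produces pairwise disjoint dyadic subintervals $\{J_k\}$ of $J$ with $\lambda<\frac{1}{|J_k|}\int_{J_k}|u-u_J|\le 2\lambda$ and $|u-u_J|\le\lambda$ a.e.\ off $\bigcup_k J_k$. Summing the lower bound gives $\sum_k|J_k|\le\lambda^{-1}\int_J|u-u_J|\le\lambda^{-1}|J|$, and the upper bound gives $|u_{J_k}-u_J|\le 2\lambda$. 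Now for $t>2\lambda$, any $x\in J$ with $|u(x)-u_J|>t$ must lie in some $J_k$ (outside the union $|u-u_J|\le\lambda<t$), and there $|u(x)-u_{J_k}|\ge |u(x)-u_J|-|u_{J_k}-u_J|>t-2\lambda$. Since $\Vert u\Vert_{{\rm BMO}(J_k)}\le\Vert u\Vert_{{\rm BMO}(J)}\le 1$, applying the definition of $F$ on each $J_k$ yields
$$
\bigl|\{x\in J:\ |u(x)-u_J|>t\}\bigr|\le\sum_k |J_k|\,F(t-2\lambda)\le \lambda^{-1}|J|\,F(t-2\lambda).
$$
Taking the supremum over $J$ gives $F(t)\le\lambda^{-1}F(t-2\lambda)$ for all $t>2\lambda$.

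Finally I would iterate this recursion with $\lambda=2$: $F(t)\le\tfrac12 F(t-4)$ for $t>4$, so for $t\in(4n,4(n+1)]$ one gets $F(t)\le 2^{-n}\le 2\cdot 2^{-t/4}$, which is the claimed bound with $C_1=2$ and $C_2=\tfrac14\log 2$. The circle case is handled identically, with intervals replaced by arcs and the dyadic decomposition performed on an arc; alternatively one transports the estimate from $\mathbb R$ by a M\"obius change of variable, noting that BMO and the averages involved are controlled under such maps. I expect the only point needing care to be the bookkeeping in the stopping-time decomposition --- in particular, checking that the dyadic children realizing the stopping condition satisfy both $\lambda<\frac{1}{|J_k|}\int_{J_k}|u-u_J|$ and $\le 2\lambda$ (the latter since the dyadic parent, being non-stopping, has average $\le\lambda$ and length exactly $2|J_k|$), together with the fact that the BMO norm is automatically inherited by subintervals, which is immediate from its definition as a supremum over subintervals.
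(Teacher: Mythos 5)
Your argument is correct and complete: the reduction from $\Vert u\Vert_{{\rm BMO}(I)}$ to $\Vert u\Vert_{{\rm BMO}(J)}$ is legitimate (the latter is smaller, so the exponential bound only improves), the normalization and the finiteness of $F$ via $F\le 1$ avoid any circularity, the Calder\'on--Zygmund stopping-time bookkeeping (selected intervals have averages in $(\lambda,2\lambda]$, hence $|u_{J_k}-u_J|\le 2\lambda$ and $\sum_k|J_k|\le\lambda^{-1}|J|$, with $|u-u_J|\le\lambda$ a.e.\ off the union by Lebesgue differentiation) is right, and the iteration of $F(t)\le\tfrac12F(t-4)$ indeed yields $C_1=2$, $C_2=\tfrac14\log 2$, with the arc case on $\mathbb S$ handled verbatim. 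Note, however, that the paper does not prove this proposition at all: it is the classical John--Nirenberg inequality, quoted with references to Garnett and to Stein, and the proof in those sources is essentially the same dyadic stopping-time argument you wrote out, so your proposal supplies the standard proof rather than an alternative to anything in the paper.
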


The exponentials of BMO functions are closely related to the Muckenhoupt weights.
There are several equivalent definitions of $A_{\infty}$-weights (see \cite{CF}), and the following is one of them.

\begin{definition} 
A locally integrable non-negative measurable function $\omega \geq 0$ on $\mathbb R$ is called a {\it weight}. We say that $\omega$ is an {\it $A_{\infty}$-weight} 
if there exist two positive constants $C$ and $\alpha$ such that 
$$
\frac{\int_E\omega(x)dx}{\int_I\omega(x)dx}\leq C\left(\frac{|E|}{|I|}\right)^{\alpha}
$$
whenever $I\subset \mathbb{R}$ is a bounded closed interval and $E\subset I$ a measurable subset.  
\end{definition}

A weight $\omega$ is called {\it doubling} if the measure $\omega(x)dx$ is doubling.
If we use the above definition, it is easy to see that 
an $A_{\infty}$-weight is doubling. But, the converse is not.
Fefferman and Muckenhoupt \cite{FM} provided an example of a weight that satisfies the doubling condition but not $A_{\infty}$. 

If $\omega$ is an $A_{\infty}$-weight, then $\log\omega$ is a BMO function.  
Conversely, if $\log\omega$ is a real-valued BMO function, then $\omega^{\delta}$ is an $A_{\infty}$-weight 
for some small $\delta > 0$. This is a consequence from the John--Nirenberg inequality,
but $\omega$ itself need not be even locally integrable, and thus need not be an $A_{\infty}$-weight (see \cite[p.409]{GR}). 
However,  
if $\log\omega$ is a real-valued VMO function on the unit circle $\mathbb S$, 
then by the John--Nirenberg inequality, $\omega$ is an $A_{\infty}$-weight on $\mathbb S$ (see \cite[p.474]{GR}).
Namely, the local BMO norm of a VMO function $\log\omega$ can be made so small on a small interval that
$\omega$ is a local $A_{\infty}$-weight on this interval. Then, 
this is in fact an $A_{\infty}$-weight on $\mathbb S$ by its compactness.

\subsection{Beltrami coefficients inducing Carleson measures}
We define the spaces of Beltrami coefficients characterizing particular quasiconformal homeomorphisms
considered in our research.

\begin{definition}
Let $\lambda$ be a positive Borel measure on the upper half-plane $\mathbb{U}$. 
We say that $\lambda$ is a {\em Carleson measure} if
$$
\Vert \lambda \Vert_c  = \sup_{I \subset \mathbb R} \frac{\lambda(I \times (0,|I|])}{|I|} < \infty,
$$
where the supremum is taken over all bounded closed interval $I \subset \mathbb R$ and
$I \times (0,|I|] \subset \mathbb U$ is a Carleson box. The set of all Carleson measures on $\mathbb U$ is denoted by
${\rm CM}(\mathbb U)$. 
A Carleson measure $\lambda \in {\rm CM}(\mathbb U)$ is called {\em vanishing} if
$$\lim_{|I| \to 0}\frac{\lambda(I \times (0,|I|])}{|I|} = 0.$$
The set of all vanishing Carleson measures on $\mathbb U$ is denoted by
${\rm CM}_0(\mathbb U)$.
\end{definition}

\begin{definition}
Let $\mathcal{L}(\mathbb{U})$ be the Banach space of all essentially bounded measurable functions $\mu$ on $\mathbb{U}$ 
such that $\lambda_{\mu} \in {\rm CM}(\mathbb{U})$ 
for $d\lambda_{\mu}(z) = |\mu(z)|^2\rho_{\mathbb{U}}(z)dxdy$. Here, $\rho_{\mathbb U}$ is the hyperbolic density on $\mathbb U$. 
The norm of $\mathcal{L}(\mathbb{U})$ is given by $\Vert \mu \Vert_\infty+\Vert \lambda_\mu \Vert_c^{1/2}$.
Let $\mathcal{L}_0(\mathbb{U})$ be the subspace of $\mathcal{L}(\mathbb{U})$ consisting of all elements $\mu$ 
such that $\lambda_{\mu} \in {\rm CM}_0(\mathbb{U})$.  Moreover, we set the corresponding spaces of Beltrami coefficients as  $\mathcal{M}(\mathbb{U}) = M(\mathbb{U}) \cap \mathcal{L}(\mathbb{U})$, and $\mathcal{M}_0(\mathbb{U}) = M(\mathbb{U}) \cap \mathcal{L}_0(\mathbb{U})$. 
\end{definition}

On the unit disk $\mathbb D$, the corresponding spaces of Carleson measures
$\rm CM(\mathbb{D})$ and vanishing Carleson measures $\rm CM_0(\mathbb{D})$ are defined in the same way (see \cite[p.231]{Ga}),
and so are $\mathcal{M}(\mathbb{D})$ and $\mathcal{M}_0(\mathbb{D})$.

\subsection{Strongly quasisymmetric homeomorphism}
The notion of strongly quasisymmetric homeomorphisms was introduced by Semmes \cite{Se}. 
This subclass is much related with and also has wide application to some important problems in real and harmonic analysis (see \cite{Da}). 

\begin{definition}
An increasing homeomorphism $h$ of $\mathbb{R}$ onto itself is said to be {\it strongly quasisymmetric} if $h$ is locally absolutely continuous and $h'$ belongs to the class of $A_{\infty}$-weights. Let ${\rm SQS}(\mathbb{R})$ denote the set of all strongly quasisymmetric homeomorphisms of $\mathbb{R}$ onto itself.
\end{definition}

In particular, if $h \in \rm SQS(\mathbb{R})$ then $\log h'$ belongs to $\rm BMO(\mathbb{R})$. 
By the definition of $A_{\infty}$-weight, we see that ${\rm SQS}(\mathbb{R})$ is preserved under the composition of elements.
In addition, by \cite[Lemma 5]{CF}, the inverse operation also preserves ${\rm SQS}(\mathbb{R})$.
Thus, ${\rm SQS}(\mathbb{R})$ is a group.
Moreover,
since an $A_{\infty}$-weight defines a doubling measure,
$\rm SQS(\mathbb{R})$ is a subgroup of $\rm QS(\mathbb{R})$.

\begin{definition}
We say that a strongly quasisymmetric homeomorphism $h \in \rm SQS(\mathbb{R})$ is {\it strongly symmetric} if $\log h'$ belongs to $\rm VMO(\mathbb{R})$. Let $\rm SS(\mathbb{R})$ denote the set of all strongly symmetric homeomorphisms of $\mathbb{R}$. 
\end{definition}

On the unit circle $\mathbb S$, strongly quasisymmetric and symmetric homeomorphisms are defined similarly.
The sets of those are denoted by ${\rm SQS}(\mathbb{S})$ and ${\rm SS}(\mathbb{S})$ respectively.
These classes were investigated in \cite{FHS, SW, We, WS}
during their study of BMO theory on Teichm\"uller spaces. In particular, it is known that
${\rm SS}(\mathbb S)$ is a subgroup of ${\rm SQS}(\mathbb S)$.

Concerning the quasiconformal extension of strongly quasisymmetric homeomorphisms to $\mathbb U$, and conversely
the boundary extension of quasiconformal homeomorphisms with complex dilatations in $\mathcal M(\mathbb U)$,
the results in Fefferman, Kenig and Pipher \cite[Theorems 2.3 and 4.2]{FKP} imply the following claim adapted to our purpose.

\begin{proposition}\label{SQSext}
An increasing homeomorphism $h$ of $\mathbb{R}$ onto itself belongs to ${\rm SQS}(\mathbb R)$ if and only if  
$h$ continuously extends to some quasiconformal homeomorphism of $\mathbb{U}$ onto itself 
whose complex dilatation belongs to $\mathcal M(\mathbb U)$.
\end{proposition}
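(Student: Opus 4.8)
The plan is to derive both implications from the theory of Fefferman, Kenig and Pipher \cite{FKP} on the Dirichlet problem for divergence-form elliptic operators that are Carleson perturbations of the Laplacian, after installing the usual dictionary between quasiconformal self-maps of $\mathbb U$ and such operators. Namely, if $F$ is a quasiconformal homeomorphism of $\mathbb U$ onto itself with complex dilatation $\mu = \mu_F$, then a function $u$ is harmonic on $\mathbb U$ if and only if $v = u \circ F$ solves $\operatorname{div}(A_\mu \nabla v) = 0$ for a symmetric matrix field $A_\mu = A_\mu(z)$ which is uniformly elliptic (because $\Vert \mu \Vert_\infty < 1$) and satisfies $|A_\mu(z) - I| \asymp |\mu(z)|$ with constants depending only on $\Vert \mu \Vert_\infty$. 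Hence the hypothesis $\mu \in \mathcal M(\mathbb U)$, i.e.\ $|\mu(z)|^2\,dxdy/y \in {\rm CM}(\mathbb U)$, says precisely that $A_\mu$ is an $L^2$-Carleson perturbation of the identity matrix in the sense of \cite[Theorems 2.3 and 4.2]{FKP}; conversely, such a perturbation of $I$ has the form $A_\mu$ for a $\mu \in \mathcal M(\mathbb U)$.

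\textbf{Sufficiency.} Suppose $h$ extends to such an $F$ with $\mu_F \in \mathcal M(\mathbb U)$; in particular $h \in {\rm QS}(\mathbb R)$ by Proposition \ref{BAext}. Put $A = A_{\mu_F}$ and let $\omega^A$ denote its elliptic measure on $\mathbb R$ with pole at $F^{-1}(i)$. A standard change of variables identifies $\omega^A$ as the push-forward under $h^{-1}$ of the harmonic measure of $\mathbb U$ with pole at $i$: the $A$-solution with boundary data $\chi_E$ is $u_{h(E)} \circ F$, where $u_{h(E)}$ is the Poisson extension of $\chi_{h(E)}$, and evaluating at $F^{-1}(i)$ gives $\omega^A(E) = \omega_i^{\rm harm}(h(E))$. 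By the perturbation theorem of \cite{FKP}, $\omega^A$ is an $A_\infty$-weight with respect to Lebesgue measure; since the Poisson density $P(i,\cdot)$ is bounded above and below on every bounded interval, this forces $h^{-1}$ to be locally absolutely continuous with $(h^{-1})'$ an $A_\infty$-weight, i.e.\ $h^{-1} \in {\rm SQS}(\mathbb R)$. As ${\rm SQS}(\mathbb R)$ is a group, $h \in {\rm SQS}(\mathbb R)$.

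\textbf{Necessity.} Conversely, let $h \in {\rm SQS}(\mathbb R)$, so that $h'$ is an $A_\infty$-weight and $\log h' \in {\rm BMO}(\mathbb R)$. One now needs to exhibit a quasiconformal extension with Carleson dilatation, for which the convolution-type (Beurling--Ahlfors-like) extension $F$ of $h$ constructed in \cite{FKP} is used; here $\Vert \mu_F \Vert_\infty < 1$ is automatic from the quasisymmetry of $h$. Writing $\mu_F = (F_x + iF_y)/(F_x - iF_y)$ and expanding $F_x, F_y$ through the defining convolutions of $h$ against fixed bump kernels of width $y$, one estimates $|\mu_F(x,y)|$ by a scale-$y$ increment of $\log h'$ at $x$ — a quantity with genuine cancellation, not merely the local mean oscillation of $\log h'$. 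The Fefferman--Stein characterization of ${\rm BMO}$ by Carleson measures (equivalently, the dyadic square-function description of ${\rm BMO}$) then yields $|\mu_F(x,y)|^2\,dxdy/y \in {\rm CM}(\mathbb U)$, hence $\mu_F \in \mathcal M(\mathbb U)$.

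\textbf{Main obstacle.} The delicate step is in the necessity direction: the extension must be chosen so that $|\mu_F|$ is dominated \emph{pointwise} by an increment of $\log h'$ that is \emph{$L^2$-Carleson summable}. A crude bound of $|\mu_F(x,y)|$ by the oscillation of $\log h'$ over $(x-y,x+y)$ is not enough, because integrating against $dxdy/y$ produces a divergent $\int_0^{|I|} dy/y$; the cancellation built into the difference of averages at consecutive scales is what makes the Carleson bound hold. In the sufficiency direction the only point requiring care is the change-of-variables identification of the $A_{\mu_F}$-elliptic measure with an $h$-determined measure on $\mathbb R$, which is routine given the transformation rule $u$ harmonic $\iff u\circ F$ an $A_{\mu_F}$-solution; otherwise both directions rest on the hard analysis already carried out in \cite{FKP}.
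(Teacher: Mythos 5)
Your proposal is correct and follows essentially the same route as the paper, which deduces the proposition from Fefferman--Kenig--Pipher \cite[Theorems 2.3 and 4.2]{FKP}: the elliptic-measure perturbation theorem for the sufficiency direction and the heat-kernel variant of the Beurling--Ahlfors extension (whose dilatation is controlled by a cancellative, square-function-type increment of $\log h'$ rather than its raw oscillation) for the necessity direction. The only slip is that the identification $\omega^A(E)=\omega_i^{\mathrm{harm}}(h(E))$ directly yields $h'\in A_\infty$, i.e.\ $h\in{\rm SQS}(\mathbb R)$, rather than the statement about $h^{-1}$; this is harmless since ${\rm SQS}(\mathbb R)$ is closed under inversion by \cite[Lemma 5]{CF}.
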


Precisely, a variant of the Beurling--Ahlfors extension by the heat kernel introduced in \cite{FKP}
gives an appropriate extension (quasiconformal diffeomorphism) for strongly quasisymmetric homeomorphisms, 
while a variant of the Beurling--Ahlfors extension constructed by Semmes \cite{Se} 
is also valid in this case under the assumption that $\log h'$ has small BMO norm. 

Due to the conformal invariance of Carleson measures (see \cite[Lemma VI.3.3]{Ga}), 
the corresponding statement to Proposition \ref{SQSext} for ${\rm SQS}(\mathbb S)$ and $\mathcal M(\mathbb D)$
holds true. However, for strongly symmetric homeomorphisms, the situation is different. 

In \cite[Theorem 4.1]{SW}, the corresponding claim for strongly symmetric homeomorphisms on $\mathbb S$ is
proved though this only asserts the existence of desired quasiconformal extension. Later, under this existence result,
it is shown that the barycentric extension is in fact such an extension (see Remark in Section 5).

\begin{proposition}\label{SW4.1}
A sense-preserving homeomorphism $\varphi$ of $\mathbb S$ onto itself belongs to
$\rm SS(\mathbb S)$ if and only if $\varphi$ continuously extends to some quasiconformal homeomorphism of $\mathbb D$ 
onto itself whose complex dilatation belongs to $\mathcal{M}_0(\mathbb D)$. 
\end{proposition}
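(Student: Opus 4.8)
The plan is to deduce both implications from a single scale-by-scale \emph{localization principle}: for a quasiconformal self-map $F$ of $\mathbb D$ with boundary map $\varphi$, the size of $\mu_F$ on the Carleson box above an arc $I$ both controls and is controlled by the mean oscillation of $\log\varphi'$ over a bounded dilate of $I$. The non-vanishing form of this principle is exactly the content of Proposition \ref{SQSext}, carried over to $\mathbb D$ and $\mathbb S$ by the conformal invariance of Carleson measures; so the bulk of the assertion---that some such $F$ exists with $\mu_F\in\mathcal M(\mathbb D)$ if and only if $\varphi\in{\rm SQS}(\mathbb S)$---is already available, and the task reduces to showing that smallness of $\lambda_{\mu_F}(I\times(0,|I|])/|I|$ for short arcs $I$ is equivalent to smallness of the mean oscillation of $\log\varphi'$ over short arcs, i.e.\ to passing between the vanishing conditions defining $\mathcal M_0(\mathbb D)$ and ${\rm VMO}(\mathbb S)$.

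\noindent\textbf{From $\mathcal M_0(\mathbb D)$ to ${\rm SS}(\mathbb S)$.} Let $F$ have $\mu_F\in\mathcal M_0(\mathbb D)\subset\mathcal M(\mathbb D)$ with boundary map $\varphi$. By the above, $\varphi\in{\rm SQS}(\mathbb S)$, so $\varphi$ is locally absolutely continuous, $\varphi'$ is an $A_\infty$-weight, and $\log\varphi'\in{\rm BMO}(\mathbb S)$; only $\log\varphi'\in{\rm VMO}(\mathbb S)$ is at issue. I would prove a localized estimate bounding $\Vert\log\varphi'\Vert_{{\rm BMO}(I)}$ by a quantity that tends to $0$ with the Carleson norm of $\mu_F$ over the box above $3I$, plus a contribution from $\mu_F$ away from $I$ that is harmless because it varies smoothly at the scale $|I|$; since $\mu_F\in\mathcal M_0(\mathbb D)$ the box contribution over $3I$ tends to $0$ as $|I|\to0$, whence $\Vert\log\varphi'\Vert_{{\rm BMO}(I)}\to0$, which is the ${\rm VMO}$ condition. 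A cleaner variant avoids the far-field bookkeeping by normal families: were the conclusion false, there would be arcs $I_n$ with $|I_n|\to0$ on which the mean oscillation of $\log\varphi'$ stays bounded below; pre- and post-composing $F$ with M\"obius transformations of $\mathbb D$ that carry a fixed arc $I_*$ onto $I_n$ and renormalize, one gets self-maps $F_n$ of $\mathbb D$ with $\Vert\mu_{F_n}\Vert_\infty=\Vert\mu_F\Vert_\infty<1$ and, by conformal invariance of $|\mu|^2\rho_{\mathbb D}\,dx\,dy$ together with the vanishing Carleson hypothesis, $\mu_{F_n}\to0$ in $L^2_{\rm loc}$ near $I_*$; a locally uniform limit $F_\infty$ is conformal near $I_*$, extends analytically across it by reflection, and so has real-analytic boundary values there, contradicting the lower bound (the renormalized quantities retain it because the $A_\infty$-constants in play are conformally controlled). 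One may also argue most softly: compactly supported Beltrami coefficients lie in $\mathcal M_0(\mathbb D)$ and have real-analytic, hence strongly symmetric, boundary maps, and they approximate any element of $\mathcal M_0(\mathbb D)$ in the topology under which the correspondence of Proposition \ref{SQSext} is continuous, so $\log\varphi'$ is a ${\rm BMO}$-limit of ${\rm VMO}$ functions and ${\rm VMO}(\mathbb S)$ is closed in ${\rm BMO}(\mathbb S)$.

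\noindent\textbf{From ${\rm SS}(\mathbb S)$ to $\mathcal M_0(\mathbb D)$.} Since ${\rm SS}(\mathbb S)\subset{\rm QS}(\mathbb S)$, $\varphi$ admits a quasiconformal extension, and I would take $F$ to be a Beurling--Ahlfors-type extension realized intrinsically on $\mathbb D$ (via the $2\pi$-periodic lift of $\varphi$ to $\mathbb R$ and a conformal transfer, or a heat-kernel variant as in \cite{FKP} adapted to the disk), so that the behavior ``at $\infty$'' which obstructs the $\mathbb R$-analogue never enters. Writing, for the periodic lift, $x$ for the boundary coordinate and putting
\[
\omega(x,\delta)\ :=\ \frac{1}{2\delta}\int_{x-\delta}^{x+\delta}\big|\log\varphi'(t)-(\log\varphi')_{(x-\delta,\,x+\delta)}\big|\,dt
\]
for the mean oscillation of $\log\varphi'$ at the point $x$ and scale $\delta$, one has for such $F$ a pointwise bound $|\mu_F(z)|\le\Psi\big(\omega(x,\delta)\big)$, with $x$ and $\delta=1-|z|$ the coordinates of $z$ and $\Psi$ continuous, $\Psi(0)=0$---using that $\varphi'$ is an $A_\infty$-weight (which on $\mathbb S$ already follows from $\log\varphi'\in{\rm VMO}(\mathbb S)$) to pass between oscillations of $\varphi'$ and of $\log\varphi'$. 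Integrating over the Carleson box above an arc $I$, with $\rho_{\mathbb D}(z)\,dx\,dy\asymp dx\,d\delta/\delta$ and $(x-\delta,x+\delta)\subset 3I$ for $z$ in the box, reduces matters to the Fefferman--Stein-type estimate
\[
\iint_{I\times(0,|I|]}\omega(x,\delta)^2\,\frac{dx\,d\delta}{\delta}\ \le\ C\,\Vert\log\varphi'\Vert_{{\rm BMO}(3I)}^{2}\,|I|,
\]
which holds because the mean oscillations of a BMO function form a Carleson measure of norm controlled by the local BMO norm. Dividing by $|I|$ and letting $|I|\to0$ makes the right-hand side tend to $0$, since $\log\varphi'\in{\rm VMO}(\mathbb S)$; as $\varphi\in{\rm QS}(\mathbb S)$ forces $\Vert\mu_F\Vert_\infty<1$, we conclude $\mu_F\in\mathcal M_0(\mathbb D)$.

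\noindent The main obstacle is the direction ${\rm SS}(\mathbb S)\Rightarrow\mathcal M_0(\mathbb D)$, and within it two coupled points: (i) pinning down an explicit extension that is genuinely a quasiconformal homeomorphism of $\mathbb D$ onto itself with boundary map $\varphi$ and that admits the clean pointwise bound on $|\mu_F|$ in terms of the mean oscillation of $\log\varphi'$ at the matching hyperbolic scale---this is exactly where the $\mathbb S$-intrinsic realization is needed, to sidestep the $\infty$-phenomena that make the $\mathbb R$-analogue fail; and (ii) upgrading the classical Carleson-measure bound to its localized, vanishing form, i.e.\ checking that the far-field part of the construction does not spoil the decay of $\lambda_{\mu_F}(I\times(0,|I|])/|I|$. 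The reverse direction rests on the same localization read backwards and, in the normal-families version, on convergence strong enough to control the derivatives $\varphi_n'$ rather than merely $\varphi_n$, which is where conformal control of the $A_\infty$-constants enters.
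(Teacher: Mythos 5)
The paper does not actually prove this proposition: it is quoted from \cite[Theorem 4.1]{SW}, and the surrounding text records that the proof there rests on the density of $C^\infty(\mathbb S)$ in ${\rm VMO}(\mathbb S)$ --- an approximation argument rather different in flavour from the direct scale-by-scale estimates at the centre of your plan. So you are proposing an independent argument, and it must be judged on its own terms; judged that way, it is a plausible roadmap whose hardest steps --- the very ones you flag as ``the main obstacle'' --- are left open, and at least one of them fails as stated.

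Concretely: (i) in the direction ${\rm SS}(\mathbb S)\Rightarrow\mathcal M_0(\mathbb D)$, the pointwise bound $|\mu_F(z)|\le\Psi(\omega(x,\delta))$ with $\omega(x,\delta)$ the mean oscillation of $\log\varphi'$ at the \emph{single} scale $\delta$ is not available. What controls the dilatation of a Beurling--Ahlfors-type extension on $I_z$ is the $L^1$-closeness of $\varphi'$ (not $\log\varphi'$) to a constant there, and passing from oscillation of the logarithm to oscillation of the weight requires John--Nirenberg, hence control over all subintervals of $I_z$, i.e.\ the local BMO norm. But if you substitute $\Vert\log\varphi'\Vert_{{\rm BMO}(I_z)}$ for $\omega(x,\delta)$, the square-function step collapses, since $\iint_{I\times(0,|I|]}dx\,d\delta/\delta$ diverges and the Fefferman--Stein inequality you invoke is specific to the single-scale oscillation. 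The arguments that actually work (\cite{FKP}, \cite{WM20}) bound $|\mu_F|$ by a rapidly weighted average over \emph{all} scales of the normalized oscillation of $\varphi'$ itself and then invoke the reverse H\"older inequality; this is the substance of the proof, not the footnote your sketch makes of it. (ii) In the converse direction, the ``soft'' variant is circular about its topology: the truncations $\mu 1_{\{|z|<1-\epsilon\}}$ converge to $\mu$ in the Carleson seminorm precisely when $\lambda_\mu$ is vanishing, but not in $L^\infty$, and Proposition \ref{SQSext} is an existence statement, not a continuity statement, so no topology ``under which the correspondence is continuous'' has been exhibited; the known continuity of $\mu\mapsto\log\varphi_\mu'$ into BMO uses the full norm $\Vert\mu\Vert_\infty+\Vert\lambda_\mu\Vert_c^{1/2}$. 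The normal-families variant has the defect you yourself note: locally uniform convergence of the $F_n$ gives no control of $\varphi_n'$ or of its mean oscillation. The localized-estimate variant (bounding $\Vert\log\varphi'\Vert_{{\rm BMO}(I)}$ by the Carleson content above $3I$ plus a smooth far-field term) is the closest to a completable argument, but it is asserted, not carried out. In short, each branch of the proposal stops exactly where the real work begins.
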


This result uses a fact that $C^\infty(\mathbb S)$ is dense in
${\rm VMO}(\mathbb S)$, and
does not directly imply the corresponding statement for ${\rm SS}(\mathbb R)$ and $\mathcal M_0(\mathbb U)$.
Nevertheless, the result itself is satisfied and 
we obtain the following characterization of strongly symmetric homeomorphisms on $\mathbb R$
by their quasiconformal extensions.

\begin{proposition}\label{SSext}
An increasing homeomorphism $h$ of $\mathbb{R}$ onto itself belongs to ${\rm SS}(\mathbb R)$ if and only if  
$h$ continuously extends to some quasiconformal homeomorphism of $\mathbb{U}$ onto itself 
whose complex dilatation belongs to $\mathcal M_0(\mathbb U)$.
\end{proposition}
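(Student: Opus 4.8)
The plan is to prove both implications directly on $\mathbb U$, by localizing the quantitative content of Proposition \ref{SQSext}. I would deliberately avoid transporting Proposition \ref{SW4.1} through the Cayley transform: that map respects neither the classes involved (this failure is one of the phenomena studied in the paper) nor, more to the point, the scale of intervals near $\infty$, while membership in $\mathcal M_0(\mathbb U)$ and in ${\rm VMO}(\mathbb R)$ is a statement about \emph{all} small scales, $\infty$ included. The one elementary fact used repeatedly is: if $u \in {\rm VMO}(\mathbb R)$ and $c \ge 1$ is fixed, then the localized norm $\|u\|_{{\rm BMO}(cI)}$ --- the supremum of mean oscillations of $u$ over subintervals of $cI$ --- tends to $0$ as $|I| \to 0$, since every such subinterval has length at most $c|I|$.

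For the ``if'' direction, suppose $h$ extends to a quasiconformal $F$ of $\mathbb U$ with $\mu_F \in \mathcal M_0(\mathbb U) \subset \mathcal M(\mathbb U)$. Proposition \ref{SQSext} gives $h \in {\rm SQS}(\mathbb R)$, so $h$ is locally absolutely continuous, $h'$ is an $A_\infty$-weight, and $\log h' \in {\rm BMO}(\mathbb R)$; it remains to prove $\log h' \in {\rm VMO}(\mathbb R)$. I would localize the trace estimate underlying the ``only if'' part of Proposition \ref{SQSext} (from \cite[Theorem 4.2]{FKP}): for a subinterval $J$ of a bounded interval $I$, the mean oscillation of $\log h'$ over $J$ is dominated by the normalized Carleson mass $\lambda_{\mu_F}(\widetilde{J} \times (0,|\widetilde{J}|])/|\widetilde{J}|$ over a box $\widetilde{J}$ of size comparable to $J$ sitting over it, up to an error governed by the distant behavior of $\mu_F$ that is harmless at small scales. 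Since $\mu_F \in \mathcal M_0(\mathbb U)$, these masses tend to $0$ with $|J|$, and hence so does the mean oscillation of $\log h'$ over small intervals; thus $\log h' \in {\rm VMO}(\mathbb R)$. (Alternatively one can argue by contradiction and a normalization by affine maps on both sides, reducing a hypothetical failure of VMO to a sequence of quasiconformal maps of $\mathbb U$ whose dilatations have vanishing Carleson mass over the unit box but whose boundary values keep mean oscillation over $[0,1]$ bounded away from $0$, contradicting the quantitative form of Proposition \ref{SQSext}.)

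For the ``only if'' direction, suppose $h \in {\rm SS}(\mathbb R)$, and take $F$ to be the heat-kernel variant of the Beurling--Ahlfors extension of \cite{FKP}, which for every $h \in {\rm SQS}(\mathbb R)$ is a quasiconformal diffeomorphism with $\mu_F \in \mathcal M(\mathbb U)$; we must show $\mu_F \in \mathcal M_0(\mathbb U)$. The heart of the matter is to promote the Carleson bound of Proposition \ref{SQSext} to a localized one: there are an absolute constant $C$ and a fixed dilation factor $c$ such that
\[
\frac{1}{|I|}\,\lambda_{\mu_F}\!\big(I \times (0,|I|]\big) \ \le\ C\,\|\log h'\|_{{\rm BMO}(cI)}^{2}
\]
for every bounded interval $I$. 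This is plausible because the extension is of convolution type, so the value of $\mu_F$ at a point $(x,y)$ of the box $I \times (0,|I|]$ is, up to exponentially small tails, determined by $h$ on an interval about $x$ of length comparable to $y \le |I|$; the whole box thus only ``sees'' $\log h'$ on a fixed dilate $cI$, and substituting this pointwise control into the Carleson integral and invoking the John--Nirenberg inequality on $cI$ yields the displayed bound. Granting it, the elementary VMO fact of the first paragraph gives $\lambda_{\mu_F}(I \times (0,|I|])/|I| \to 0$ as $|I| \to 0$, i.e.\ $\mu_F \in \mathcal M_0(\mathbb U)$.

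The main obstacle is precisely this localized Carleson estimate in the ``only if'' part. On $\mathbb S$ it is bypassed in \cite{SW} by approximating the VMO boundary datum by smooth functions; as the text recalls, that density fails on $\mathbb R$, so the estimate must be carried out by hand, uniformly over all scales and all locations, and in particular near $\infty$, where the Cayley picture gives no help. Organizing the exponential tails of the mollifiers so that the box $I \times (0,|I|]$ depends only on $\log h'$ over a fixed dilate $cI$, up to an additive error that is itself $o(1)$ as $|I| \to 0$, is the technical core.
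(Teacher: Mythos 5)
Your proposal is correct and follows essentially the same route as the paper, which disposes of this proposition by citation: the ``only if'' direction is exactly \cite[Theorem 4.1]{WM20} (the heat-kernel variant of the Beurling--Ahlfors extension of \cite{FKP}, proved there by the localized Carleson estimate you display), and the ``if'' direction is \cite[Theorem 2.2]{Sh} (the localized trace estimate with dyadic tails you describe). You have correctly identified both the extension to use and the two local estimates that constitute the real technical content, as well as the reason the Cayley-transform shortcut via Proposition \ref{SW4.1} is unavailable.
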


Indeed, it was proved in \cite[Theorem 4.1]{WM20} that the variant of the Beurling--Ahlfors extension by the heat kernel yields
such a quasiconformal extension. The fact that the above boundary extension is strongly symmetric was obtained 
in \cite[Theorem 2.2]{Sh}.

\begin{definition}
The quotient space $T_b = {\rm Aff}(\mathbb R) \backslash {\rm SQS}(\mathbb{R})$ is called the BMO {\it Teichm\"uller space}.
This can be also defined by $T_b=\pi(\mathcal M(\mathbb U))$.
\end{definition}

This space was introduced by Astala and Zinsmeister \cite{AZ}. By the conformal invariance,
we can also define this by $T_b = \mbox{M\"ob}(\mathbb S) \backslash {\rm SQS}(\mathbb{S})$, where
$\mbox{M\"ob}(\mathbb S)$ is the group of M\"obius transformations keeping $\mathbb S$ invariant.

\begin{definition}
The quotient space $T_v = \mbox{M\"ob}(\mathbb S) \backslash {\rm SS}(\mathbb{S})$ is called the VMO {\it Teichm\"uller space},
and $T_v(\mathbb R)= {\rm Aff}(\mathbb R) \backslash {\rm SS}(\mathbb{R})$ the VMO {\it  Teichm\"uller space on the real line}.
These can be also defined by $T_v=\pi(\mathcal M_0(\mathbb D))$ and $T_v(\mathbb R)=\pi(\mathcal M_0(\mathbb U))$.
\end{definition}

These two VMO Teichm\"uller spaces are different. Considering all of them on $\mathbb R$ by taking the conjugate, 
we have the strict inclusion relations
$T_v \subset T_v(\mathbb R) \subset T_b$ (see Theorem \ref{SR}).
The VMO Teichm\"uller space on the real line was introduced by Shen \cite{Sh}.
The Teichm\"uller spaces $T_b$ and $T_v$ possess the group structure inherited from those of
${\rm SQS}(\mathbb{S})$ and ${\rm SS}(\mathbb{S})$.

\section{Uniform continuity: Proofs of Theorem \ref{thm2} and a theorem for symmetric homeomorphisms}

In this section, we give the real-variable proof of Theorem \ref{thm2} for strongly symmetric homeomorphisms. 
We also show 
that symmetric homeomorphisms defined below have a very similar property to that in Theorem \ref{thm2}.

It is known that each strongly quasisymmetric homeomorphism $h$ induces a bounded linear isomorphism 
$P_h: \rm BMO(\mathbb{R})\to BMO(\mathbb{R})$ by $P_h(u) = u \circ h$ (see Jones \cite{Jo}). We note that the operator 
$P_h$ does not necessarily maps $\rm VMO(\mathbb{R})$ into itself. For example, there exist strongly symmetric homeomorphisms $g$ and $h$ (see Section 6 for specific constructions) such that  
$\log (g\circ h)' = \log g' \circ h + \log h' \notin \rm VMO(\mathbb{R})$. Since $\log h' \in \rm VMO(\mathbb{R})$, 
we see that $P_h(\log g')= \log g' \circ h \notin \rm VMO(\mathbb{R})$. 

However, under the uniform continuity of $h$, the operator $P_h$ maps $\rm VMO(\mathbb{R})$ properly.
Namely, we have the following:

\begin{proposition}\label{pullback}
Let $h \in {\rm SQS}(\mathbb{R})$ such that $h$ is uniformly continuous on $\mathbb{R}$. Then, 
the operator $P_h$ maps 
$\rm VMO(\mathbb{R})$ into $\rm VMO(\mathbb{R})$. 
\end{proposition}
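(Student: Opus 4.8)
The plan is to reduce everything to Sarason's description of ${\rm VMO}(\mathbb{R})$ and Jones's boundedness of $P_h$ on ${\rm BMO}(\mathbb{R})$, with the hypothesis on $h$ entering at a single elementary point. By Sarason's theorem quoted above, ${\rm VMO}(\mathbb{R})$ is the ${\rm BMO}$-norm closure of ${\rm BMO}(\mathbb{R})\cap {\rm UC}(\mathbb{R})$, and since $h\in {\rm SQS}(\mathbb{R})$, Jones's result provides a bounded linear operator $P_h\colon {\rm BMO}(\mathbb{R})\to {\rm BMO}(\mathbb{R})$ with $P_h(u)=u\circ h$. Hence it suffices to show that $P_h$ carries the dense subspace ${\rm BMO}(\mathbb{R})\cap {\rm UC}(\mathbb{R})$ into ${\rm VMO}(\mathbb{R})$: once this is known, continuity of $P_h$ together with closedness of ${\rm VMO}(\mathbb{R})$ in ${\rm BMO}(\mathbb{R})$ force $P_h({\rm VMO}(\mathbb{R}))\subseteq {\rm VMO}(\mathbb{R})$.

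For the invariance of the dense subspace, let $v\in {\rm BMO}(\mathbb{R})\cap {\rm UC}(\mathbb{R})$. Then $P_h v=v\circ h\in {\rm BMO}(\mathbb{R})$ by Jones's theorem, while $v\circ h$ is uniformly continuous on $\mathbb{R}$, being the composition of the uniformly continuous maps $h$ and $v$; hence $P_h v\in {\rm BMO}(\mathbb{R})\cap {\rm UC}(\mathbb{R})\subseteq {\rm VMO}(\mathbb{R})$. This is essentially the whole argument, so there is no obstacle of any depth; the uniform continuity of $h$ is exactly what makes the composition uniformly continuous, and this is precisely the step that fails without that hypothesis (compare the example recalled above of $g,h\in {\rm SS}(\mathbb{R})$ with $\log g'\circ h\notin {\rm VMO}(\mathbb{R})$). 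The one point needing care is that the approximation must be carried out inside ${\rm BMO}\cap {\rm UC}$ rather than inside $L^\infty\cap {\rm UC}$, which, as noted above, is not dense in ${\rm VMO}(\mathbb{R})$.

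Alternatively, one can give a self-contained real-variable proof that displays the role of uniform continuity more directly. For a bounded interval $I$, the substitution $y=h(x)$ rewrites $\frac{1}{|I|}\int_I|u\circ h-(u\circ h)_I|\,dx$, up to a factor $2$, as the weighted mean oscillation $\frac{1}{w(J)}\int_J|u-u_J|\,w\,dy$, where $J=h(I)$, $w=(h^{-1})'$ (so that $w(J)=|I|$), and $u_J$ denotes the Lebesgue mean of $u$ over $J$. Since $h^{-1}\in {\rm SQS}(\mathbb{R})$ (the group structure being established above), $w$ is an $A_\infty$-weight and therefore satisfies a reverse Hölder inequality with some exponent $p>1$ whose constant depends only on its $A_\infty$ data; pairing this, via Hölder's inequality, with the $L^{p'}$ form of the John--Nirenberg inequality (Proposition \ref{JN}) on $J$ bounds the weighted mean oscillation by $C\,\Vert u\Vert_{{\rm BMO}(J)}$. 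As $|I|\to 0$, uniform continuity of $h$ gives $|J|=|h(I)|\to 0$, while $\Vert u\Vert_{{\rm BMO}(J)}\to 0$ because $u\in {\rm VMO}(\mathbb{R})$; therefore $u\circ h\in {\rm VMO}(\mathbb{R})$.
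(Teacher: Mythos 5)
Your proposal is correct. Your primary argument --- routing everything through Sarason's density theorem --- is genuinely different from the paper's proof, which follows Anderson--Becker--Lesley and establishes the quantitative bound $\frac{1}{|I|}\int_I |P_h(u)-(P_h(u))_I|\,dx \le C\,\Vert u\Vert_{{\rm BMO}(h(I))}$ directly, by applying the John--Nirenberg inequality on $h(I)$ to the level sets $E_t=\{y\in h(I): |u(y)-u_{h(I)}|>t\}$, pulling them back with the $A_\infty$ distributional inequality for $(h^{-1})'$, and integrating the distribution function; uniform continuity then enters only in the final limit $|h(I)|\to 0$. Your soft argument replaces that computation by three quoted facts (Sarason's density of ${\rm BMO}(\mathbb{R})\cap{\rm UC}(\mathbb{R})$ in ${\rm VMO}(\mathbb{R})$, Jones's boundedness of $P_h$ on ${\rm BMO}(\mathbb{R})$, and closedness of ${\rm VMO}(\mathbb{R})$), the hypothesis on $h$ appearing only in the trivial remark that a composition of uniformly continuous maps is uniformly continuous; you are also right that the approximation must be taken from ${\rm BMO}\cap{\rm UC}$ rather than $L^\infty\cap{\rm UC}$, which is the one genuine pitfall on the line. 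What the paper's route buys is a self-contained quantitative estimate that does not invoke Jones's theorem as a black box (it in fact reproves it); what yours buys is brevity and transparency about where uniform continuity is used. Your alternative real-variable argument (change of variables to a weighted mean oscillation over $J=h(I)$ with weight $(h^{-1})'$, then reverse H\"older paired with the $L^{p'}$ form of John--Nirenberg) is essentially the paper's proof in slightly different technical dress, and it too is correct.
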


\begin{proof}
We follow the proof of Anderson, Becker and Lesley \cite[Lemma]{ABL}.
To see that this is composed by real analytic arguments, we show their proof.

Let $v=P_h(u)$ for any $u \in {\rm BMO}(\mathbb R)$, 
which is also in $\rm BMO(\mathbb{R})$. For every bounded interval $I \subset \mathbb R$,
we have
$$
\int_I |v(x)-v_I| dx \leq \int_I |v(x)-u_{h(I)}| dx+ \int_I |v_I-u_{h(I)}|dx \leq 2 \int_I |v(x)-u_{h(I)}| dx.
$$
Here, for $E_t=\{y \in h(I): |u(y) - u_{h(I)}|> t \}$, Proposition \ref{JN} implies that
\begin{equation*}
\frac{|E_t|}{|h(I)|}\leq C_1 {\rm exp}\left(\frac{-C_2t}{\Vert u \Vert_{{\rm BMO}(h(I))}}\right).
\end{equation*}
Since the inverse $h^{-1}$ also belongs to ${\rm SQS}(\mathbb R)$, there are positive constants $C$ and $\alpha$ 
for the $A_\infty$-weight $(h^{-1})'$ such that
$$
\frac{|h^{-1}(E_t)|}{|I|} \leq C\left(\frac{|E_t|}{|h(I)|}\right)^\alpha.
$$
We consider the distribution function $\lambda(t)=|h^{-1}(E_t)|$, where
$$
h^{-1}(E_t)=\{x \in I: |u\circ h(x)-u_{h(I)}|> t \}.
$$
Then, we have

\begin{align*}
&\quad \int_I |v(x)-u_{h(I)}| dx=\int_I |u \circ h(x)-u_{h(I)}| dx=\int_0^\infty \lambda(t)dt\\
&\leq CC_1^\alpha|I| \int_0^\infty {\rm exp}\left(\frac{-\alpha C_2 t}{\Vert u \Vert_{{\rm BMO}(h(I))}}\right)dt
=\frac{CC_1^\alpha}{\alpha C_2}|I| \Vert u \Vert_{{\rm BMO}(h(I))}.
\end{align*}
Therefore, we conclude that
$$
\frac{1}{|I|} \int_I |P_h(u)(x)-(P_h(u))_I|dx \leq \frac{2CC_1^\alpha}{\alpha C_2} \Vert u \Vert_{{\rm BMO}(h(I))}.
$$

Since $h$ is uniformly continuous on $\mathbb{R}$, 
we have $|h(I)|\to 0$ as $|I| \to 0$. Hence, the BMO norm $\Vert u \Vert_{{\rm BMO}(h(I))}$
tends uniformly to $0$ as $|I| \to 0$ owning to $u \in \rm VMO(\mathbb{R})$. This implies that
$P_h(u) \in \rm VMO(\mathbb{R})$.
\end{proof}

Theorem \ref{thm2} follows from this proposition.

\begin{proof}[Proof of Theorem \ref{thm2}]
Since $\rm SQS(\mathbb{R})$ is a group, we have $g \circ h^{-1} \in \rm SQS(\mathbb{R})$ 
by the condition $g, h \in \rm SS(\mathbb{R}) \subset \rm SQS(\mathbb{R})$. Noting that 
\begin{equation*}
\log(g\circ h^{-1})' = (\log g' - \log h')\circ h^{-1} = P_{h^{-1}}(\log g' - \log h'),
\end{equation*}
we conclude by Proposition \ref{pullback} that $\log(g\circ h^{-1})' \in \rm VMO(\mathbb{R})$, and thus 
$g\circ h^{-1} \in \rm SS(\mathbb{R})$.
\end{proof}

Theorem \ref{thm2} implies the following as well:

\begin{corollary}\label{3item}
The following statements hold:
\begin{enumerate}
    \item If $h \in \rm SS(\mathbb{R})$ and $h^{-1}$ is uniformly continuous on $\mathbb{R}$, then $h^{-1} \in \rm SS(\mathbb{R})$;
    
    \item If $g, h \in \rm SS(\mathbb{R})$ and $h, h^{-1}$ are uniformly continuous on $\mathbb{R}$, then $g\circ h \in \rm SS(\mathbb{R})$; 
    
    \item The set of elements $h \in \rm SS(\mathbb{R})$ such that both $h$ and $h^{-1}$ are uniformly continuous on $\mathbb{R}$ is a
    subgroup of ${\rm SQS}(\mathbb R)$.
    \end{enumerate}
\end{corollary}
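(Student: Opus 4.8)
The plan is to deduce all three items of Corollary \ref{3item} directly from Theorem \ref{thm2}, using only two elementary facts: the identity map $\mathrm{id}$ belongs to $\mathrm{SS}(\mathbb R)$ (since $\mathrm{id}'\equiv 1$ and $\log 1 = 0 \in \mathrm{VMO}(\mathbb R)$), and a composition of two uniformly continuous self-maps of $\mathbb R$ is again uniformly continuous. So there is essentially no new analysis to do here; the work is all packaged in Theorem \ref{thm2}, and what remains is bookkeeping.

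For statement (1), I would apply Theorem \ref{thm2} with $g$ taken to be $\mathrm{id}\in\mathrm{SS}(\mathbb R)$ and with the given $h$. Since $h^{-1}$ is assumed uniformly continuous, the theorem yields $\mathrm{id}\circ h^{-1}=h^{-1}\in\mathrm{SS}(\mathbb R)$. For statement (2), I would first invoke statement (1): because $h^{-1}$ is uniformly continuous, $h^{-1}\in\mathrm{SS}(\mathbb R)$. Then I apply Theorem \ref{thm2} again, keeping $g$ and replacing $h$ by $h^{-1}\in\mathrm{SS}(\mathbb R)$; the hypothesis required is that $(h^{-1})^{-1}=h$ be uniformly continuous, which is among the assumptions, so the conclusion is $g\circ(h^{-1})^{-1}=g\circ h\in\mathrm{SS}(\mathbb R)$.

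For statement (3), write $G$ for the set of $h\in\mathrm{SS}(\mathbb R)$ with both $h$ and $h^{-1}$ uniformly continuous. Since $\mathrm{SS}(\mathbb R)\subset\mathrm{SQS}(\mathbb R)$ and $\mathrm{SQS}(\mathbb R)$ is a group, it suffices to check that $G$ contains the identity and is closed under inversion and composition. The identity clearly lies in $G$. If $h\in G$, then $h^{-1}\in\mathrm{SS}(\mathbb R)$ by statement (1), and both $h^{-1}$ and $(h^{-1})^{-1}=h$ are uniformly continuous by hypothesis, so $h^{-1}\in G$. If $g,h\in G$, then $g\circ h\in\mathrm{SS}(\mathbb R)$ by statement (2); moreover $g\circ h$ is uniformly continuous as a composition of uniformly continuous maps, and $(g\circ h)^{-1}=h^{-1}\circ g^{-1}$ is uniformly continuous for the same reason, so $g\circ h\in G$.

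I do not expect a genuine obstacle: Theorem \ref{thm2} does all the heavy lifting, and the remaining content reduces to the single elementary observation that uniform continuity passes to compositions — given $\varepsilon>0$, pick $\delta_1$ for the outer map and then $\delta_2$ for the inner map so that $|x-y|<\delta_2$ forces the inner images within $\delta_1$, hence the outer images within $\varepsilon$ — which one records once and reuses for both $g\circ h$ and $(g\circ h)^{-1}$. The only point worth stating carefully is the order of the two applications of Theorem \ref{thm2} in statements (1) and (2), since Theorem \ref{thm2} is formulated asymmetrically (the uniform continuity hypothesis is on the map being inverted).
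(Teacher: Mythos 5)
Your proof is correct and is exactly the intended derivation: the paper states the corollary as an immediate consequence of Theorem \ref{thm2} without writing out the details, and your bookkeeping (taking $g=\mathrm{id}$ for item (1), applying the theorem to the pair $(g,h^{-1})$ for item (2), and closing under composition and inversion of uniformly continuous maps for item (3)) fills them in precisely as intended. No issues.
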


\begin{definition}
The subgroup of ${\rm SQS}(\mathbb R)$ consisting of all elements $h \in \rm SS(\mathbb{R})$ such that both $h$ and $h^{-1}$ are uniformly continuous on $\mathbb{R}$ is denoted by ${\rm SS}_{\rm uc}(\mathbb R)$.
\end{definition}

As mentioned in Section 1, ${\rm SS}_{\rm uc}(\mathbb R)$ acts on the VMO Teichm\"uller space 
$T_v(\mathbb R)={\rm Aff}(\mathbb R) \backslash {\rm SS}(\mathbb{R})$ as a group of its automorphisms.
For any $h \in {\rm SS}_{\rm uc}(\mathbb R)$, this action is defined by
$h_*([g])=[g \circ h^{-1}]$ for every $[g] \in T_v(\mathbb R)$, where $[g]$ denotes the equivalence class
represented by $g \in {\rm SS}(\mathbb{R})$.

Next, we consider a similar problem for symmetric homeomorphisms of $\mathbb R$.

\begin{definition}
A quasisymmetric homeomorphism $h$ of $\mathbb R$ is said to be {\it symmetric} if 
$$ 
\lim_{t\to 0}\,\frac{h(x+t)-h(x)}{h(x)-h(x-t)}=1 
$$
uniformly for all $x\in\mathbb{R}$.
Let $\rm S(\mathbb{R})$ denote the subset
of $\rm QS(\mathbb{R})$ consisting of all symmetric homeomorphisms of $\mathbb{R}$. 
\end{definition}

It is known that $h$ is symmetric if and only if $h$ can be extended to an asymptotically conformal homeomorphism $H$ 
of the upper half-plane $\mathbb{U}$ onto itself (see \cite{Ca, GS}). 
Here, we say that $H$ is {\it asymptotically conformal} if its complex dilatation $\mu_H = H_{\bar z}/H_z$ 
satisfies that
$$
{\rm ess}\!\!\!\!\!\!\!\sup_{0<y < t\qquad} \!\!\!\!\!\!\! |\mu_H(x+iy)| \to 0 \quad (t \to 0).
$$
In fact, the Beurling--Ahlfors extension of $h$ is asymptotically conformal when $h$ is symmetric. 

The class $\rm S(\mathbb{R})$ was first studied by Carleson \cite{Ca} when he discussed absolute continuity of quasisymmetric homeomorphisms. It was investigated in depth later by Gardiner and Sullivan \cite{GS} in their study of the asymptotic Teichm\"uller space $T_0=\mbox{M\"ob}(\mathbb S) \backslash {\rm S}(\mathbb{S})$ by
using ${\rm S}(\mathbb{S})$ similarly defined on the unit circle $\mathbb S$.
Recently, Hu, Wu and Shen \cite{HWS} introduced 
the Teichm\"uller space $T_0(\mathbb R)={\rm Aff}(\mathbb R) \backslash \rm S(\mathbb{R})$ on the real line.
This has been further generalized in \cite{WMp}. 

The inclusion relation $\rm SS(\mathbb{R}) \subset \rm S(\mathbb{R})$ 
is seen from the characterization of VMO functions in Sarason \cite[Theorem 2]{Sa}.
In more detail, by the John--Nirenberg inequality, the local $A_2$-constant for the exponential of a VMO function
tends to $1$ when the interval gets small. This can be applied to show the above inclusion.
See \cite[Lemma 3.3]{Sh}. 

In \cite{WM19}, we constructed counter-examples for showing that the class $\rm S(\mathbb{R})$ does not constitute a group under the composition. To be precise, we have proved that neither the composition nor the inverse preserves this class.  
However, we have the following result similar to Theorem \ref{thm2} for strongly symmetric homeomorphisms. 
This can be regarded as its prototype in the non-compact setting.

\begin{theorem}\label{thm1}
If $g, h \in \rm S(\mathbb{R})$ and $h^{-1}$ is uniformly continuous on $\mathbb{R}$, 
then $g\circ h^{-1} \in \rm S(\mathbb{R})$.
\end{theorem}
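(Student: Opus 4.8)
The plan is to argue directly from the definition of a symmetric homeomorphism. For a sense-preserving homeomorphism $F$ of $\mathbb R$ write $\rho_F(y,s)=\dfrac{F(y+s)-F(y)}{F(y)-F(y-s)}=\dfrac{m_F([y,y+s])}{m_F([y-s,y])}$, so that $F\in{\rm S}(\mathbb R)$ means $\rho_F(y,s)\to1$ as $s\to0$, uniformly in $y$. Since $g,h\in{\rm S}(\mathbb R)\subset{\rm QS}(\mathbb R)$ and ${\rm QS}(\mathbb R)$ is a group, $g\circ h^{-1}$ is already quasisymmetric, so only the uniform limit $\rho_{g\circ h^{-1}}\to1$ has to be proved. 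Put $\phi=h^{-1}$ (quasisymmetric, say with constant $M_1$, and uniformly continuous by hypothesis), fix $y\in\mathbb R$ and $s>0$, and set $x=\phi(y)$,
$$
t_-=\phi(y)-\phi(y-s)>0,\qquad t_+=\phi(y+s)-\phi(y)>0 ,
$$
so that $h(x-t_-)=y-s$, $h(x)=y$, $h(x+t_+)=y+s$, whence $m_h([x-t_-,x])=s=m_h([x,x+t_+])$ and
$$
\rho_{g\circ\phi}(y,s)=\frac{m_g([x,x+t_+])}{m_g([x-t_-,x])}.
$$
Uniform continuity of $\phi$ gives $\sup_y(t_++t_-)\to0$ as $s\to0$, while quasisymmetry of $\phi$ gives $t_+/t_-\in[1/M_1,M_1]$ always. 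It remains to show $\rho_{g\circ\phi}(y,s)\to1$ uniformly in $y$.

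The one elementary ingredient I need is an asymptotic affinity at rational scales: if $f\in{\rm S}(\mathbb R)$ and $J\in\mathbb N$ is fixed, then for every $j\in\{1,\dots,J\}$ one has $m_f([x,x+j\tau])/m_f([x,x+\tau])\to j$ as $\tau\to0$, uniformly in $x$. This follows by writing $[x,x+j\tau]$ as the union of $j$ consecutive intervals of length $\tau$ and applying the symmetry condition $\rho_f\to1$ at the common endpoints a bounded number of times; combining the cases $j$ and $N$ gives $m_f([x,x+(j/N)\tau])/m_f([x,x+\tau])\to j/N$ uniformly, for $j/N$ in any fixed bounded range.

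With this I would carry out three steps. \emph{Step 1: $t_+/t_-\to1$ uniformly.} Fix $N\in\mathbb N$ and choose $n=n(y,s)\in\mathbb N$ with $\frac{n-1}{N}t_-<t_+\le\frac{n}{N}t_-$; then $n\le NM_1+1$, so the rational‑scale affinity applied to $h$ with $\tau=t_-/N$ (which $\to0$ uniformly) gives $m_h([x,x+\frac{n}{N}t_-])=\frac{n}{N}m_h([x,x+t_-])(1+o(1))$ and likewise for $n-1$, uniformly in $y$. On the other hand $m_h([x,x+t_+])=s$ and, by the symmetry of $h$ at $x$ of radius $t_-$, $m_h([x-t_-,x])=m_h([x,x+t_-])(1+o(1))=s$, so $m_h([x,x+t_+])/m_h([x,x+t_-])=1+o(1)$. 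Sandwiching $m_h([x,x+t_+])$ between $m_h([x,x+\frac{n-1}{N}t_-])$ and $m_h([x,x+\frac{n}{N}t_-])$ then forces $t_+/t_-\in(1-\tfrac1N-o(1),\,1+\tfrac1N+o(1))$ uniformly in $y$; letting $N\to\infty$ gives $\sup_y|t_+/t_--1|\to0$ as $s\to0$. \emph{Step 2.} With $t_+/t_-\to1$, the same sandwiching applied to $g$ (using $\frac{N-1}{N}t_-\le t_+\le\frac{N+1}{N}t_-$ eventually) yields $m_g([x,x+t_+])=m_g([x,x+t_-])(1+o(1))$ uniformly. \emph{Step 3.} The symmetry of $g$ at $x$ of radius $t_-$ gives $m_g([x-t_-,x])=m_g([x,x+t_-])(1+o(1))$ uniformly. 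Substituting Steps 2 and 3 into the displayed formula for $\rho_{g\circ\phi}(y,s)$ gives $\rho_{g\circ\phi}(y,s)=1+o(1)$ uniformly, i.e.\ $g\circ h^{-1}\in{\rm S}(\mathbb R)$.

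The main obstacle is bookkeeping the uniformity: one must verify that every ``$o(1)$'' above is uniform in $y$ — this is exactly where uniform continuity of $h^{-1}$ enters, forcing $t_\pm\to0$ and $t_-/N\to0$ uniformly, and where the uniform finiteness of the index $n$ is used — and in Step 1 one must check that the sandwich really pins $t_+/t_-$ to within $O(1/N)$ before $N$ is sent to $\infty$. A conceptually cleaner but analytically heavier alternative is to use that $h$ and $g$ extend to asymptotically conformal homeomorphisms $H,G$ of $\mathbb U$ (e.g.\ the Beurling--Ahlfors extensions) and that $|\mu_{G\circ H^{-1}}(H(z))|\le(|\mu_G(z)|+|\mu_H(z)|)/(1-|\mu_G(z)||\mu_H(z)|)$; there it suffices to show ${\rm Im}\,H^{-1}(w)\to0$ uniformly as ${\rm Im}\,w\to0$, which follows from the standard comparison ${\rm Im}\,H(x+iy)\asymp h(x+y)-h(x-y)$ together with the uniform continuity of $h^{-1}$.
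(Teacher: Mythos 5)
Your argument is correct, and it rests on the same three pillars as the paper's proof: uniform continuity of $h^{-1}$ forces the pulled-back intervals $[x-t_-,x]$ and $[x,x+t_+]$ to shrink uniformly; symmetry of $h$ pins down the ratio $t_+/t_-$ (both intervals having $h$-image of length $s$); and symmetry of $g$ then converts $t_+/t_-\to 1$ into $\rho_{g\circ h^{-1}}\to 1$. The difference is organizational. The paper argues by contradiction with a sequence of consecutive intervals $J_n, J_n'$ of equal small length whose images under $g\circ h^{-1}$ have ratio bounded away from $1$: pulling back to $I_n=h^{-1}(J_n)$, $I_n'=h^{-1}(J_n')$, the symmetry of $g$ forces $|I_n|\geq(1+\varepsilon)|I_n'|$, and truncating $I_n$ to a consecutive interval $\widetilde I_n$ with $|\widetilde I_n|=|I_n'|$ then contradicts the symmetry of $h$ because $|h(I_n)|/|h(I_n')|=1$. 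You instead argue directly and isolate as an explicit lemma the ``rational-scale affinity'' $m_f([x,x+j\tau])/m_f([x,x+\tau])\to j$ (uniformly, for $j$ in a fixed finite range), which is exactly the quantitative fact that the paper's truncation step uses implicitly. Your version is longer but more transparent about where each uniformity comes from; in particular the bound $n\le NM_1+1$ that you flag is precisely what legitimizes applying the affinity lemma to only finitely many scales at once, and the order of limits ($s\to0$ first, then $N\to\infty$) is handled correctly. The quasiconformal-extension alternative you sketch at the end is viable but is not the route the paper takes for this theorem; the paper reserves that machinery for the strongly symmetric class in Sections 4 and 5.
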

\begin{proof}
Suppose that $g\circ h^{-1}$ is not symmetric. Then, for some $\delta>0$,
there are consecutive bounded closed intervals $J_n$ and $J_n'$ in $\mathbb R$ such that
$|J_n|=|J_n'| \to 0$ $(n \to \infty)$ and 
$$
\max\left\{\frac{|g\circ h^{-1}(J_n)|}{|g\circ h^{-1}(J_n')|},\frac{|g\circ h^{-1}(J_n')|}{|g\circ h^{-1}(J_n)|}\right\} \geq 1+\delta.
$$
Without loss of generality,
we may assume that $|g\circ h^{-1}(J_n)| \geq (1+\delta)|g\circ h^{-1}(J_n')|$.
Let $I_n = h^{-1}(J_n)$ and $I_n' = h^{-1}(J_n')$. Since $h^{-1}$ is uniformly continuous, we have $|I_n|\to 0$ and $|I_n'|\to 0$. 
By the symmetry of $g$ with $|g(I_n)| \geq (1+\delta)|g(I_n')|$, we see that
there exists $\varepsilon > 0$ such that 
$|I_n| \geq (1+\varepsilon)|I_n'|$ for all sufficiently large $n$. 
We choose $\widetilde{I}_n \subset I_n$ so that $\widetilde{I}_n$ and $I_n'$ are consecutive intervals in $\mathbb{R}$ 
with $|\widetilde{I}_n|=|I_n'|$. Then, $|I_n| \geq (1+\varepsilon)|\widetilde I_n|$ but
$$
\lim_{n \to \infty} \frac{|h(I_n)|}{|h(\widetilde I_n)|} =\lim_{n \to \infty} \frac{|h(I_n)|}{|h(I'_n)|}=\frac{|J_n|}{|J'_n|}=1.
$$
This contradicts that $h$ is a symmetric homeomorphism.
\end{proof}

As an immediate consequence from Theorem \ref{thm1}, we have: 

\begin{corollary}
The following statements hold:
\begin{enumerate}
    \item If $h \in \rm S(\mathbb{R})$ and $h^{-1}$ is uniformly continuous on $\mathbb{R}$, then $h^{-1} \in \rm S(\mathbb{R})$;
    
    \item If $g, h \in \rm S(\mathbb{R})$ and $h, h^{-1}$ are uniformly continuous on $\mathbb{R}$, then $g\circ h \in \rm S(\mathbb{R})$; 
    
    \item The set of elements $h \in \rm S(\mathbb{R})$ such that both $h$ and $h^{-1}$ are uniformly continuous on $\mathbb{R}$ is a
    subgroup of ${\rm QS}(\mathbb R)$.
    \end{enumerate}
\end{corollary}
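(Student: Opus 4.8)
The plan is to derive all three items formally from Theorem \ref{thm1}, using only that ${\rm S}(\mathbb R)$ contains the identity map and that uniform continuity is preserved under composition of homeomorphisms of $\mathbb R$.

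For item $(1)$, I would apply Theorem \ref{thm1} with $g$ taken to be the identity map $\mathrm{id}_{\mathbb R}$, which clearly belongs to ${\rm S}(\mathbb R)$. Since $h \in {\rm S}(\mathbb R)$ and $h^{-1}$ is uniformly continuous by hypothesis, Theorem \ref{thm1} gives $\mathrm{id}_{\mathbb R} \circ h^{-1} = h^{-1} \in {\rm S}(\mathbb R)$. For item $(2)$, the idea is to run Theorem \ref{thm1} with the arguments $g$ and $h^{-1}$, the latter playing the role of $h$. Indeed $g \in {\rm S}(\mathbb R)$; by item $(1)$, $h^{-1} \in {\rm S}(\mathbb R)$ because $h^{-1}$ is uniformly continuous; and the inverse of $h^{-1}$, namely $h$, is uniformly continuous by hypothesis. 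Hence Theorem \ref{thm1} yields $g \circ (h^{-1})^{-1} = g \circ h \in {\rm S}(\mathbb R)$.

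For item $(3)$, denote by $G$ the set in question. It is nonempty since $\mathrm{id}_{\mathbb R} \in G$, and $G \subset {\rm QS}(\mathbb R)$ because ${\rm S}(\mathbb R) \subset {\rm QS}(\mathbb R)$. If $h \in G$, then $h^{-1} \in {\rm S}(\mathbb R)$ by item $(1)$, while $h^{-1}$ is uniformly continuous by the definition of $G$ and $(h^{-1})^{-1} = h$ is uniformly continuous likewise; thus $h^{-1} \in G$. If $g, h \in G$, then $g \circ h \in {\rm S}(\mathbb R)$ by item $(2)$, and both $g \circ h$ and $(g \circ h)^{-1} = h^{-1} \circ g^{-1}$ are uniformly continuous as compositions of uniformly continuous maps; thus $g \circ h \in G$. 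Therefore $G$ is a subgroup of ${\rm QS}(\mathbb R)$.

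Since every step is a direct invocation of Theorem \ref{thm1} together with the elementary fact that uniform continuity passes to compositions, there is no genuine obstacle here. The only point requiring attention is that, unlike the symmetric property itself, uniform continuity of $h$ does not pass to $h^{-1}$; this is precisely why the subgroup $G$ must be defined by imposing uniform continuity on both $h$ and $h^{-1}$, and it is what makes item $(2)$ depend on item $(1)$ rather than following from Theorem \ref{thm1} in one stroke.
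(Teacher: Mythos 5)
Your proof is correct and matches the paper's intent: the paper states this corollary as an immediate consequence of Theorem \ref{thm1} without writing out the details, and your derivation (taking $g=\mathrm{id}$ for item (1), then applying the theorem to $g$ and $h^{-1}$ for item (2), then the routine subgroup checks) is exactly the intended argument.
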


Thus, the group of those $h \in {\rm S}(\mathbb{R})$ with both $h$ and $h^{-1}$ being uniformly continuous acts on 
the Teichm\"uller space $T_0(\mathbb R)$.

\section{The chain rule of complex dilatations: Proof of Theorem \ref{biLipschitz}}

The idea of the proof of Theorem \ref{biLipschitz} originally appeared in 
Cui and Zinsmeister \cite[Lemma 10]{CZ} (and in Semmes \cite[lemma 4.8]{Se} for a different statement),
and we supply necessary ingredients for it to make our proof 
more understandable. These include stability of quasi-geodesics, the Carleson embedding theorem, 
and certain properties of $A_\infty$-weights in
the Muckenhoupt theory. 

First, we prepare two lemmas for the proof of this theorem.
For every $z=(x,y) \in \mathbb U$, we define a closed interval $I_z \subset \mathbb R$ by
$I_z=[x-y,x+y]$.
Conversely,
for a closed interval $I=[a,b] \subset \mathbb R$, we define a point $q(I) \in \mathbb U$ by
$q(I)=(\frac{a+b}{2},\frac{b-a}{2})$.
Let $\gamma_{a,b}$ denote the hyperbolic geodesic line in the hyperbolic plane $\mathbb U$ joining
two points $a$ and $b$ on the boundary at infinity ${\mathbb R} \cup \{\infty\}$.
Then, 
$q([a,b])$ is the intersection of $\gamma_{a,b}$ and $\gamma_{(a+b)/2,\infty}$.

Hereafter, we use a convenient notation $A \asymp B$, which means that there is a constant $C \geq 1$
satisfying that $C^{-1}A \leq B \leq CA$ uniformly with respect to certain circumstances obvious from the context. 

\begin{lemma}\label{ratio}
Let $F:\mathbb U \to \mathbb U$ be a bi-Lipschitz quasiconformal homeomorphism with respect to the hyperbolic metric
that extends to a quasisymmetric homeomorphism $f:\mathbb R \to \mathbb R$ with $f(\infty)=\infty$.
Then, there is a constant $C \geq 1$ depending only on the bi-Lipschitz constant $L=L(F) \geq 1$ of $F$ such that
$$
C^{-1} \frac{|f(I_z)|}{|I_z|} \leq \frac{{\rm Im}\,F(z)}{{\rm Im}\,z} \leq C \frac{|f(I_z)|}{|I_z|}
$$
for every $z \in \mathbb U$. 
\end{lemma}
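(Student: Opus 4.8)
The plan is to compare both sides of the claimed inequality with a common quantity, namely the hyperbolic distance between a fixed reference point and the point $q(f(I_z))$. First I would set up notation: write $I_z=[x-y,x+y]$, so that $q(I_z)=z=(x,y)$, and let $J=f(I_z)=[f(x-y),f(x+y)]$, with $q(J)\in\mathbb U$ its associated point. The elementary observation is that, for any bounded interval $I=[a,b]$, the ratio $|I|/|I'|$ for two intervals sharing an endpoint (or more generally the ``shape'' of $I$ relative to a reference interval) is comparable to an exponential of the hyperbolic distance $d_{\mathbb U}(q(I),q(I'))$; in particular $d_{\mathbb U}(q(I_z),q(f(I_z)))$ controls $\log(|f(I_z)|/|I_z|)$ up to additive constants, once we also keep track of horizontal position. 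More precisely, the vertical coordinate of $q(J)$ is exactly $|J|/2=|f(I_z)|/2$, while the vertical coordinate of $z=q(I_z)$ is $y=|I_z|/2$, so
\[
\frac{|f(I_z)|}{|I_z|}=\frac{{\rm Im}\,q(f(I_z))}{{\rm Im}\,z}.
\]
Thus the lemma reduces to showing ${\rm Im}\,F(z)\asymp {\rm Im}\,q(f(I_z))$, i.e. that $F(z)$ and $q(f(I_z))$ lie within bounded hyperbolic distance of each other, with the bound depending only on $L$.

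The key step is a quasi-geodesic stability (Morse lemma type) argument. Recall $q(I_z)=z$ is the intersection of the geodesic $\gamma_{x-y,\,x+y}$ with the vertical geodesic $\gamma_{x,\infty}$, and it is (up to a universal additive hyperbolic constant) the point on $\gamma_{x-y,\,x+y}$ closest to the ``top''; equivalently $z$ is within bounded distance of the highest point of the semicircle $\gamma_{x-y,x+y}$. Since $F$ is $L$-bi-Lipschitz for the hyperbolic metric and extends continuously to $f$ with $f(\infty)=\infty$, the image $F(\gamma_{x-y,x+y})$ is an $(L,0)$-quasi-geodesic joining $f(x-y)$ and $f(x+y)$, hence by stability of quasi-geodesics in the hyperbolic plane it stays within a distance $C_0=C_0(L)$ of the actual geodesic $\gamma_{f(x-y),\,f(x+y)}$; likewise $F(\gamma_{x,\infty})$ is within $C_0(L)$ of $\gamma_{f(x),\infty}$ (using $F(x)=f(x)$ and $F(\infty)=\infty$ as ideal endpoints). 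Therefore $F(z)$, being the image of the intersection point $z$, lies within bounded distance of \emph{both} $\gamma_{f(x-y),f(x+y)}$ and $\gamma_{f(x),\infty}$, and an elementary hyperbolic-geometry fact (two geodesics that come within bounded distance of each other, and whose endpoints on one side are separated in the appropriate way, have their ``crossing region'' of bounded diameter) forces $F(z)$ to be within bounded distance of the intersection point of those two geodesics, which is exactly $q([f(x-y),f(x+y)])=q(f(I_z))$. This gives $d_{\mathbb U}(F(z),q(f(I_z)))\le C(L)$, and comparing imaginary parts of two points at bounded hyperbolic distance yields ${\rm Im}\,F(z)\asymp{\rm Im}\,q(f(I_z))$ with constant depending only on $L$, completing the proof.

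I expect the main obstacle to be the hyperbolic-geometry bookkeeping in the last part: making precise that the intersection point of two geodesics is stable under the perturbation ``each geodesic is replaced by something within bounded distance of it,'' and checking that this does not degenerate (the two geodesics $\gamma_{f(x-y),f(x+y)}$ and $\gamma_{f(x),\infty}$ genuinely cross transversally, with a crossing angle that is bounded below purely because $f(x)$ lies strictly between $f(x-y)$ and $f(x+y)$ — here one may want the quasisymmetry of $f$ only to rule out the endpoints colliding, but in fact no quantitative input beyond the ordering is needed for the angle bound, since $\gamma_{f(x),\infty}$ is vertical and separates the two boundary points $f(x\pm y)$). A clean way to dispatch this is to first normalize by a Möbius transformation of $\mathbb U$ fixing $\infty$ (i.e. an affine map of $\mathbb R$, under which both $F$ and $f$ transform equivariantly and the bi-Lipschitz constant is unchanged) so that $f(x)=0$ and $|f(I_z)|=2$, reducing to showing $F(z)$ lies in a fixed compact subset of $\mathbb U$ depending only on $L$; this is then a direct consequence of $F$ being $L$-bi-Lipschitz together with quasi-geodesic stability applied to the two normalized geodesics. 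The rest (Carleson embedding, $A_\infty$ properties) announced in the section introduction is not needed for this particular lemma.
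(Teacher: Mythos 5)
Your overall strategy is the same as the paper's: reduce to showing $d_{\mathbb U}(F(z),q(f(I_z)))\le C(L)$ via stability of quasi-geodesics, then convert bounded hyperbolic distance into comparability of imaginary parts. However, there is a genuine error at the crux of your argument. The intersection of $\gamma_{f(x-y),f(x+y)}$ with $\gamma_{f(x),\infty}$ is \emph{not} ``exactly $q([f(x-y),f(x+y)])$.'' By definition, $q(f(I_z))$ is the intersection of $\gamma_{f(x-y),f(x+y)}$ with the vertical geodesic over the \emph{midpoint} $\tfrac{1}{2}\bigl(f(x-y)+f(x+y)\bigr)$ of the image interval, whereas your second quasi-geodesic limits on the vertical over $f(x)$, the image of the midpoint of $I_z$. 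Since $f$ need not preserve midpoints, these are two different points on the same semicircle, and their hyperbolic distance is comparable to $\bigl|\log\frac{f(x+y)-f(x)}{f(x)-f(x-y)}\bigr|$. Your parenthetical claim that ``no quantitative input beyond the ordering is needed'' is therefore false on two counts: without a quasisymmetry bound this distance is unbounded, and moreover the crossing angle of $\gamma_{f(x),\infty}$ with $\gamma_{f(x-y),f(x+y)}$ degenerates to $0$ as $f(x)$ approaches an endpoint relative to $|f(I_z)|$ (the angle is a conformal invariant of the four ideal endpoints), so your ``bounded crossing region'' step and your compactness normalization both fail in that regime.

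The repair is exactly the quantitative input you tried to avoid, and it is what the paper uses: since $F$ is an $L$-bi-Lipschitz quasiconformal self-map of $\mathbb U$, its boundary map $f$ is $M$-quasisymmetric with $M=M(L)$, so $M^{-1}\le\frac{f(x+y)-f(x)}{f(x)-f(x-y)}\le M$. This bounds the hyperbolic distance between $\gamma_{f(x-y),f(x+y)}\cap\gamma_{f(x),\infty}$ and $q(f(I_z))$ by a constant depending only on $L$, and simultaneously gives the lower bound on the crossing angle needed to localize $F(z)$ near that intersection. With this extra step inserted, your argument closes and coincides with the paper's proof.
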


\begin{proof}
For $I_z=[a,b]$, the point $z$ is the intersection of the geodesic lines $\gamma_{a,b}$ and $\gamma_{(a+b)/2,\infty}$.
Then, $F(z)$ is the intersection of the quasi-geodesic lines $F(\gamma_{a,b})$ and $F(\gamma_{(a+b)/2,\infty})$.
By the {\it stability of quasi-geodesics} (see \cite[p.401]{BH} and \cite[p.41]{CDP}), 
we see that $F(\gamma_{a,b})$ is within a bounded hyperbolic distance of
$\gamma_{f(a),f(b)}$ and that $F(\gamma_{(a+b)/2,\infty})$ is within a bounded hyperbolic distance of $\gamma_{f((a+b)/2),\infty}$,
where the bounds depend only on the bi-Lipschitz constant $L$. 
This shows that the hyperbolic distance between the intersections $F(z)$ and $\gamma_{f(a),f(b)} \cap \gamma_{f((a+b)/2),\infty}$ is bounded from above by a
constant depending only on $L$. 

On the other hand, by the quasisymmetry of $f$ on $\mathbb{R}$, 
there exists some constant $M \geq 1$ which also depends only on $L$ such that 
$$
M^{-1} \leq \frac{f(b)-f((a+b)/2)}{f((a+b)/2)-f(a)} \leq M.
$$
This shows that the hyperbolic distance between
the intersections $\gamma_{f(a),f(b)} \cap \gamma_{f((a+b)/2),\infty}$ and
$\gamma_{f(a),f(b)} \cap \gamma_{(f(a)+f(b))/2,\infty}=q(f(I_z))$ is bounded from above by a
constant depending only on $L$. 

Combining the boundedness from above of the two hyperbolic distances mentioned above, we conclude that the hyperbolic distance between $F(z)$ and $q(f(I_z))$ is bounded from above by a
constant depending only on $L$. By the formula of the hyperbolic distance $d_{H}$ on the upper half-plane $\mathbb{U}$, for any two points $z, w \in \mathbb{U}$, it holds that
\begin{equation*}
    \sinh\left(\frac{d_H(z, w)}{2}\right) = \frac{|z - w|}{2 ({\rm Im}\,z\, {\rm Im}\,w)^{\frac{1}{2}}}
    \geq \frac{1}{2}\left|\left(\frac{{\rm Im}\,z}{{\rm Im}\,w}\right)^{\frac{1}{2}}-\left(\frac{{\rm Im}\,w}{{\rm Im}\,z}\right)^{\frac{1}{2}}\right|.
\end{equation*}
Thus, 
\begin{equation*}
 C^{-1}{\rm Im}\,F(z) \leq {\rm Im}\, q(f(I_z)) \leq C {\rm Im}\,F(z)
\end{equation*}
for some constant $C \geq 1$ depending only on $L$.
Consequently, we have
$$
C^{-1}\frac{{\rm Im}\, F(z)}{{\rm Im}\,z} \leq \frac{|f(I_z)|}{|I_z|} =\frac{{\rm Im}\, q(f(I_z))}{{\rm Im}\,z} \leq C \frac{{\rm Im}\, F(z)}{{\rm Im}\,z},
$$
which is the required inequalities.
\end{proof}

\begin{lemma}\label{box}
Let $F:\mathbb U \to \mathbb U$ and $f:\mathbb R \to \mathbb R$ be as in Lemma \ref{ratio}.
There is a constant $\alpha \geq 1$ depending only on the bi-Lipschitz constant $L=L(F)$ such that
for every bounded closed interval $I \subset \mathbb R$, the image $F(Q_I)$ of the Carleson box 
$Q_I=I \times (0,|I|] \subset \mathbb U$ is contained in the Carleson box $Q_{\alpha f(I)}$
associated with the interval $\alpha f(I) \subset \mathbb R$ with the same center as $f(I)$ and with length $|\alpha f(I)|=\alpha |f(I)|$.
\end{lemma}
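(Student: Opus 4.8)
The plan is to derive Lemma \ref{box} from the pointwise comparison already contained in the proof of Lemma \ref{ratio}, namely that $F(z)$ stays within a bounded hyperbolic distance of $q(f(I_z))$. Fix a bounded closed interval $I=[a,b]$ and a point $z=x+iy\in Q_I$, so $x\in I$ and $0<y\le|I|$. First I would observe that the footprint $I_z=[x-y,\,x+y]$ satisfies $I_z\subseteq 3I$, where $3I$ is the interval concentric with $I$ of length $3|I|$, because $a-|I|\le x-y$ and $x+y\le b+|I|$. Since $F$ is $L$-bi-Lipschitz, its boundary map $f$ is quasisymmetric with constant controlled by $L$ (this is exactly what is used in the proof of Lemma \ref{ratio}), hence the measure $E\mapsto|f(E)|$ is doubling with constant depending only on $L$; thus $|f(I_z)|\le|f(3I)|\le M_0|f(I)|$ for some $M_0=M_0(L)\ge1$, and, because $f(I)\subseteq f(3I)$, the interval $f(3I)$ — and in particular $f(I_z)$ — lies in the interval concentric with $f(I)$ of length $(2M_0-1)|f(I)|$.

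Next I would bring in the key estimate established within the proof of Lemma \ref{ratio}: there is a constant $D=D(L)$ with $d_H\!\big(F(z),q(f(I_z))\big)\le D$ for every $z\in\mathbb U$. Feeding this into the distance formula $\sinh\!\big(d_H(z,w)/2\big)=|z-w|/\big(2({\rm Im}\,z\,{\rm Im}\,w)^{1/2}\big)$ gives simultaneously ${\rm Im}\,F(z)\le e^{D}\,{\rm Im}\,q(f(I_z))$ and $|F(z)-q(f(I_z))|\le c(L)\,{\rm Im}\,q(f(I_z))$ for a constant $c(L)$. Since ${\rm Im}\,q(f(I_z))=\tfrac12|f(I_z)|$ and ${\rm Re}\,q(f(I_z))$ is the midpoint of $f(I_z)$, combining with the previous paragraph yields ${\rm Im}\,F(z)\le \tfrac12 e^{D}M_0\,|f(I)|$, while ${\rm Re}\,F(z)$ lies within $\tfrac12\big(c(L)M_0+2M_0-1\big)|f(I)|$ of the center of $f(I)$.

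Finally, choosing $\alpha=\alpha(L)$ at least $\max\{e^{D}M_0,\ c(L)M_0+2M_0-1\}$, the two bounds above say precisely that $F(z)$ lies in the Carleson box $Q_{\alpha f(I)}$ over the interval $\alpha f(I)$ concentric with $f(I)$ and of length $\alpha|f(I)|$; since $z\in Q_I$ was arbitrary, $F(Q_I)\subseteq Q_{\alpha f(I)}$. The step I expect to be the real obstacle is the \emph{horizontal} control: Lemma \ref{ratio} on its own only compares imaginary parts, so one genuinely needs the sharper fact that $F(z)$ is hyperbolically \emph{close} to $q(f(I_z))$ — not merely of comparable height — together with the passage from a bounded hyperbolic distance to a Euclidean displacement that is small \emph{relative to} the height $|f(I_z)|$. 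A minor but essential bookkeeping point, needed so that $\alpha$ depends on $L$ alone, is the already-used fact that the quasisymmetry, and hence the doubling, constant of $f$ is controlled by $L$.
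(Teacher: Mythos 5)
Your proof is correct. The key observation you rely on --- that the proof (not just the statement) of Lemma \ref{ratio} yields $d_H\bigl(F(z),q(f(I_z))\bigr)\le D(L)$ for every $z\in\mathbb U$, which then controls both ${\rm Im}\,F(z)$ and the horizontal displacement $|{\rm Re}\,F(z)-{\rm Re}\,q(f(I_z))|$ up to a multiple of $|f(I_z)|$ --- is sound, and your bookkeeping ($I_z\subseteq 3I$ for all $z\in Q_I$, doubling of $|f(\cdot)|$ with constant depending only on $L$, and the conversion of a bounded hyperbolic distance into a Euclidean displacement comparable to the height) is accurate.

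The paper organizes the argument differently, though the underlying tool (stability of quasi-geodesics) is the same. It bounds only the image of the boundary of $Q_I$: the top side $I\times\{|I|\}$ is handled via the statement of Lemma \ref{ratio} (giving ${\rm Im}\,F(z)\lesssim |f(I)|$ there), and the two lateral sides are handled by a separate application of quasi-geodesic stability, showing they map near the vertical geodesics over the endpoints of $f(I)$; the containment $F(Q_I)\subseteq Q_{\alpha f(I)}$ then follows because $F$ is a homeomorphism, so the image of the box is enclosed by the image of its boundary. Your version instead gives a single pointwise estimate valid on all of $Q_I$, which buys you two things: you never need the (implicit) topological step about boundaries enclosing interiors, and you make explicit the horizontal control that the statement of Lemma \ref{ratio} alone does not provide --- precisely the point you correctly flag as the real content of the lemma. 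The only cost is that you must cite an intermediate estimate from inside the proof of Lemma \ref{ratio} rather than its statement; it would be cleaner to record that estimate as a displayed inequality in Lemma \ref{ratio} so it can be referenced directly.
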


\begin{proof}
We take any point $z$ on the upper side $I \times \{|I|\}$ of the Carleson box $Q_I$ and consider $F(z)$.
Lemma \ref{ratio} shows that
$$
{\rm Im}\, F(z) \asymp \frac{{\rm Im}\, z \cdot|f(I_z)|}{|I_z|}=\frac{1}{2} |f(I_z)|
$$
is satisfied with the comparability constant $C \geq 1$ depending only on $L$.
Since $I \subset I_z \subset 3I$, the quasisymmetry of $f$ with a constant $M \geq 1$ depending only on $L$ implies that 
$|f(I_z)| \leq (2M+1)|f(I)|$. Hence, ${\rm Im}\, F(z)$ is bounded by $C(M+\frac{1}{2})|f(I)|$.

By the stability of quasi-geodesics, the images of the left and the right sides of $Q_I$ under $F$ are
within a bounded hyperbolic distance of the hyperbolic geodesic lines towards $\infty$ from the left and
the right end points of the interval $f(I)$, respectively.
Combining this with the above estimate on the image of the upper side of $Q_I$, we can find the required constant
$\alpha \geq 1$ depending only on $L$ such that $F(Q_I) \subset Q_{\alpha f(I)}$.
\end{proof}

\begin{proof}[Proof of Theorem \ref{biLipschitz}]
Statement (1) for the case of $G={\rm id}$ was proved by Cui and Zinsmeister \cite[Lemma 10]{CZ}. 
We will give the proof of the other statements. 
Our argument also gives a detailed exposition of their proof simultaneously. 
We remark in advance that in virtue of Proposition \ref{SQSext} we can use
the strongly quasisymmetric properties of the boundary extension $h$ of $H$ to $\mathbb R$ in the proof.

Let $I \subset \mathbb{R}$ be a bounded closed interval and $Q_I = I \times (0, |I|]$ the associated Carleson box. Then, 
by change of variables $\zeta=H(z)$, we obtain that
\begin{equation*}
    \begin{split}
        \mathcal{I} & =  \iint_{Q_I} |\mu_{G \circ H^{-1}}(\zeta)|^2 \rho_{\mathbb{U}}(\zeta)d\xi d\eta\\
        & = \iint_{H^{-1}(Q_I)}  \left|\frac{\mu(z) - \nu(z)}{1-\overline{\nu(z)}\mu(z)}\right|^2\rho_{\mathbb{U}}(H(z)) J_{H}(z) dxdy ,\\
        \end{split}
\end{equation*}
where $J_{H}$ denotes the Jacobian of $H$. We set $J=h^{-1}(I)$, where $h$ is the boundary extension of $H$ to $\mathbb R$.
By Lemma \ref{box}, there is a constant $\alpha \geq 1$ depending only on the bi-Lipschitz constant
$L=L(H)=L(H^{-1}) \geq 1$ such that $H^{-1}(Q_I) \subset Q_{\alpha J}$,
where $\alpha J$ is the interval with the same center as $J$, but with length $\alpha|J|$. Then, for some constant $K>0$
depending only on $\Vert \mu \Vert_\infty$ and $\Vert \nu \Vert_\infty$, we have that
\begin{equation*}
\mathcal{I} \leq K \iint_{Q_{\alpha J}} |\mu(z)-\nu(z)|^2\rho_{\mathbb{U}}(z)\frac{{\rm Im}\,z}{{\rm Im}\,H(z)}J_{H}(z) dxdy.
\end{equation*}

Since $H$ is bi-Lipschitz with respect to the hyperbolic metric, we see that
$$
\left(\frac{{\rm Im}\,z}{{\rm Im}\,H(z)}\right)^2J_{H}(z) \asymp 1. 
$$
Moreover, Lemma \ref{ratio} implies that
$$
\frac{{\rm Im}\,z}{{\rm Im}\,H(z)} \asymp \frac{|I_z|}{|h(I_z)|}.
$$
Both comparabilities as above are given by constants depending only on the bi-Lipschitz constant $L$. 
Hence, there is a constant $\widetilde L \geq 1$
depending only on $L$ such that
\begin{equation*}
\widetilde L^{-1}\frac{|h(I_z)|}{|I_z|} \leq \frac{{\rm Im}\,z}{{\rm Im}\,H(z)}J_{H}(z) \leq \widetilde L\, \frac{|h(I_z)|}{|I_z|} 
\end{equation*}
for every $z \in \mathbb U$. By setting $\widetilde \omega(z)=|h(I_z)|/|I_z|$, we have
\begin{equation*}
\mathcal{I} \leq K \widetilde L \iint_{Q_{\alpha J}} \widetilde \omega(z) |\mu(z)-\nu(z)|^2\rho_{\mathbb{U}}(z) dxdy. 
\end{equation*}

Now we apply the following {\it Carleson embedding theorem} (see \cite[Theorem 1.57]{Saw}). 
It says that assuming $\lambda \in {\rm CM}(\mathbb U)$ is a Carleson measure on the upper half-plane $\mathbb{U}$ and $F: \mathbb U \to [0, \infty)$ is a non-negative Borel measurable function in general, we have
\begin{equation*}
    \iint_{\mathbb{U}}F(z) d\lambda(z) \leq A \Vert \lambda \Vert_{c}\int_{\mathbb{R}}F^{\ast}(t) dt,
\end{equation*}
where $F^{\ast}(t) = \sup_{z \in \Gamma(t)}F(z)$ denotes the non-tangential maximal function of $F$ at $t \in \mathbb{R}$ with 
a cone $\Gamma(t) = \{z=(x,y) \in \mathbb{U}\mid |x-t| \leq y\}$, and $A>0$ is an absolute constant. 

By the assumption $\mu, \nu \in \mathcal M(\mathbb U)$, the measure $\lambda_{\mu-\nu}$ defined by
$$
d\lambda_{\mu-\nu}=|\mu(z)-\nu(z)|^2\rho_{\mathbb{U}}(z)dxdy
$$ 
belongs to $\rm CM(\mathbb{U})$.
Then, we obtain that
\begin{equation*}
\begin{split}
\mathcal{I} & \leq K \widetilde L \iint_{\mathbb{U}} \widetilde \omega(z)1_{Q_{\alpha J}}(z) 
|\mu(z)-\nu(z)|^2\rho_{\mathbb{U}}(z) dxdy  \\
& \leq AK \widetilde L\,\Vert \lambda_{\mu-\nu} 1_{Q_{\alpha J}} \Vert_c \int_{\mathbb{R}} (\widetilde \omega 1_{Q_{\alpha J}})^{\ast}(t) dt = C(J)\int_{3\alpha J} (\widetilde \omega 1_{Q_{\alpha J}})^{\ast}(t) dt,\\
\end{split}
\end{equation*}
where the constant $C(J)=AK \widetilde L\,\Vert \lambda_{\mu-\nu} 1_{Q_{\alpha J}} \Vert_c>0$ depends also on 
the interval $J$, but is bounded due to $\lambda_{\mu-\nu} \in \rm CM(\mathbb{U})$.

The boundary extension $h$ of $H$ to $\mathbb R$ is strongly quasisymmetric by Proposition \ref{SQSext}.
We consider the $A_\infty$-weight $\omega=h'$ on $\mathbb R$ and set
$\varphi = \omega 1_{3\alpha J}$. Let 
$$
M\varphi(t) = \sup_{t \in I}\frac{1}{|I|}\int_{I}\varphi(s) ds
$$
denote the Hardy--Littlewood (uncentered) {\it maximal function} of $\varphi$. 
Then, we can show that $(\widetilde \omega 1_{Q_{\alpha J}})^{\ast}(t)\leq M\varphi(t)$ 
for any $t\in 3 \alpha J$. Indeed, $t \in I_z \subset 3 \alpha J$ for every $z \in \Gamma(t) \cap Q_{\alpha J}$, and hence
\begin{align*}
(\widetilde \omega 1_{Q_{\alpha J}})^{\ast}(t) =\sup_{z \in \Gamma(t) \cap Q_{\alpha J}} \widetilde \omega(z)
&=\sup_{z \in \Gamma(t) \cap Q_{\alpha J}}\frac{|h(I_z)|}{|I_z|}\\
&\leq \sup_{t \in I_z \subset 3 \alpha J} \frac{1}{|I_z|}\int_{I_z} \omega(t)dt \leq M\varphi(t).
\end{align*}
Therefore, 
$$
\mathcal{I} \leq C(J)\int_{3\alpha J} M\varphi(t) dt.
$$

By the {\it reverse H\"older inequality} in the Muckenhoupt theory (see \cite[Lemma 5]{CF}), 
there exist $C>0$ and $p >1$ such that for any bounded closed
interval $I \subset \mathbb R$, we have
\begin{equation*}
    \frac{1}{|I|}\int_{I}\omega(t)^p dt \leq C\left(\frac{1}{|I|}\int_{I}\omega(t) dt\right)^p.
\end{equation*}
It follows that 
\begin{equation*}
    \begin{split}
        \mathcal{I} & \leq C(J)|3\alpha J|^{\frac{1}{p'}} \left(\int_{3\alpha J} (M\varphi)^p\right)^{\frac{1}{p}} 
   \leq C_1(J)|3 \alpha J|^{\frac{1}{p'}} \left(\int_{\mathbb{R}} \varphi^p\right)^{\frac{1}{p}} \\
         & = C_1(J)|3\alpha J|^{\frac{1}{p'}} \left(\int_{3\alpha J} \omega^p\right)^{\frac{1}{p}} 
         \leq C_2(J)\int_{3\alpha J}\omega = C_2(J)|h(3\alpha J)| \\
    \end{split}
\end{equation*}
for $1/p+1/p'=1$,
where $C_1(J)$ and $C_2(J)$ are some constant multiples of $C(J)$. We have used the H\"older inequality in the first inequality,
the strong $L^p$-estimate for maximal functions (see \cite[Theorem I]{CF}, \cite[Theorem 4.3]{Ga}) in the second inequality, and
the reverse H\"older inequality mentioned above 
in the last inequality.

Finally, we estimate $|h(3\alpha J)|$. Let $M \geq 1$ be the quasisymmetry constant of $h$.
Then, 
$$
|h(3\alpha J)| \leq \frac{M^{3\alpha}-1}{M-1}|h(J)|=\widetilde M|I|,
$$
where the constant involving $M$ and $\alpha$ is replaced with $\widetilde M \geq 1$.
This yields an estimate 
$$
\frac{\mathcal{I}}{|I|} =\frac{1}{|I|}\iint_{Q_I} |\mu_{G \circ H^{-1}}(\zeta)|^2 \rho_{\mathbb{U}}(\zeta)d\xi d\eta\leq \widetilde M C_2(J).
$$

Since $C_2(J)$ is bounded independent of $J$, we see that $\mu_{G \circ H^{-1}}$ belongs to $\mathcal M(\mathbb U)$.
This proves statement (1). We further assume that
$\mu, \nu \in \mathcal M_0(\mathbb U)$. Then, 
$\lambda_{\mu-\nu} \in \rm CM_0(\mathbb{U})$.
Since
$$
C_2(J) \asymp \Vert \lambda_{\mu-\nu} 1_{Q_{\alpha J}} \Vert_c,
$$
this tends uniformly to $0$ as $|J| \to 0$. 
Moreover, by the assumption that $h^{-1}$ is uniformly continuous on $\mathbb R$,
we see that $|J| \to 0$ uniformly as $|I| \to 0$. 
This shows that $\mu_{G \circ H^{-1}}$ belongs to $\mathcal M_0(\mathbb U)$, which proves statement (2).
\end{proof}

\section{The barycentric extension: Proof of Theorem \ref{DEonR}}

In this section, after introducing the barycentric extension,
we prove Theorem \ref{DEonR}, and provide a complex-variable proof for Theorem \ref{thm2}. 
We also show the fact that the group ${\rm SS}_{\rm uc}$ acts on the 
Teichm\"uller space $T_v(\mathbb R)$ as biholomorphic automorphisms.

We first recall the {\it barycentric extension} for a homeomorphism of the unit circle $\mathbb S$ 
introduced by Douady and Earle \cite{DE}, and then we translate it to the setting of the real line $\mathbb R$. 

For $\varphi \in {\rm QS}(\mathbb S)$,
the average of $\varphi$ taken at $w \in \mathbb D$ is defined by
$$
\xi_\varphi(w)=\frac{1}{2\pi} \int_{\mathbb S}\gamma_w(\varphi(\zeta))|d\zeta|
$$
where the M\"obius transformation 
$$
\gamma_w(z)=\frac{z-w}{1-\bar w z} \in \mbox{\rm{M\"ob}}(\mathbb D)
$$
maps $w$ to the origin $0$.
The barycenter of $\varphi$ is the unique point $w_0 \in \mathbb D$ such that $\xi_\varphi(w_0)=0$.
The value of the barycentric extension $e(\varphi)$ at the origin $0$ is defined to be the barycenter $w_0=w_0(\varphi)$, that is,
$e(\varphi)(0)=w_0(\varphi)$.
For an arbitrary point $z \in \mathbb D$, the barycentric extension $e(\varphi)$ is defined by
$$
{e(\varphi)(z)=e(\varphi \circ \gamma_z^{-1})(0)},
$$
which satisfies the {\it conformal naturality} such that
$$
{e(\gamma_1 \circ \varphi \circ \gamma_2)=e(\gamma_1) \circ e(\varphi) \circ e(\gamma_2)}
$$
for any $\gamma_1, \gamma_2 \in \mbox{\rm{M\"ob}}(\mathbb S)$. Moreover, $e(\varphi)$ is a quasiconformal 
diffeomorphism of $\mathbb D$ onto itself and is even bi-Lipschitz under the hyperbolic metric on $\mathbb D$. 

We fix a Cayley transformation $T:\mathbb{U} \to \mathbb{D}$ given by
$w = T(z) = (z - i)/(z + i)$. 
For a quasisymmetric homeomorphism $h \in {\rm QS}(\mathbb R)$, we
set $\varphi = T \circ h \circ T^{-1}$, and define $e(h) =T^{-1} \circ e(\varphi) \circ T$, where $e(\varphi)$ is the barycentric extension of $\varphi \in {\rm QS}(\mathbb S)$. 
We also call $e(h)$ the barycentric extension of $h$. The complex dilatation of $e(h)$
is denoted by $\mu_{e(h)}$. 
The barycentric extension $e(h)$ on $\mathbb U$ also satisfies the conformal naturality and bi-Lipschitz continuity 
as in the case of $\mathbb D$.

Suppose that $\varphi \in {\rm QS}(\mathbb{S})$ has a quasiconformal extension to $\mathbb{D}$ with complex dilatation $\mu$.  We denote by $e^{-1}(\varphi^{-1})$ the inverse mapping of the barycentric extension
$e(\varphi^{-1})$. By checking the proof of \cite[Corollary 3.4]{TWS}, we see that 
$\mu_{e^{-1}(\varphi^{-1})} \in \mathcal{M}(\mathbb{D})$ if  $\mu \in \mathcal{M}(\mathbb{D})$, and 
$\mu_{e^{-1}(\varphi^{-1})} \in \mathcal{M}_{0}(\mathbb{D})$ if $\mu \in \mathcal{M}_0(\mathbb{D})$. 
Furthermore, the results on the barycentric extension $e(\varphi)$ itself were deduced from this
by using \cite[Lemma 10]{AZ} or \cite[Lemme 4.8]{Se} (which are generalized to Theorem \ref{biLipschitz}). Namely, 
$\mu_{e(\varphi)} \in {\mathcal M}(\mathbb{D})$ if $\mu \in {\mathcal M}(\mathbb{D})$, and
$\mu_{e(\varphi)} \in {\mathcal M}_0(\mathbb{D})$ if $\mu \in {\mathcal M}_0(\mathbb{D})$.
This claim is also true for ${\mathcal M}(\mathbb{U})$ on the upper half-plane $\mathbb U$ by the conformal invariance, but is
not known to be true for ${\mathcal M}_0(\mathbb{U})$.
The former result on $\mu_{e^{-1}(\varphi^{-1})}$ was translated to the setting on $\mathbb U$ 
in \cite[Theorem 4.3]{Sh19} as follows.

\begin{lemma}\label{DEU}
Let $H$ be a quasiconformal homeomorphism of $\mathbb U$ onto itself whose complex dilatation 
is in ${\mathcal M}_0(\mathbb{U})$. Then, for the boundary extension $h$ of $H$ to $\mathbb R$,
the complex dilatation $\mu_{e^{-1}(h^{-1})}$ is also in ${\mathcal M}_0(\mathbb{U})$.
\end{lemma}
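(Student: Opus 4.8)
The plan is to transfer the disk statement recalled just above --- that $\mu_{e^{-1}(\varphi^{-1})}\in\mathcal M_0(\mathbb D)$ whenever $\varphi\in{\rm QS}(\mathbb S)$ has a quasiconformal extension with dilatation in $\mathcal M_0(\mathbb D)$ --- to the half-plane along the Cayley transformation $T:\mathbb U\to\mathbb D$, which is the route taken in \cite[Theorem 4.3]{Sh19}. Set $\varphi=T\circ h\circ T^{-1}\in{\rm QS}(\mathbb S)$. By the very definition of the barycentric extension on the line, $e(h^{-1})=T^{-1}\circ e(\varphi^{-1})\circ T$, hence
$$
e^{-1}(h^{-1})=T^{-1}\circ e^{-1}(\varphi^{-1})\circ T .
$$
Since $|\mu|$ is unchanged under conjugation by a conformal map and the hyperbolic area element is M\"obius invariant, the measure $d\lambda_{\mu_{e^{-1}(h^{-1})}}$ is carried onto $d\lambda_{\mu_{e^{-1}(\varphi^{-1})}}$ by $T$; likewise $T\circ H\circ T^{-1}$ is a quasiconformal extension of $\varphi$ whose dilatation $\mu'$ corresponds to $\mu_H$ under $T$. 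By Proposition \ref{SSext} we also know $h\in{\rm SS}(\mathbb R)$, but note that $\varphi$ need not lie in ${\rm SS}(\mathbb S)$ and $h^{-1}$ need not even be strongly quasisymmetric, which is exactly why the group-theoretic shortcut available on $\mathbb S$ fails here.

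The one place requiring care is the correspondence between $\mathcal M_0(\mathbb U)$ and $\mathcal M_0(\mathbb D)$ under $T$. A Carleson box $Q_I$ over a bounded interval $I\subset\mathbb R$ is mapped by $T$ to a region comparable to the Carleson box over the arc $T(I)$, and $|I|\to0$ forces $|T(I)|\to0$; thus $\mu_H\in\mathcal M_0(\mathbb U)$ makes $\mu'$ a Carleson measure on $\mathbb D$ that is vanishing Carleson near every point of $\mathbb S\setminus\{1\}$ --- with uniformity on families of arcs that stay away from $1=T(\infty)$ at the appropriate rate --- but possibly not near $1$, this being precisely the fact that $\mathcal M_0(\mathbb U)$ imposes nothing on Carleson boxes whose base interval escapes to $\infty$. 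Correspondingly, to conclude $\mu_{e^{-1}(h^{-1})}\in\mathcal M_0(\mathbb U)$ it suffices that its $T$-image be vanishing Carleson near every point of $\mathbb S\setminus\{1\}$ with the matching uniformity. Because $h$ fixes $\infty$, the circle homeomorphisms $\varphi$ and $\varphi^{-1}$ fix $1$ and hence preserve $\mathbb S\setminus\{1\}$: the singular point is not moved. The heart of the argument is therefore a localized form of the disk result --- if $\mu'$ is vanishing Carleson near $\zeta\in\mathbb S$ then $\mu_{e^{-1}(\varphi^{-1})}$ is vanishing Carleson near $\varphi^{-1}(\zeta)$, quantitatively in the relevant moduli of continuity --- from which the behavior of $\mu'$ near $1$ affects $\mu_{e^{-1}(\varphi^{-1})}$ only near $1$, i.e. affects $\mu_{e^{-1}(h^{-1})}$ only near $\infty$, which is irrelevant for membership in $\mathcal M_0(\mathbb U)$.

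The main obstacle is exactly establishing this localization, since the barycentric extension, though conformally natural, is not a local operator: $e(\varphi^{-1})(z)$ depends on all of $\varphi^{-1}$. One must revisit the proof of \cite[Corollary 3.4]{TWS} --- which rests on the disk analogue of Theorem \ref{biLipschitz}, namely \cite[Lemma 10]{AZ} or \cite[Lemme 4.8]{Se} --- and verify that every estimate there is carried out over bounded Carleson boxes with constants uniform in the location of the box, so that shrinking an interval $I\subset\mathbb R$ yields uniform decay of $\lambda_{\mu_{e^{-1}(h^{-1})}}(Q_I)/|I|$; this is conveniently done after the affine renormalization $A_I(z)=(z-c_I)/r_I$ of each $Q_I=Q_{[c_I-r_I,\,c_I+r_I]}$, a hyperbolic isometry preserving the ratio $\lambda(Q_I)/|I|$ which by conformal naturality conjugates $e^{-1}(h^{-1})$ into $e^{-1}(h_I^{-1})$ for the rescaled map $h_I=A_I\circ h\circ A_I^{-1}$, reducing everything to a fixed box at unit scale where the arc image stays away from $1$. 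An alternative avoiding the transfer to $\mathbb D$ would be to run the Carleson-box computation in the proof of Theorem \ref{biLipschitz} directly on $\mathbb U$ with the bi-Lipschitz map $e(h^{-1})$; the catch is that $\mu_{e(h^{-1})}$ is not known to lie in $\mathcal M_0(\mathbb U)$, so one would have to argue, as on $\mathbb S$, by feeding the known good extension $H$ of the boundary map $h$ of $e^{-1}(h^{-1})$ into the estimate rather than a hypothetical good extension of $h^{-1}$.
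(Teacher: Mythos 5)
The paper does not prove this lemma at all: it is quoted verbatim from \cite[Theorem 4.3]{Sh19}, so the only ``official'' argument here is a citation. Your proposal instead tries to derive the half-plane statement from the disk statement of \cite[Corollary 3.4]{TWS} by conjugating with the Cayley transformation, and you correctly diagnose why this is not automatic: $T$ does not carry $\mathcal M_0(\mathbb U)$ onto $\mathcal M_0(\mathbb D)$ (this is exactly the content of Section 6), so $\mu'=\mu_{T\circ H\circ T^{-1}}$ is only known to lie in $\mathcal M(\mathbb D)$, with a rate of vanishing of its Carleson ratios that degenerates as the base arc approaches $1=T(\infty)$.

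The genuine gap is that the step you yourself call ``the heart of the argument''---a localized, quantitative version of the disk result transporting a position-dependent vanishing rate for $\lambda_{\mu'}$ near $\zeta$ into one for $\lambda_{\mu_{e^{-1}(\varphi^{-1})}}$ near $\varphi^{-1}(\zeta)$---is never established; you only announce a program (``revisit the proof of \cite[Corollary 3.4]{TWS} and verify uniformity''). Since the barycentric extension is non-local, this step requires the actual kernel estimate bounding $|\mu_{e^{-1}(\varphi^{-1})}(w)|^2$ by a hyperbolically decaying average of $|\mu'|^2$ centered at $w$; without it the lemma is not proved. Moreover, the assertion that the behavior near $\infty$ ``is irrelevant for membership in $\mathcal M_0(\mathbb U)$'' is false as stated: $\mathcal M_0(\mathbb U)$ demands uniform decay over small boxes $Q_I$ with $I$ arbitrarily far out, and these correspond exactly to small sectors near $1$; only the quantitative matching of rates you allude to earlier (not an outright dismissal of the region near $\infty$) can rescue the argument. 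The affine renormalization is a good device, but it does not close the loop either: after rescaling, $\Vert\lambda_{\mu_{H_I}}\Vert_c$ is merely bounded, not small, outside the unit box, so one faces the same localization question at unit scale. (A minor slip: since ${\rm SQS}(\mathbb R)$ is a group, $h^{-1}$ is automatically strongly quasisymmetric; what may fail is $h^{-1}\in{\rm SS}(\mathbb R)$.) The efficient repair is the one the paper takes---invoke \cite[Theorem 4.3]{Sh19}, whose proof runs the barycentric-extension estimates directly on $\mathbb U$ against the given extension $H$, essentially the alternative you sketch in your final sentence but do not develop.
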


This lemma implies that once we obtain such a quasiconformal homeomorphism $H$, we can
replace it with the bi-Lipschitz diffeomorphism given by means of the barycentric extension
with the same boundary extension $h$ as $H$ and with its complex dilatation in the same class ${\mathcal M}_0(\mathbb{U})$ as $H$.
For a given $h \in {\rm SS}(\mathbb R)$, the existence of the appropriate quasiconformal extension $H$
is guaranteed by Proposition \ref{SSext}. Thus, we can prepare the following claim for the proof of Theorem \ref{DEonR}.

\begin{proposition}\label{step}
Any $h \in {\rm SS}(\mathbb R)$ extends continuously to a bi-Lipschitz diffeomorphism $e^{-1}(h^{-1})$ of $\mathbb U$ onto itself
whose complex dilatation belongs to ${\mathcal M}_0(\mathbb{U})$.
\end{proposition}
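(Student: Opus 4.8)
The plan is to chain together the two ingredients that the excerpt has already set up: Proposition \ref{SSext}, which provides \emph{some} quasiconformal extension of $h$ with complex dilatation in $\mathcal{M}_0(\mathbb{U})$, and Lemma \ref{DEU}, which upgrades any such extension to the specific bi-Lipschitz diffeomorphism $e^{-1}(h^{-1})$ without leaving the class $\mathcal{M}_0(\mathbb{U})$. First I would invoke Proposition \ref{SSext}: since $h \in {\rm SS}(\mathbb R)$, there exists a quasiconformal homeomorphism $H_0$ of $\mathbb{U}$ onto itself that extends $h$ continuously to $\mathbb{R}$ and satisfies $\mu_{H_0} \in \mathcal{M}_0(\mathbb{U})$. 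Then I would apply Lemma \ref{DEU} to $H = H_0$: its boundary extension is precisely $h$, so the lemma gives $\mu_{e^{-1}(h^{-1})} \in \mathcal{M}_0(\mathbb{U})$.

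It remains to check the two structural claims about the map $e^{-1}(h^{-1})$ itself, independently of its Beltrami coefficient. First, $e^{-1}(h^{-1})$ is a quasiconformal homeomorphism of $\mathbb{U}$ onto itself: since $h \in {\rm SS}(\mathbb{R}) \subset {\rm QS}(\mathbb{R})$, its inverse $h^{-1}$ is again quasisymmetric, and the barycentric extension $e(h^{-1})$ on $\mathbb{U}$ (transported from $\mathbb{D}$ via the Cayley transformation $T$ as described in Section 5) is a quasiconformal diffeomorphism of $\mathbb{U}$ onto itself that is bi-Lipschitz in the hyperbolic metric; hence its inverse $e^{-1}(h^{-1})$ enjoys the same properties, the bi-Lipschitz constant being unchanged under inversion. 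Second, the boundary extension of $e^{-1}(h^{-1})$ to $\mathbb{R}$ is $h$: the barycentric extension $e(h^{-1})$ extends $h^{-1}$ continuously to $\partial \mathbb{U}$, so its inverse extends $(h^{-1})^{-1} = h$. Thus $e^{-1}(h^{-1})$ is a bi-Lipschitz diffeomorphism of $\mathbb{U}$ extending $h$, and by the previous paragraph its complex dilatation lies in $\mathcal{M}_0(\mathbb{U})$, which is exactly the assertion.

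I do not expect a serious obstacle here, since the proposition is essentially a bookkeeping corollary of the two results just recalled; the one point requiring a word of care is the logical direction of Lemma \ref{DEU}, namely that it takes as \emph{input} an extension $H$ of $h$ with $\mu_H \in \mathcal{M}_0(\mathbb{U})$ — which Proposition \ref{SSext} supplies — and returns information about $e^{-1}(h^{-1})$, rather than about $e(h)$ directly. This is precisely why the present proposition is phrased in terms of $e^{-1}(h^{-1})$ and not $e(h)$, and it is also why the genuinely new content of the paper, Theorem \ref{DEonR} (the statement for $e(h)$ itself under the extra hypothesis $h \in {\rm SS}_{\rm uc}(\mathbb{R})$), must wait for Theorem \ref{biLipschitz}: one writes $e(h) = e^{-1}(h^{-1}) \circ \bigl(e^{-1}(h^{-1})^{-1} \circ e(h)\bigr)$ or an analogous composition and applies the chain rule, but that argument belongs to the proof of Theorem \ref{DEonR}, not here. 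For the present proposition, combining Proposition \ref{SSext} with Lemma \ref{DEU} and the elementary stability of bi-Lipschitz quasiconformal maps under inversion completes the proof.
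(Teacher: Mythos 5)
Your proof is correct and is exactly the paper's argument: combine Proposition \ref{SSext} (existence of some quasiconformal extension of $h$ with Beltrami coefficient in $\mathcal{M}_0(\mathbb{U})$) with Lemma \ref{DEU} to conclude $\mu_{e^{-1}(h^{-1})} \in \mathcal{M}_0(\mathbb{U})$, the bi-Lipschitz and boundary-extension properties of $e^{-1}(h^{-1})$ being inherited from the barycentric extension of $h^{-1}$. Your closing remarks about how Theorem \ref{DEonR} is later derived are tangential but do not affect the argument.
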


\begin{remark}
On the unit disk $\mathbb D$, the barycentric extension $e(\varphi)$ satisfies 
$\mu_{e(\varphi)} \in {\mathcal M}(\mathbb{D})$ for $\varphi \in {\rm SQS}(\mathbb S)$ and 
$\mu_{e(\varphi)} \in {\mathcal M}_0(\mathbb{D})$ for $\varphi \in {\rm SS}(\mathbb S)$. 
These facts follow from the above arguments combined with Proposition \ref{SQSext} (applied to $\mathbb D$) 
and Proposition \ref{SW4.1},
which originally appeared in \cite{CZ} and \cite{TWS}.
\end{remark}

\begin{proof}[Proof of Theorem \ref{DEonR}]
Since $h \in \rm SS(\mathbb R)$ and $h^{-1}$ is uniformly continuous on $\mathbb R$, we conclude by Corollary \ref{3item} that $h^{-1} \in \rm SS(\mathbb R)$. 
Then by Proposition \ref{step}, we have $\mu_{e^{-1}(h)} \in \mathcal{M}_0 (\mathbb U)$. Since 
$e^{-1}(h)$ is bi-Lipschitz on $\mathbb U$ and
$h$ is uniformly continuous on $\mathbb R$, Theorem \ref{biLipschitz} implies that $\mu_{e(h)} \in \mathcal{M}_0 (\mathbb U)$. This completes the proof. 
\end{proof}

The complex-variable proof of Theorem \ref{thm2} is also instructive. We give the argument by using 
the results on quasiconformal extensions in Theorem \ref{biLipschitz}, Proposition \ref{SSext}, and Proposition \ref{step}.

\begin{proof}[Another proof of Theorem \ref{thm2}]
For $g, h \in \rm SS(\mathbb R)$, we set $G=e^{-1}(g^{-1})$ and $H=e^{-1}(h^{-1})$ (only for $H$, this particular
construction is necessary). By Proposition \ref{step}, we have $\mu_G,\, \mu_H \in \mathcal{M}_0 (\mathbb U)$. 
Since $H=e^{-1}(h^{-1})$ is bi-Lipschitz with respect to the hyperbolic metric on 
$\mathbb U$ and $h^{-1}$ is uniformly continuous on $\mathbb R$ by assumption,  
we conclude by Theorem \ref{biLipschitz} that $\mu_{G \circ H^{-1}} \in \mathcal{M}_0 (\mathbb U)$.
Then, the boundary extension $g \circ h^{-1}$ of $G \circ H^{-1}$ belongs to ${\rm SS}(\mathbb R)$ by Proposition \ref{SSext}.
\end{proof}

Finally, we mention the action of the group ${\rm SS}_{\rm uc}(\mathbb R)$ on $T_v(\mathbb R)$. 
For each $h \in {\rm SS}_{\rm uc}(\mathbb R)$, this can be simply defined by
$h_*([g])=[g \circ h^{-1}]$ for $[g] \in T_v(\mathbb R)$, but
to see that this gives a biholomorphic automorphism of $T_v(\mathbb R)$, we have to extend $h$
quasiconformally to $\mathbb U$ and consider its action on ${\mathcal M}_0(\mathbb U)$.

\begin{theorem}\label{action}
For any $h \in {\rm SS}_{\rm uc}(\mathbb R)$, let $H=e(h)$.
For any $\mu \in {\mathcal M}_0(\mathbb U)$,
let $G$ be a quasiconformal homeomorphism of $\mathbb U$ onto itself whose complex dilatation is $\mu$.
Then, the correspondence $\mu \mapsto \mu_{G \circ H^{-1}}$ defines a biholomorphic automorphism
$r_H:{\mathcal M}_0(\mathbb U) \to {\mathcal M}_0(\mathbb U)$.
Moreover, this map descends down to a biholomorphic automorphism $R_h:T_v(\mathbb R) \to T_v(\mathbb R)$ 
that coincides with $h_*$ and satisfies $\pi \circ r_H=R_h \circ \pi$ for the Teichm\"uller projection 
$\pi:{\mathcal M}_0(\mathbb U) \to T_v(\mathbb R)$. 
\end{theorem}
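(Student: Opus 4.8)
The plan is to build $r_H$ from the classical right translation of Beltrami coefficients and then check, using Theorem \ref{DEonR} and Theorem \ref{biLipschitz}, that it restricts to a biholomorphic self-map of the finer space $\mathcal M_0(\mathbb U)$, before descending to $T_v(\mathbb R)$. First I would verify that $r_H$ is well defined on $\mathcal M_0(\mathbb U)$. Since a quasiconformal homeomorphism is determined by its complex dilatation up to post-composition with conformal self-maps of $\mathbb U$, and such post-composition does not change $\mu_{G\circ H^{-1}}$, the coefficient $\mu_{G\circ H^{-1}}$ depends only on $\mu=\mu_G$; thus $\mu\mapsto\mu_{G\circ H^{-1}}$ is a well-defined map on $M(\mathbb U)$. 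That it sends $\mathcal M_0(\mathbb U)$ into itself is exactly statement $(2)$ of Theorem \ref{biLipschitz} applied to $G$ and $H=e(h)$: the barycentric extension $H$ is bi-Lipschitz for the hyperbolic metric, $\mu_H=\mu_{e(h)}\in\mathcal M_0(\mathbb U)$ by Theorem \ref{DEonR}, and the boundary extension $h^{-1}$ of $H^{-1}$ is uniformly continuous because $h\in{\rm SS}_{\rm uc}(\mathbb R)$.

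Next I would exhibit the inverse of $r_H$. The map $H^{-1}=e(h)^{-1}$ is again bi-Lipschitz, its boundary extension $h$ is uniformly continuous, and $\mu_{H^{-1}}\in\mathcal M_0(\mathbb U)$ --- the last point following from Theorem \ref{biLipschitz}$(2)$ applied with ${\rm id}$ in place of $G$ and $H=e(h)$. Hence the same recipe with $H$ replaced by $H^{-1}$ defines a self-map $r_{H^{-1}}$ of $\mathcal M_0(\mathbb U)$ sending $\mu_{\widetilde G}$ to $\mu_{\widetilde G\circ H}$, and, using once more that complex dilatation determines a quasiconformal map up to conformal self-maps of $\mathbb U$, one checks directly that $r_{H^{-1}}\circ r_H={\rm id}$ and $r_H\circ r_{H^{-1}}={\rm id}$. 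So $r_H$ is a bijection of $\mathcal M_0(\mathbb U)$.

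For holomorphy I would invoke the pointwise formula
\[
\mu_{G\circ H^{-1}}\circ H=\frac{H_z}{\overline{H_z}}\cdot\frac{\mu_G-\mu_H}{1-\overline{\mu_H}\mu_G},
\]
which shows $\mu_G\mapsto\mu_{G\circ H^{-1}}$ is the restriction of the classical right translation $R_H$, a biholomorphic automorphism of the Banach ball $M(\mathbb U)$ (the right-hand side is a pointwise M\"obius transformation in $\mu_G$ with fixed coefficients, precomposed with the fixed diffeomorphism $H$). Since we have already shown $R_H(\mathcal M_0(\mathbb U))\subset\mathcal M_0(\mathbb U)$, and the Carleson-box bound in the proof of Theorem \ref{biLipschitz} is locally uniform in $\mu$ (so that $r_H$ is locally bounded in the $\mathcal L_0$-norm) while the inclusion $\mathcal L_0(\mathbb U)\hookrightarrow L^\infty(\mathbb U)$ is bounded, the standard argument yields that $r_H:\mathcal M_0(\mathbb U)\to\mathcal M_0(\mathbb U)$ is holomorphic; the same applies to $r_{H^{-1}}$, so $r_H$ is biholomorphic. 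See \cite{Sh19,WM20,TWS} for this type of reasoning.

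Finally I would pass to the quotient. If $\pi(\mu)=\pi(\mu')$, then representatives $G,G'$ of the corresponding quasiconformal maps (normalized to fix $\infty$) agree on $\mathbb R$ modulo ${\rm Aff}(\mathbb R)$, hence $G\circ H^{-1}$ and $G'\circ H^{-1}$ agree on $\mathbb R$ modulo ${\rm Aff}(\mathbb R)$; so $\pi(r_H(\mu))=\pi(r_H(\mu'))$, and the common boundary value is $g\circ h^{-1}\in{\rm SS}(\mathbb R)$ by Theorem \ref{thm2}. Therefore $R_h:T_v(\mathbb R)\to T_v(\mathbb R)$, $[g]\mapsto[g\circ h^{-1}]$, is well defined, coincides with $h_*$, and satisfies $\pi\circ r_H=R_h\circ\pi$. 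As $\pi$ is a holomorphic split submersion for the complex structure of $T_v(\mathbb R)$ (\cite{Sh19,WM20}) and $r_H$ is holomorphic, $R_h$ is holomorphic; its inverse is $R_{h^{-1}}$, which is legitimate and holomorphic since $h^{-1}\in{\rm SS}_{\rm uc}(\mathbb R)$ by Corollary \ref{3item}. Hence $R_h$ is a biholomorphic automorphism. The main technical obstacle, just as in the compact case, is upgrading the holomorphy of $r_H$ from the $L^\infty$-topology to the finer $\mathcal L_0$-topology: local boundedness there is precisely the quantitative content of Theorem \ref{biLipschitz}, and once it is in hand the remaining verifications are routine.
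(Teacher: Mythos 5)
Your proposal is correct and follows essentially the same route as the paper: well-definedness and invariance of $\mathcal M_0(\mathbb U)$ via Theorem \ref{DEonR} together with Theorem \ref{biLipschitz}(2), local boundedness of $r_H$ in the Carleson norm from the quantitative estimate $\Vert\lambda_{\mu_{G\circ H^{-1}}}\Vert_c\lesssim\Vert\lambda_{\mu_G}\Vert_c+\Vert\lambda_{\mu_H}\Vert_c$ in the proof of Theorem \ref{biLipschitz}, the standard upgrade to holomorphy, the inverse $r_{H^{-1}}$, and descent through the Teichm\"uller projection. The paper's argument is merely a compressed version of what you wrote, with the holomorphy details delegated to the cited references.
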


Indeed, in the proof of Theorem \ref{biLipschitz}, we obtain that
$$
\Vert \lambda_{\mu_{G \circ H^{-1}}} \Vert_c \lesssim \Vert \lambda_{\mu_{G}-\mu_{H}} \Vert_c
\lesssim \Vert \lambda_{\mu_{G}} \Vert_c+\Vert \lambda_{\mu_{H}} \Vert_c,
$$
from which we see that $r_H$ is locally bounded. Then, the remaining arguments for showing holomorphy
are carried out in a standard way.
See \cite[Remark 5.1]{SW} and \cite[Proposition 3.1]{WM21}.
The properties of the inverse maps are clear by $(r_{H})^{-1}=r_{H^{-1}}$ and $(R_h)^{-1}=R_{h^{-1}}$.

\section{Comparisons of $\rm SS (\mathbb{R})$ and $\rm SS(\mathbb{S})$ }
The conformal invariance of strongly quasisymmetric homeomorphisms is well understood. 
However, this is not the case for strongly symmetric homeomorphisms.
The problem comes from the uniformity of vanishing quantities related to VMO and vanishing Carleson measures.
In this section, we will clarify the relationship between strongly symmetric homeomorphisms 
on $\mathbb R$ and those on $\mathbb S$ along with some observations which may be of independent interests. 

We switch the definition of Carleson measure to measuring
the intersection of a disk with $\mathbb U$ or $\mathbb D$ instead of measuring a Carleson box or sector. More precisely,
a positive Borel measure $\lambda$ on $\mathbb D$ (similarly on $\mathbb U$) is a Carleson measure if
$$
\sup_{\Delta(\xi,r)} \frac{\lambda(\Delta(\xi,r) \cap \mathbb D)}{r} <\infty, 
$$
where the supremum is taken over all closed disks $\Delta(\xi,r)$ with center $\xi \in \mathbb S$ and radius $r \in (0,2)$.
A vanishing Carleson measure $\lambda \in \rm CM_0(\mathbb{D})$ is defined by verifying 
a uniform vanishing limit of the above quantity as $r \to 0$. 
These definitions are equivalent to the previous ones.

We begin with considering the correspondence between $\rm CM_0(\mathbb{U})$ and $\rm CM_0(\mathbb{D})$ under the Cayley transformation
$T:\mathbb U \to \mathbb D$ defined by $T(z) = (z - i)/(z + i)$. The following is a basic fact.

\begin{lemma}\label{CM0DU}
If $\lambda \in \rm CM_0(\mathbb{D})$, then the pull-back measure $T^*\lambda$ on $\mathbb{U}$ satisfying
$d(T^*\lambda)= |T'|^{-1} d\lambda\circ T $ belongs to $\rm CM_0(\mathbb{U})$.
\end{lemma}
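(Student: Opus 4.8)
\textbf{Proof proposal for Lemma \ref{CM0DU}.}

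The plan is to reduce the statement to a comparison between the geometry of Carleson disks on $\mathbb{D}$ and their preimages on $\mathbb{U}$ under the Cayley transformation $T$, using the fact that $T$ is a M\"obius transformation (hence conformal) and that the only genuine obstruction to a clean statement is the behavior near the boundary point $\infty \in \partial\mathbb{U}$, which corresponds to $1 \in \mathbb{S}$. First I would fix a closed disk $\Delta(\xi,r)$ with $\xi\in\mathbb{S}$ and $r\in(0,2)$ small, and analyze its preimage $T^{-1}(\Delta(\xi,r))$. Away from a fixed neighborhood of $1\in\mathbb{S}$, the map $T^{-1}$ is bi-Lipschitz with bounded distortion, so $T^{-1}(\Delta(\xi,r))$ is comparable to a Euclidean disk of radius $\asymp r$ centered at a point of $\mathbb{R}$; thus it is contained in a Carleson box (or Carleson disk) of comparable size, and conversely. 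Near $1\in\mathbb{S}$ the radius gets stretched, but the key observation is that $T^{-1}$ still maps a disk $\Delta(\xi,r)$ into a disk centered on $\mathbb{R}\cup\{\infty\}$ whose "size parameter" (the radius of the corresponding disk on the Riemann sphere, or equivalently $r/|T'|$ at the relevant point) is what governs the Carleson condition; this is precisely encoded by the weight $|T'|^{-1}$ appearing in the definition of $T^*\lambda$.

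The key computation is the change-of-variables identity: for any Borel set $E\subset\mathbb{D}$,
\begin{equation*}
(T^*\lambda)(T^{-1}(E)) = \int_{T^{-1}(E)} d(T^*\lambda) = \int_{T^{-1}(E)} |T'|^{-1}\, d(\lambda\circ T) = \int_{E} \frac{|T'(T^{-1}(w))|^{-1}}{1}\, d\lambda(w),
\end{equation*}
wait — more carefully, since $d(T^*\lambda) = |T'|^{-1}d(\lambda\circ T)$ means $(T^*\lambda)(A) = \int_A |T'(z)|^{-1}\,d(\lambda\circ T)(z)$ for $A\subset\mathbb{U}$, and $(\lambda\circ T)(A) = \lambda(T(A))$, I would carefully record that for $A = T^{-1}(\Delta(\xi,r)\cap\mathbb{D})$ one gets
\begin{equation*}
(T^*\lambda)(A) \asymp |T'(z_0)|^{-1}\,\lambda(\Delta(\xi,r)\cap\mathbb{D}),
\end{equation*}
where $z_0$ is any point of $A$, using that $|T'|$ has bounded oscillation on $A$ (because $A$ has bounded hyperbolic diameter, being the $T^{-1}$-image of a set of bounded hyperbolic diameter, and $\log|T'|$ is Lipschitz in the hyperbolic metric for a M\"obius map). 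Then I would verify that $A$ is comparable to the intersection with $\mathbb{U}$ of a Euclidean disk $\Delta(\xi',r')$ centered at $\xi'\in\mathbb{R}\cup\{\infty\}$ with $r' \asymp |T'(z_0)|^{-1}r$, so that
\begin{equation*}
\frac{(T^*\lambda)(\Delta(\xi',r')\cap\mathbb{U})}{r'} \asymp \frac{|T'(z_0)|^{-1}\lambda(\Delta(\xi,r)\cap\mathbb{D})}{|T'(z_0)|^{-1}r} = \frac{\lambda(\Delta(\xi,r)\cap\mathbb{D})}{r}.
\end{equation*}
Since this comparability is given by absolute constants, and since $r'\to 0$ if and only if $r\to 0$ (again uniformly, because the stretching factor $|T'(z_0)|^{-1}$ near any fixed part of the boundary is bounded above and below, and the only escaping case $\xi\to 1$, $\xi'\to\infty$ is handled symmetrically), the uniform vanishing of $\lambda(\Delta(\xi,r)\cap\mathbb{D})/r$ as $r\to 0$ transfers to the uniform vanishing of $(T^*\lambda)(\Delta(\xi',r')\cap\mathbb{U})/r'$ as $r'\to 0$. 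This gives $T^*\lambda\in\mathrm{CM}_0(\mathbb{U})$.

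The main obstacle I anticipate is bookkeeping the uniformity near the distinguished boundary point: one must check that every Carleson disk on $\mathbb{U}$ (including those centered at large $|\xi'|$, i.e. near $\infty$) arises, up to bounded constants, as the $T^{-1}$-image of a Carleson disk on $\mathbb{D}$ near $1$, and that the parameter correspondence $r\leftrightarrow r'$ is a genuine uniform equivalence rather than just a pointwise one. This is where the sphere-metric (chordal) viewpoint is cleanest: $T$ extends to a rotation-type isometry of the Riemann sphere precomposed with dilation, so "disks of chordal radius $\rho$" correspond exactly, and one only has to compare Euclidean radius to chordal radius on each side — a routine but essential estimate. I would present the argument in the half-plane/disk Carleson-box formulation and remark that it is equivalent to the disk formulation introduced just above the lemma, so that no separate treatment of $\infty$ is logically needed beyond this comparison. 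The $\mathrm{CM}(\mathbb{U})\leftrightarrow\mathrm{CM}(\mathbb{D})$ (non-vanishing) version is the conformal invariance of Carleson measures already cited as \cite[Lemma VI.3.3]{Ga}; the vanishing version is obtained by the same argument keeping track of the explicit uniform dependence on the radius.
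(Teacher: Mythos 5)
Your overall strategy is the same as the paper's: a change of variables, the observation that the derivative factor has bounded oscillation on the relevant disks so that it can be pulled out as $|T'(z_0)|^{-1}$, and the cancellation of that factor against the radius ratio $r'/r$. The paper simply runs the computation starting from a Carleson disk $\Delta(x,r)$ on $\mathbb{U}$ and pushes it forward, which makes the uniformity bookkeeping one-directional and painless.

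There is, however, a wrong step in your uniformity argument. You assert that $r'\to 0$ if and only if $r\to 0$ "uniformly, because the stretching factor $|T'(z_0)|^{-1}$ \dots is bounded above and below, and the only escaping case $\xi\to1$, $\xi'\to\infty$ is handled symmetrically." This is false: $|(T^{-1})'(w)|=2/|1-w|^2$ blows up as $w\to1$, so the factor is \emph{not} bounded below away from a neighborhood of $\infty$, the correspondence $r\leftrightarrow r'$ is \emph{not} a uniform equivalence, and the situation is genuinely asymmetric. Indeed, if your "if and only if" were correct it would also prove the converse implication $\mathrm{CM}_0(\mathbb{U})\Rightarrow\mathrm{CM}_0(\mathbb{D})$ for the push-forward, which the paper explicitly disproves in the Remark following Theorem \ref{SR}. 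What saves your proof is that only one direction is needed: given a Carleson disk on $\mathbb{U}$ of radius $r'$, its image lies in a disk about a point of $\mathbb{S}$ of radius $r\leq 2r'$, because $|T'(x)|=2/(x^2+1)\leq 2$ on all of $\overline{\mathbb{U}}$. That one-sided global bound (and not any two-sided comparability of $|T'|$) is what transfers the smallness of $r'$ into the smallness of $r$ uniformly in the center, including near $\infty$. You should replace the symmetry claim by this one-sided estimate; with that correction the argument closes and coincides with the paper's.
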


\begin{proof}
Let $\Delta(x,r)$ denote a closed disk with center $x \in \mathbb R$ and radius $r>0$. Then,
\begin{align*}
&\quad\ \frac{1}{r} \iint_{\Delta(x,r) \cap \mathbb U} d(T^*\lambda)(z)
 = \frac{1}{r} \iint_{\Delta(x,r) \cap \mathbb U} |T'(z)|^{-1} d\lambda\circ T(z)\\
&=\frac{1}{{\rm rad}(T(\Delta(x,r)))} \iint_{T(\Delta(x,r)) \cap \mathbb D} 
\frac{{\rm rad}(T(\Delta(x,r)))}{{\rm rad}(\Delta(x,r))}|(T^{-1})'(w)| d\lambda(w).
\end{align*}
Here, ${\rm rad}(T(\Delta(x,r)))|(T^{-1})'(w)|/{\rm rad}(\Delta(x,r))$ is uniformly bounded by
some constant $C>0$ on $T(\Delta(x,r))$ 
for all sufficiently small $r>0$.

Since $\lambda \in \rm CM_0(\mathbb{D})$, 
for every $\varepsilon>0$, there is some $\delta>0$ such that if ${\rm rad}(T(\Delta(x,r)))<2\delta$ then
$$
\frac{C}{{\rm rad}(T(\Delta(x,r)))}\iint_{T(\Delta(x,r)) \cap \mathbb D} d \lambda(w) <\varepsilon.
$$
Moreover, ${\rm rad}(T(\Delta(x,r))) \leq 2\, {\rm rad}(\Delta(x,r))$ by $|T'(x)| \leq 2$ for all $x \in \mathbb R$.
Hence, if $r<\delta$ then
$$
\frac{1}{r} \iint_{\Delta(x,r) \cap \mathbb U} d(T^*\lambda)(z) <\varepsilon
$$
for all $x \in \mathbb R$. Therefore, we have $T^*\lambda\in \rm CM_0(\mathbb{U})$.
\end{proof}

The following is the main result in this section. This also compares $T_v$ with $T_v(\mathbb R)$.

\begin{theorem}\label{SR}
If $\varphi \in \rm SS(\mathbb{S})$, then $h = T^{-1}\circ \varphi \circ T \in  \rm SS(\mathbb{R})$.
The converse does not necessarily hold. In other words, there exists $h \in \rm SS (\mathbb{R})$ such that 
$\varphi = T \circ h \circ T^{-1} \notin \rm SS(\mathbb{S})$.
\end{theorem}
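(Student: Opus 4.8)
The plan is to treat the two assertions separately. For the first (that $\varphi\in\rm SS(\mathbb S)$ implies $h=T^{-1}\circ\varphi\circ T\in\rm SS(\mathbb R)$), the cleanest route is via quasiconformal extensions rather than direct computation with $\log h'$. By Proposition \ref{SW4.1}, $\varphi$ extends to a quasiconformal homeomorphism $\Phi$ of $\mathbb D$ onto itself with $\mu_\Phi\in\mathcal M_0(\mathbb D)$. Conjugating by the Cayley transformation, $H=T^{-1}\circ\Phi\circ T$ is a quasiconformal self-homeomorphism of $\mathbb U$ with boundary extension $h$, and its complex dilatation satisfies $|\mu_H(z)|=|\mu_\Phi(T(z))|$. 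The point is to show $\mu_H\in\mathcal M_0(\mathbb U)$: the measure $|\mu_H(z)|^2\rho_{\mathbb U}(z)\,dxdy$ is the pull-back under $T$ of $|\mu_\Phi(w)|^2\rho_{\mathbb D}(w)\,dudv$ because the hyperbolic area element is conformally invariant, so $d\lambda_{\mu_H}=T^*(\lambda_{\mu_\Phi})$ in the sense of Lemma \ref{CM0DU}; hence $\lambda_{\mu_H}\in\rm CM_0(\mathbb U)$. Then Proposition \ref{SSext} gives $h\in\rm SS(\mathbb R)$. (One should note that the hyperbolic density $\rho$ supplies exactly the Jacobian factor $|T'|^{-1}$ appearing in Lemma \ref{CM0DU} up to the conformal scaling, so the lemma applies verbatim.)

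For the converse, the task is to exhibit an $h\in\rm SS(\mathbb R)$ whose Cayley conjugate $\varphi=T\circ h\circ T^{-1}$ fails to lie in $\rm SS(\mathbb S)$. The natural obstruction is the behavior of $\varphi$ near the single boundary point $T(\infty)=1$: the real line is non-compact, and a VMO condition for $\log h'$ on $\mathbb R$ only forces mean oscillations to vanish as $|I|\to 0$ with no control that is uniform at $\infty$, whereas $\varphi\in\rm SS(\mathbb S)$ requires the mean oscillation of $\log\varphi'$ to vanish uniformly over \emph{all} small arcs, including those clustering at $1$. Concretely, I would build $h$ so that $\log h'$ is a fixed nonzero bounded function (say with BMO-type oscillation bounded below) on a sequence of intervals $[2^n,2^{n+1}]$ marching off to $+\infty$, arranged so that $\log h'\in\rm VMO(\mathbb R)$ (each such interval has length $2^n\to\infty$, so it does not violate the small-interval vanishing condition, and one can interpolate smoothly so the global BMO norm stays finite and local oscillations on short intervals are small). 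Under $T$, the intervals $[2^n,2^{n+1}]$ map to arcs shrinking to $1\in\mathbb S$, and a computation with the chain rule $\log\varphi'=\log h'\circ T^{-1}+\log((T^{-1})'(z)/ (T\circ T^{-1})'\cdots)$ — more precisely $\varphi'=(h'\circ T^{-1})\cdot (T^{-1})'\cdot T'\circ(h\circ T^{-1})$ — shows the oscillation of $\log\varphi'$ on these shrinking arcs does not tend to $0$, so $\log\varphi'\notin\rm VMO(\mathbb S)$ and hence $\varphi\notin\rm SS(\mathbb S)$.

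The main obstacle I anticipate is the explicit construction in the converse: one must simultaneously guarantee (i) $h$ is an increasing homeomorphism of $\mathbb R$, (ii) $h'$ is an $A_\infty$-weight, (iii) $\log h'\in\rm VMO(\mathbb R)$, and (iv) the transplanted function $\log\varphi'$ has non-vanishing oscillation on arcs near $1$. Conditions (ii) and (iii) are in tension with (iv): making the oscillation survive the Cayley transform near $1$ requires $\log h'$ to retain definite oscillation on the far-out intervals $[2^n,2^{n+1}]$, and one must check this is compatible with the VMO condition on $\mathbb R$ — which it is, precisely because those intervals are long, so the VMO limit $|I|\to0$ never "sees" them. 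The delicate bookkeeping is the derivative-of-Cayley factor: since $(T^{-1})'$ blows up and $T'\circ(h\circ T^{-1})$ does too near the relevant points, one must verify these do not conspire to cancel the oscillation of $\log h'\circ T^{-1}$; a careful but routine estimate of $\log|(T^{-1})'|$ and $\log|T'|$ on dyadic blocks, using that these are smooth with controlled variation on each block, should settle this. Alternatively, I would invoke the specific examples promised in the text ("see Section 6 for specific constructions") to produce $h$, which would reduce the proof of the converse to the chain-rule estimate and the comparison of the VMO conditions on $\mathbb R$ versus $\mathbb S$.
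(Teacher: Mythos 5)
Your proof of the first assertion is correct and is essentially the paper's proof: extend $\varphi$ by Proposition \ref{SW4.1}, conjugate by $T$, identify $\lambda_{\mu_H}$ with the pull-back measure $T^*\lambda_{\mu_\Phi}$ of Lemma \ref{CM0DU}, and conclude with Proposition \ref{SSext}. (Your parenthetical about the hyperbolic density is slightly loosely phrased --- the relevant identity is $\rho_{\mathbb U}(z)=\rho_{\mathbb D}(T(z))|T'(z)|$ together with the Jacobian $|T'|^2$ of the change of variables, which combine to give exactly the factor $|T'|^{-1}$ in the lemma --- but the conclusion is right.)

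For the converse, however, your plan has a genuine gap: the entire content of that assertion is the existence of a concrete $h$, and you have only sketched a construction whose two hardest steps are left unverified. First, you must actually produce an increasing homeomorphism $h$ with $h'\in A_\infty$ and $\log h'\in{\rm VMO}(\mathbb R)$ while $\log h'$ retains definite oscillation on dyadic blocks $[2^n,2^{n+1}]$; this is plausible but not routine. Second, and more seriously, the non-cancellation issue you flag is exactly where the difficulty lives: writing $x=T^{-1}(\xi)$ one finds $\log|(T^{-1})'(\xi)|+\log|T'(h(x))|=\log\frac{x^2+1}{h(x)^2+1}$, so the two Cayley factors cancel each other up to a term depending on the growth of $h$ at infinity, and whether the residual oscillation of $\log\varphi'$ on arcs shrinking to $1$ survives requires a genuine estimate that you do not carry out. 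As written, the converse is a program, not a proof.

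The paper avoids all of this with a short indirect argument that you may find instructive. From \cite{WM19} (the explicit $g,h$ recalled in Section 6) one has $g,h\in{\rm SS}(\mathbb R)$ with $g\circ h\notin{\rm SS}(\mathbb R)$. If both $T\circ g\circ T^{-1}$ and $T\circ h\circ T^{-1}$ belonged to ${\rm SS}(\mathbb S)$, then, since ${\rm SS}(\mathbb S)$ \emph{is} a group under composition, $T\circ(g\circ h)\circ T^{-1}\in{\rm SS}(\mathbb S)$, and the already-proved first assertion would force $g\circ h\in{\rm SS}(\mathbb R)$, a contradiction. Hence one of $g,h$ is the desired example. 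This uses no oscillation estimates on $\mathbb S$ at all; the price is that it does not tell you \emph{which} of $g,h$ works, whereas your direct construction, if completed, would. If you want to salvage your approach, the minimal fix is to replace the unfinished construction by this group-theoretic contradiction, or else to supply the dyadic-block estimate for $\log\frac{x^2+1}{h(x)^2+1}+\log h'(x)$ in full.
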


\begin{remark}
We point out that there is another way of constructing a strongly symmetric homeomorphism on $\mathbb R$ from
that on $\mathbb S$. This is done by taking a lift against the universal covering projection $\mathbb R \to \mathbb S$
defined by $x \mapsto e^{ix}$. Namely, for each sense-preserving homeomorphism $\varphi$ 
of $\mathbb S$ onto itself, there exists a strictly increasing homeomorphism $\hat{h}$ of $\mathbb R$ onto itself
that satisfies $\varphi(e^{ix}) = e^{i\hat{h}(x)}$. Then,
$\hat{h}(x + 2\pi) - \hat{h}(x) \equiv 2\pi$ and $\hat{h}'(x) = |\varphi'(e^{ix})|$.
If $\varphi \in \rm SS(\mathbb S)$, we have by Partyka \cite[Lemma 2.2]{Pa} that $\hat h \in {\rm SS}(\mathbb R)$.
\end{remark}

\begin{proof}[Proof of Theorem \ref{SR}]
By Proposition \ref{SW4.1},
any $\varphi \in \rm SS(\mathbb{S})$ extends to a quasiconformal homeomorphism $\Phi$ 
of $\mathbb{D}$ onto itself whose complex dilatation $\mu_{\Phi}$ induces a vanishing Carleson measure 
$\lambda_{\mu_{\Phi}} \in {\rm CM}_0(\mathbb D)$.
Then, $H = T^{-1}\circ \Phi \circ T$ is a quasiconformal extension of $h$ to $\mathbb{U}$ 
whose complex dilatation $\mu_{H}$ induces a measure $\lambda_{\mu_{H}}$ on $\mathbb U$ such that 
\begin{equation*}
    \begin{split}
       d\lambda_{\mu_{H}}(z)  & = |\mu_{H}(z)|^2\rho_{\mathbb{U}}(z)dxdy \\
      & = |\mu_{\Phi}(T(z))|^2\rho_{\mathbb{D}}(T(z))|T'(z)| dxdy\\
      & = |T'(z)|^{-1} d\lambda_{\mu_{\Phi}}\circ T(z) =d(T^*\lambda_{\mu_{\Phi}})(z).\\
    \end{split}
\end{equation*}
It follows from Lemma \ref{CM0DU} that $\lambda_{\mu_{H}} \in \rm CM_0(\mathbb{U})$, and thus 
$h \in \rm SS(\mathbb{R})$ by Proposition \ref{SSext}.

In order to prove the second assertion, we first recall 
 two functions $h$ and $g$ constructed in \cite{WM19}
(the roles of $h$ and $g$ are exchanged here). 

The function $h$ is simply defined as follows:
$$
h(x) = \begin{cases}
(x+1)^2-1, &  x \geq 0\\
-(x-1)^2+1, & x \leq 0.
\end{cases}
$$
To construct the function $g$, we consider a function $g_1(x)=x^2/24$ on the interval $[1,12] \subset \mathbb{R}$.
We draw the graph of $y=g_1(x)$ on the $xy$-plane and its $\pi$-rotating copy around the point $O=(1,g_1(1))$.
The union of these two curves is denoted by $\mathcal G_1$. Its end points are $E=(12, g_1(12))$ and
the antipodal point $E'$ on the copy. 
We move $\mathcal G_1$ by parallel translation so that $E'$ coincides with the origin $(0,0)$ of the $xy$-plane. In the positive direction, we 
put each $\mathcal G_1$ from one to another so that $E'$ coincides with $E$.
The resulting curve that is a graph on $\{x \geq 0\}$ is denoted by $\mathcal G_+$.
We also make its $\pi$-rotating copy around the origin $(0,0)$, which is denoted by $\mathcal G_-$. Then, we set 
$\mathcal G=\mathcal G_+ \cup \mathcal G_-$.
This curve $\mathcal G$ on the $xy$-plane defines a function $y=g(x)$ for $x \in \mathbb{R}$ that has $\mathcal G$ as its graph.

We have shown in \cite[Corollary 5.6]{WM19} that 
$g, h \in \rm SS(\mathbb R)$ but 
$g \circ h \notin \rm SS(\mathbb{R})$. 
Suppose that both $T \circ g \circ T^{-1}$ and $T \circ h \circ T^{-1}$ are in 
$\rm SS(\mathbb{S})$. Since $\rm SS(\mathbb{S})$ is a group,
we have that $T \circ g \circ h \circ T^{-1}$ is in $\rm SS(\mathbb{S})$. Then,  
we conclude by the above argument that $g \circ h \in \rm SS(\mathbb{R})$. 
This is a contradiction, and thus either $T \circ g \circ T^{-1}$ or $T \circ h \circ T^{-1}$ is not in $\rm SS(\mathbb{S})$. 
\end{proof}

\begin{remark}
{\rm
As an immediate consequence of Theorem \ref{SR}, we see that
there exists $\tilde \lambda \in \rm CM_0(\mathbb{U})$ such that the push-forward measure $T_*{\tilde \lambda}$ on $\mathbb D$ satisfying
$d(T_*{\tilde \lambda})=d((T^{-1})^*{\tilde \lambda}) =  |(T^{-1})'|^{-1} d\tilde \lambda \circ T^{-1}$ is not in $\rm CM_0(\mathbb{D})$.
}
\end{remark}

Next, we consider the correspondence between VMO functions on $\mathbb R$ and on $\mathbb S$.
Similar results to Theorem \ref{SR} can be obtained;
the boundedness of $|T'(x)|$ also transforms $\rm VMO(\mathbb{S})$ into $\rm VMO(\mathbb{R})$ under the Cayley transformation $T$.

\begin{proposition}\label{VMO(S)}
If $v \in \rm VMO(\mathbb{S})$, then $u = v \circ T \in \rm VMO(\mathbb{R})$.
The converse does not necessarily hold. In other words, there exists
$u \in \rm VMO(\mathbb{R})$ such that $v = u \circ T^{-1} \notin \rm VMO(\mathbb{S})$.
\end{proposition}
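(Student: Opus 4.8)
The plan is to deduce the forward implication from the density characterizations of VMO already recorded in Section~2, and to produce the counterexample from an explicit bounded uniformly continuous function on $\mathbb{R}$ whose limits at $+\infty$ and $-\infty$ disagree.

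For the forward direction, I would first recall that ${\rm VMO}(\mathbb{S})$ is the closure of $C^\infty(\mathbb{S})$ in the BMO-norm, and that ${\rm VMO}(\mathbb{R})$ is a closed subspace of ${\rm BMO}(\mathbb{R})$. The only external input needed is the conformal invariance of BMO: the Cayley transformation induces a bounded isomorphism ${\rm BMO}(\mathbb{S}) \to {\rm BMO}(\mathbb{R})$ via $v \mapsto v \circ T$, so $u = v \circ T \in {\rm BMO}(\mathbb{R})$ for every $v \in {\rm BMO}(\mathbb{S})$. Next, if $v \in C^\infty(\mathbb{S})$, then $v$ is continuous on the compact set $\mathbb{S} \cong \mathbb{R} \cup \{\infty\}$, so $v \circ T$ is bounded on $\mathbb{R}$ and extends continuously to $\infty$; since the spherical distance on $\mathbb{R} \cup \{\infty\}$ is dominated by $2|x-y|$ (because $|T'(x)| = 2/(1+x^2) \leq 2$), this forces $v \circ T \in {\rm UC}(\mathbb{R}) \cap L^\infty(\mathbb{R}) \subset {\rm BMO}(\mathbb{R}) \cap {\rm UC}(\mathbb{R}) \subset {\rm VMO}(\mathbb{R})$. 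Finally, given $v \in {\rm VMO}(\mathbb{S})$, choose $v_n \in C^\infty(\mathbb{S})$ with $v_n \to v$ in the BMO-norm; then $v_n \circ T \to v \circ T$ in ${\rm BMO}(\mathbb{R})$ by boundedness of the composition operator, each $v_n \circ T$ lies in the closed subspace ${\rm VMO}(\mathbb{R})$, and hence $u = v \circ T \in {\rm VMO}(\mathbb{R})$.

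For the second assertion I would take $u(x) = \arctan x$, which is bounded and $1$-Lipschitz, so that $u \in {\rm UC}(\mathbb{R}) \cap L^\infty(\mathbb{R}) \subset {\rm VMO}(\mathbb{R})$. On the other hand $v = u \circ T^{-1}$ is bounded, hence in ${\rm BMO}(\mathbb{S})$, but it has a jump at $1 = T(\infty)$: the two one-sided limits of $v$ as $w \to 1$ along $\mathbb{S}$ equal $+\pi/2$ and $-\pi/2$, being the limits of $\arctan x$ as $x \to +\infty$ and as $x \to -\infty$. Consequently, for the arc $J_r \subset \mathbb{S}$ of length $2r$ centered at $1$, the function $v$ is uniformly within $o(1)$ of $+\pi/2$ on one half of $J_r$ and within $o(1)$ of $-\pi/2$ on the other half, so $\frac{1}{|J_r|}\int_{J_r}|v - v_{J_r}|\,|dw| \to \pi/2$ as $r \to 0$, which shows $v \notin {\rm VMO}(\mathbb{S})$. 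Any bounded uniformly continuous $u$ on $\mathbb{R}$ with $\lim_{x \to +\infty} u(x) \neq \lim_{x \to -\infty} u(x)$ works equally well.

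There is also a direct route for the forward implication, parallel to Lemma~\ref{CM0DU}: a bounded interval $I \subset \mathbb{R}$ is mapped by $T$ onto an arc $T(I) \subset \mathbb{S} \setminus \{1\}$ with $|T(I)| \leq 2|I|$, and a short computation shows that once $|I|$ is small enough one has $\sup_{T(I)}|(T^{-1})'| \leq C_0 \inf_{T(I)}|(T^{-1})'|$ for an absolute constant $C_0$ --- the point being that an arc long relative to its distance to the point $1$ would force $|I| = \int_{T(I)}|(T^{-1})'|\,|dw|$ to be bounded below; then the change of variables $x = T^{-1}(w)$ gives $\frac{1}{|I|}\int_I |u - u_I|\,dx \leq \frac{2}{|I|}\int_{T(I)}|v - v_{T(I)}|\,|(T^{-1})'|\,|dw| \leq \frac{2C_0}{|T(I)|}\int_{T(I)}|v - v_{T(I)}|\,|dw|$, which tends to $0$ uniformly as $|I| \to 0$. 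Neither argument is deep; the only point requiring care is the bookkeeping of which intervals of $\mathbb{R}$ correspond to which arcs of $\mathbb{S}$, and especially the observation that the troublesome point $1 = T(\infty)$ is harmless for the forward direction, since the $T$-image of a bounded interval never straddles $1$, whereas it is exactly this point that the converse fails to control, because small arcs around $1$ correspond to complements of large bounded intervals of $\mathbb{R}$, on which membership in ${\rm VMO}(\mathbb{R})$ imposes no condition.
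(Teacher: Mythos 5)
Your proposal is correct, and its primary route differs from the paper's in both halves. For the forward implication the paper argues directly: it estimates $\frac{1}{|I|}\int_I|u-u_I|\,dx$ by $\frac{2}{|I|}\int_I|u-v_J|\,dx$ with $J=T(I)$, changes variables, and uses that $|J|\,|(T^{-1})'|/|I|$ is uniformly bounded on $J$ once $|I|$ is small, together with $|J|\le 2|I|$ --- i.e.\ exactly the ``direct route'' you sketch at the end (your justification that small intervals of $\mathbb R$ map to arcs of bounded relative distortion, because an arc long compared with its distance to $1=T(\infty)$ would force $|I|$ to be bounded below, is the correct quantitative content that the paper leaves implicit). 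Your main argument instead is soft: it combines the density of $C^\infty(\mathbb S)$ in ${\rm VMO}(\mathbb S)$ and Sarason's inclusion ${\rm BMO}\cap{\rm UC}\subset{\rm VMO}$ (both recorded in Section~2) with the conformal invariance of BMO under the Cayley transformation. That last ingredient is a classical fact but is an external input the paper does not formally state, and it is genuinely needed in your route both to get $u\in{\rm BMO}(\mathbb R)$ and to pass to the limit $v_n\circ T\to v\circ T$ in BMO-norm; the paper's computation, by contrast, only ever handles small intervals, which is all the VMO condition requires. For the counterexample the paper takes $v(\xi)=\log|1-\xi|$, an unbounded BMO non-VMO function whose pullback $-\log|x+i|+\log 2$ is uniformly continuous, whereas you take the bounded function $u=\arctan$, whose pushforward has a genuine jump at $1=T(\infty)$; your example is more elementary (boundedness plus a jump immediately rules out VMO) and isolates the same mechanism --- small arcs at $1$ correspond to neighborhoods of $\infty$ in $\mathbb R$, where ${\rm VMO}(\mathbb R)$ imposes nothing --- while the paper's example additionally illustrates that the pullback of an unbounded function can still land in ${\rm VMO}(\mathbb R)$. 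Both arguments are complete and correct.
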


\begin{proof}
It is easy to see that
\begin{equation*}
    \frac{1}{|I|}\int_{I}|u(x) - u_I| dx \leq \frac{2}{|I|}\int_{I}|u(x) - c| dx 
\end{equation*}
for any bounded closed interval $I \subset \mathbb{R}$ and any $c \in \mathbb R$.
For $c = v_J$, $J = T(I)$ 
and $x = T^{-1}(\xi)$, the right side term in the above inequality turns out to be
\begin{equation*}
    \frac{2}{|J|}\int_{J}|v(\xi) - v_J| \frac{|J|}{|I|}|(T^{-1})'(\xi)| |d\xi|.
\end{equation*}
Here, $|J||(T^{-1})'(\xi)|/|I|$ is bounded from above by an absolute constant $C>0$ 
for all sufficiently small intervals $I$. 
We see that $|J|\to 0$ as $|I|\to 0$ by $|T'(z)|\leq 2$. Hence, 
\begin{equation*}
    \frac{1}{|I|}\int_{I}|u(x) -u_I| dx \leq \frac{2C}{|J|}\int_{J}|v(\xi) - v_J|  |d\xi| \to 0
\end{equation*}
as $|I| \to 0$ by the condition $v \in \rm VMO(\mathbb{S})$. This implies that $u \in \rm VMO(\mathbb{R})$.

For the converse direction, 
we set $v(\xi)=\log|1-\xi|$. This does not belong to $\rm VMO(\mathbb{S})$.
However, 
$$
u(x)=v \circ T(x) = -\log|x+i|+\log 2
$$ 
belongs to $\rm VMO(\mathbb{R})$ because it is in $\rm BMO(\mathbb{R})$ and is uniformly continuous on $\mathbb R$ (see \cite{Sa}).
\end{proof}

\begin{ackn}
The authors would like to thank the referee for a very careful reading of the manuscript and for several suggestions that greatly improve the presentation of the paper. 
\end{ackn}

\end{document}